\documentclass[11pt]{article}
\usepackage{amsmath} 

\usepackage{amssymb}
\usepackage{amsfonts}
\usepackage{latexsym}
\usepackage{amsthm}
\usepackage{amsxtra}
\usepackage{amscd}
\usepackage{bbm}
\usepackage{mathrsfs}
\usepackage{bm} 
\usepackage{dsfont} 
\usepackage{stmaryrd}
\usepackage{cite}
\usepackage{graphicx, color}
\usepackage{xspace}
\usepackage{authblk}

\setlength{\textwidth}{6.5in}
\setlength{\oddsidemargin}{0in}
\setlength{\evensidemargin}{0in}
\setlength{\topmargin}{-0.1in}
\setlength{\textheight}{8.7in} 
\setlength{\footskip}{0.6in}
\setlength{\headsep}{0in}

\flushbottom 

\usepackage{color}

\oddsidemargin=0in
\evensidemargin=0in
\textwidth=6.5in

\numberwithin{equation}{section}

\newcommand{\pt}{\partial}

\newcommand{\dd}{\mathrm{d}}

\newcommand{\al}{\alpha}
\newcommand{\de}{\delta}
\newcommand{\De}{\Delta}
\newcommand{\e}{{\varepsilon}}
\newcommand{\eps}{\epsilon}
\newcommand{\ga}{{\gamma}}
\newcommand{\Ga}{{\Gamma}}
\newcommand{\la}{\lambda}
\newcommand{\La}{\Lambda}

\newcommand{\om}{{\omega}}

\setlength{\unitlength}{1cm}
\newcommand{\ol}[1]{\overline{#1}} 

\newcommand{\be}{\begin{equation}}
\newcommand{\ee}{\end{equation}}
\newcommand{\ben}{\begin{equation}\nonumber}

\newcommand{\floor}[1] {\lfloor {#1} \rfloor}

\newcommand{\ind}[1]{\mathbbm{1}_{#1}}
\newcommand{\pr}[1]{\left(#1\right)}

\renewcommand{\P}[1]{\mathbb{P}\left[#1\right]}
\newcommand{\E}[1]{\mathbb{E}\left[#1\right]}
\newcommand{\R}{\mathbb{R}}

\newcommand{\C}{\mathbb{C}}
\newcommand{\N}{\mathbb{N}}
\newcommand{\Z}{\mathbb{Z}}

\newcommand{\U}{\mathbb{U}}
\newcommand{\F}{\mathcal{F}}


\newcommand{\absa}[1]{\left\lvert #1 \right\rvert}
\newcommand{\norma}[1]{\left\lVert #1 \right\rVert}
\newcommand{\norm}[1]{\lVert #1 \rVert}


\DeclareMathOperator{\tr}{tr}
\DeclareMathOperator{\var}{Var}

\DeclareMathOperator{\im}{Im}
\DeclareMathOperator{\re}{Re}

\theoremstyle{plain} 
\newtheorem{theorem}{Theorem}[section]
\newtheorem*{theorem*}{Theorem}
\newtheorem{lemma}[theorem]{Lemma}
\newtheorem*{lemma*}{Lemma}
\newtheorem{corollary}[theorem]{Corollary}
\newtheorem*{corollary*}{Corollary}

\newtheorem*{proposition*}{Proposition}
\newtheorem{definition}[theorem]{Definition}
\newtheorem*{definition*}{Definition}

\newtheorem*{example*}{Example}
\theoremstyle{remark} 
\newtheorem{remark}[theorem]{Remark}

\newtheorem*{remark*}{Remark}
\newtheorem*{remarks*}{Remarks}
\newtheorem{notation}[theorem]{Notation}
\newtheorem*{notation*}{Notation}

\newcommand{\T}{{\mathbb T}}
\newcommand{\I}{\mathbb I}
\newcommand{\J}{\mathbb J}
\newcommand{\K}{\mathbb K}
\renewcommand{\S}{\mathbb S}
\newcommand{\M}{\mathcal M}
\newcommand{\Mp}{\mathcal M^+}
\newcommand{\Om}[1]{\mathcal O_{\!\sigma\!,p}\!\left(#1\right)}
\renewcommand{\O}[1]{\mathcal O\left(#1\right)}
\newcommand{\ii}{\mathrm{i}}

\newcommand{\cT}{\mathcal T}
\newcommand{\cK}{\mathcal K}
\newcommand{\tm}{\tilde m}

\newcommand{\bma}{\begin{bmatrix}}
\newcommand{\ema}{\end{bmatrix}}
\newcommand{\wo}[1]{^{(#1)}}
\newcommand{\D}{\mathcal{D}}

\title{Universality of random matrices with correlated entries}

\author{Ziliang Che\thanks{The work of the author is partially supported by the NSF grant DMS-1307444.}}
\affil{Harvard University\\ zche@math.harvard.edu}

\date{}
\begin{document}
\maketitle
\abstract{We consider an $N$ by $N$ real symmetric random matrix $X=(x_{ij})$ where $\E{x_{ij}x_{kl}}=\xi_{ijkl}$.  
Under the assumption that $(\xi_{ijkl})$ is the discretization of a piecewise Lipschitz function and that the correlation is short-ranged we prove that the empirical spectral measure of $X$ converges to a probability measure.
  The Stieltjes transform of the limiting measure can be obtained by solving a functional equation.   Under the slightly stronger assumption that $(x_{ij})$ has a strictly positive definite covariance matrix, we prove a local law for the empirical measure down to the optimal scale $\im z \gtrsim N^{-1}$.
The local law implies delocalization of eigenvectors.  As another consequence we prove that the eigenvalue statistics in the bulk agrees with that of the GOE. }
\section{Introduction}
The Wigner-Dyson-Mehta conjecture asserts that the local eigenvalue statistics of large random matrices are universal in the sense that they depend only on the symmetry class of the model - real symmetric or complex Hermitian - but are otherwise independent of the underlying details of the model, such as the distribution of the individual matrix entries.  In particular they agree with the case that the entries are real or complex iid Gaussians - the so-called Gaussian Orthogonal and Unitary Ensembles (GOE/GUE) - for which there are explicit formulas.  The past decade has seen spectacular progress in the study of local statistics of random matrix ensembles.  In a series of works \cite{Erdos2009b, Erdos2012a, Erdos2010universality, Erdos2012b, Erdos2013a, Erdos2012c} the Wigner-Dyson-Mehta conjecture was established for Wigner ensembles which consist of random matrices with independent entries of identical variance.  Parallel results were obtained independently in various cases in \cite{Tao2011a, Tao2010}.  

The Wigner-Dyson-Mehta conjecture extends beyond the class of Wigner ensembles.  In fact the results of \cite{Erdos2009b, Erdos2012a, Erdos2010universality, Erdos2012b, Erdos2013a, Erdos2012c} also apply to generalized Wigner ensembles in which the variances are allowed to vary but are assumed to be of the same order, and the matrix of variances is assumed to be stochastic.  In the work  \cite{Ajanki2015} the authors consider matrix ensembles of general Wigner-type in which the stochasticity condition on the variances is dropped.  In \cite{ziliang} the authors consider the adjacency matrix of a sparse random graph model whose degree distribution satisfies a power law.  For such models the global statistics no longer follow the semicircle law, however the universality of the local statistics is unchanged.   Universality has been proved for a class of random band matrices \cite{Yau16}, deformed Wigner ensembles \cite{Kevin1,Kevin2} and the adjacency matrices of sparse random graphs \cite{Erdos2012a, Landon2015, ziliang}.

The study of these models has relied heavily on the independence of the matrix entries and the local statistics of matrices with a general correlation structure have not been considered.  Existing work on correlated random matrices has been restricted to models that have a specific correlation structure which could be exploited.  The universality of the adjacency matrices of  random regular graphs was obtained in \cite{Bau15,Huang15}.  Universality for Gaussian matrices with a translation invariant correlation structure was studied in \cite{Alt2015,Erdos2015}.  The local law was obtained for certain additive models in \cite{Bao15a,Bao15b}.  Universality was obtained for sparse random graph Laplacians in \cite{hl} and for polynomials of certain Gaussian matrices in \cite{Guionnet16,Guionnet15}.  Apart from results on the local scale, the convergence of empirical measure on the global scale was obtained for models with translation invariant correlation (see e.g. \cite{Banna13,Peligrad2016,Pastu94,Boutet99,Boutet96}) and for a model with piecewise translation invariant correlation \cite{Anderson2008}.  In all cases the analysis relied on the special structure of the matrix ensemble.

In this article we extend the Wigner-Dyson-Mehta conjecture to real symmetric random matrices with a general correlation structure, $\E{x_{ij}x_{kl} }= \xi_{ijkl}$.  Our assumptions are that $\xi$ is the discretization of piecewise Lipschitz function, and that the correlation is short-ranged, i.e., that the $(i, j)$th and $(k, l)$th entry are independent if either $|i-k| $ or $|j-l| >K$ where $K$ is fixed.  Under these conditions we obtain a global law for the empirical eigenvalue density.  Under the additional hypothesis that $\xi$ is strictly positive definite we obtain a local law as well as universality - that the local statistics coincide with the GOE in the limit $N \to \infty$.  

In addition to proving universality for Wigner matrices, the works of Erd\H{o}s-Yau et.al, \cite{Erdos2009b, Erdos2012a, Erdos2010universality, Erdos2012b, Erdos2013a, Erdos2012c} established a robust framework for proving universality for general matrix models.  This approach consists of a three-step strategy:
\begin{enumerate}
\item Obtain a local law, or high probability estimate on the empirical eigenvalue density at short scales.
\item Analyze the convergence of Dyson Brownian motion to local equilibrium.
\item A perturbation argument proving that the statistics remain unchanged by the Dyson Brownian motion flow.
\end{enumerate}

The general strategy of proving the local law consists of analyzing the Green's function
\ben
G(z) := \frac{1}{ H-z}
\ee
together with its normalized trace $m_N(z) = N^{-1} \tr (G)$.  The strategy developed in \cite{Erdos2012b,wegner,short,complete} for proving the local law for Wigner matrices consists of two key ingredients.  The first is a concentration estimate which implies that $m_N(z)$ satisfies an approximate fixed point equation with high probability:
\be\label{eqn:approxfixed}
m_N (z) = F ( m_N (z) ) + o(1).
\ee
The second ingredient is that the fixed point equation $m=F(m)$ is {\it stable}, i.e., that an approximate solution is in fact close to the solution.  In the case of the semicircle law and Wigner matrices the stability of the above scalar equation is trivial.  In order to prove the local law for generalized Wigner matrices one adopts a similar strategy but in this case shows that the vector $v:=( G_{ii} (z) )_i$ satisfies an approximate vector fixed point equation.  In this case the matrix of variances is stochastic and so the solution is in fact a constant vector.

In order to study the general Wigner type matrices\cite{Ajanki2015}, the Gaussian matrix with translation-invariant correlation \cite{Erdos2015} and the sparse random graph ensembles of \cite{ziliang} one shows, as in the generalized Wigner case, that the vector $v=(G_{ii} (z) )_i$ satisfies a vector fixed point equation
\be\label{eq:qve}
v_i (z) = \frac{1}{ - z - \sum_{ j} S_{ij} v_j(z) } + o(1).
\ee
When the matrix of variances $S$ is not stochastic the solution to the above fixed point equation is not in general a constant vector.  This type of equations is often crucial in identifying the limiting eigenvalue distributions of random matrices. One of the key contributions of \cite{Ajanki2015a}, \cite{ziliang} is to show that the above equation is stable in the bulk of the limiting spectrum, which is needed for proving the local law.

An important element in previous works on the local law is the independence of the matrix entries.  In particular, the $i$th row and column are independent of the $i$th minor; this key fact allows one to establish the approximate fixed point equation, and in the model considered here the loss of independence presents a serious challenge in this step of the proof.  We take advantage of the short-range nature of the correlation and  find that the entire matrix of Green's function elements $G = (G_{ij} )_{ij}$ satisfies a fixed point equation
\be \label{eqn:matfixed}
G = F (G) + o (1)
\ee
where now $F$ is a function on the space of $N \times N$ matrices.  In particular it is no longer sufficient to control only the trace $m_N$ or the vector of diagonal entries $(G_{ii})_i$, as the off-diagonal entries $G_{ij}$ are not necessarily small.   This generalizes the equations considered in, e.g. \cite{Ajanki2015a}, \cite{ziliang}.  The assumption that $\xi$ is a discretization of a Lipschitz function $\psi$ allows us to construct a limiting version of the equation \eqref{eqn:matfixed} on an auxilliary function space and establish stability for the finite $N$ equation.  A similar equation was derived in \cite{Pastur11}, where the authors considered $H_0+W$ where $H_0$ is a given symmetric matrix and $W$ is a Gaussian matrix with correlation between the matrix entries.  In \cite{Girko}, similar equations were considered for random matrices with certain block structures. The equation was studied in \cite{Helton2007} in a more general setting, where unique solvability was proved for an operator-valued self-consistent equation by applying the Earle-Hamilton fixed point theorem to a subdomain of a $C^*$-algebra.  We use the same argument to show the unique solvability and stability of the matrix equation \eqref{eqn:matfixed} for fixed $N$ and $z\in\C^+$.
	
In the limit $N\to \infty$, we obtain a limiting equation in a functional space $\cT:=L^\infty([0,1),\cK)$ where $\cK$ is the space of convolution operators on $l^2(\Z)$: \be\label{eqn:limit}
	m(\theta)=(-z-\Psi(m)(\theta))^{-1}\,,\forall \theta\in[0,1).\ee Here $\Psi$ is an integral operator and the inverse is taken in the space $\cK$.  The limiting equation \eqref{eqn:limit}, after Fourier transform, is a quadratic vector equation similar to \eqref{eq:qve}.  In \cite{Ajanki2015a}, it is proved that this type of quadratic vector equations are stable in the bulk of spectrum, which is needed in our proof of the local law.  The proof of stability in the bulk relies on the Krein-Rutman theorem on positive integral operators.  We then show that one can approximate the solution of the finite $N$ equation \eqref{eqn:matfixed} by the solution of \eqref{eqn:limit} with an $\O{N^{-1}}$ error. This approximation scheme enables us to prove the stability of \eqref{eqn:matfixed} in the bulk.

As mentioned above, the remainder of the strategy developed in  \cite{Erdos2009b, Erdos2012a, Erdos2010universality, Erdos2012b, Erdos2013a, Erdos2012c} to prove universality of consists of analyzing the DBM flow starting from our correlated matrix ensemble and showing that the statistics are unchanged under the DBM flow. 

We modify the DBM flow in order to preserve the correlation structure of our model.  We then apply the results of \cite{Landon2015a} to show that the statistics agree with the GOE after a short time.  In the perturbation step we rely on the argument of \cite{Bourgade} which shows that the statistics are unchanged.

The main new ideas of this article are as follows. 1) In the case of matrices with independent entries, the Schur complement formula is a useful tool for "decoupling'' a row and column of the matrix from its minor, but is no longer effective in the correlated case.  We replace the Schur complement formula with a simple method that provides such a decoupling in the case of finite range correlations. 
2) The off-diagonal entries of $G$ are possibly order of $1$, which does not happen for Wigner matrices and causes  serious trouble in the correlated case.  We solve this by finding a precise estimate for the off-diagonal entries based on the properties of solution of \eqref{eqn:matfixed}.  3) We develop a scheme to approximate the continuum solution of \eqref{eqn:limit} by the matrix solution of \eqref{eqn:matfixed}, which enables us to identify the limit of the Stieltjes transform $\tfrac{1}{N}\tr G$ and prove stability in the bulk.

In this article we focus on real symmetric matrices. The main results are easily generalized to complex Hermitian matrices.  Parallel results can be obtained for sample covariance matrices using the same approach, as this case can be reduced to analyzing the eigenvalues of Hermitian matrices. The assumption that the correlation is finite-ranged can be relaxed to, e.g. assuming that the correlation between the matrix entries decays exponentially fast (plus some more assumptions on weak dependence of distant entries).  However, this extension is technical and is not included in this article. 
 
We outline the rest of the article. In Section \ref{sec: definition} we define the model and lay out the assumptions, introduce the self-consistent equations, then state the main results.    In Section \ref{sec: derive sce}, we show that the Green's function satisfies the self-consistent equation up to an error term.   In Section \ref{sec: sce} we solve the self-consistent equation and prove that it converges to a limit in a certain sense; we also show that the self-consistent equation is stable under small perturbation.  Section \ref{sec: law} is devoted to proving two of the main results,  the global law and the local law of the Stieltjes transform of empirical measure.  In Section \ref{sec: bulk universality}, we prove the universality of local statistics of eigenvalues using the local law and a result in \cite{Landon2015a}.
\section{Definition and main results}\label{sec: definition}
\subsection{Definition of the model}
For each $N\in\N$, we consider an array of centered real random variables $(x_{ij})_{1\leq i\leq j\leq N}$.  We assume that  there is a four dimensional tensor $\xi=\xi\wo{N}$ such that
\be\label{def: xi}
	\E{ x_{ij}x_{kl}}=\xi_{ijkl}\,.
\ee
 We assume that the $(x_{ij})$ are $K$-dependent for some constant $K>0$ in the following sense:
\begin{definition}
A sequence of random variables  $(a_i)$ is $K$-dependent if $a_i$ is independent of $(a_j)_{\absa{j-i}>K}$.  A family of random variables $(a_{ij})$ is $K$-dependent if $a_{ij}$ is independent of $(a_{kl})_{\absa{i-k}\vee\absa{j-l}>K}$.
\end{definition}

In this article, a constant only depending on $K$ and $(\mu_p)$ (defined below) is regarded as a universal constant and we will omit the dependence.  We would like the model to include the adjacency matrices of random sparse graphs, in which there are roughly $N^\tau$ edges connected to each of $N$ vertices.  We  therefore introduce a sparsity parameter $q=N^\tau$ for some fixed $\tau\in(0,1]$.  We assume that there is a sequence of constants $(\mu_p)_{p\in\N}$ such that  $(x_{ij})$ satisfies the bounds
\be\label{eq: moments}
	\sup_{i,j}\E{\absa{x_{ij}}^p}\leq (N/q)^{p/2-1}\mu_p^p\,.
\ee
Without loss of generality assume $\mu_2=1$ so that $\sup_{i,j}\var{x_{ij}}\leq 1$. Note that when $\tau<1$, the $p$-th moments of $x_{ij}$ ($p>2$) are going to infinity as $N\to\infty$.

Now  consider a symmetric matrix $X$  whose upper-triangular part is $(x_{ij})_{1\leq i\leq j\leq N}$.   Since we are interested in the asymptotic behavior of the spectrum of $X$ as $N\to\infty$, we normalize $X$ by $N^{-1/2}$:
\ben
	H=\frac{1}{\sqrt{N}}X\,,
\ee
so that $\norma{H}$ is roughly of order 1.  We are going to analyze the Green's function $G(z)$ given by
\ben
	G(z)=(H-z)^{-1}\,, z\in\C^+=\{\zeta\in\C:\im\zeta>0\}\,.
\ee
Throughout this article, we always denote the imaginary part of $z$ by $\eta$.  The empirical measure $\mu_N:=\tfrac{1}{N} \sum_{i}\de_{\lambda_i(H)}$ of $H$  satisfies
\ben
	\int_\R \frac{\dd\mu_N( x)}{x-z}= \tfrac{1}{N}\tr G(z)\,.
\ee
 As $N\to \infty$, the limit of the empirical measure, if it exists, is in general not the semicircle law unless the correlation between matrix entries are rather weak.  
 
 In order to get a meaningful limit, one needs some mild  assumptions on $\xi$.  Let $\psi: [0,1)^2\times\Z^2\to\R$ be piecewise Lipschitz in the sense that there is a partition of $[0,1)$ into finitely many disjoint intervals
\be\label{def: partition}
	[0,1)=\cup_{\alpha \in\mathcal A} I_\alpha\,,
\ee such that $\psi(\cdot,\cdot,k,l)$ is Liptchitz on $I_\alpha\times I_{\alpha'}$ for any $\alpha,\alpha'\in\mathcal A$  and $k,l$.  Assume that $\xi$ is the discretization of $\psi$, i.e., for $i\leq j$, $k\leq l$,
 \ben
 	\xi\wo{N}_{ijkl}=\psi(i/N,j/N,k-i,l-j)+\O{N^{-1}}\,,
\ee
unless $(i/N,j/N)$ is in the $K/N$-neighborhood of a discontinuous point of $\psi(\cdot,\cdot,k-i,l-j)$.  This generalizes the model in \cite{Anderson2008} where $\psi$ is a step function on $[0,1)^2\times\Z^2$.  In order to be compatible with the symmetric structure of $X$, we assume $\psi(\theta,\phi,k,l)=\psi(\theta,\phi,-k,-l)=\psi(\phi,\theta,l,k)$ for all $(\theta,\phi,k,l)$.

Finally, we state a condition that will be assumed in some but not all of the main results.  In each main result, we will specify whether the condition is assumed or not.
\begin{definition}\label{def: pd}
	Let $\Sigma\wo{N}$ be the covariance matrix of the family $(x_{ij})_{1\leq i\leq j\leq N}$.  The tensor $\xi$ is said to be \textbf{positive definite} with lower bound $c_0>0$ if $\Sigma\wo{N}\geq c_0$ for all $N$.
\end{definition}
When the condition is assumed, any constant that depends on on $c_0$ will be seen as a universal constant and we will omit the dependence. Here we remark that the condition is very mild, since if $(w_{ij})$ is a family of i.i.d. random variables with mean $0$ and a small variance $\e>0$, then the $\xi$ of $(x_{ij}+w_{ij})$ will be positive definite with lower bound $\e$.

\begin{remark} The assumption that $K$ is fixed can be relaxed to e.g. $K=(\log N)^{\log \log N}$ with little technical difficulty. We refrain from doing so in order to make the argument transparent to the reader.
\end{remark}
\begin{remark}
In this article we focus on real symmetric matrices. However, as mentioned in the introduction, the argument can be easily generalized to the complex case.  
\end{remark}

\subsection{Self-consistent equations}\label{subsec: sce}
Before stating the main results, we would like to introduce the equations that the Green's function satisfies. As $\xi_{ijkl}$ has only been defined for $i\leq j$, $k\leq l$, we will extend it to all $(i,j,k,l)\in\N^4$.  For technical reasons we extend it in such a way: for $i>j$ or $k>l$ define 
\ben
\xi_{ijkl} = \begin{cases} \xi_{jilk} &\mbox{if } i>j\,, k>l\\ 
0& \mbox{otherwise} \end{cases}.
\ee
Note that now \eqref{def: xi} is \textbf{not} always true for each $(i,j,k,l)$.   To compensate for this, let $w$ be a random variable uniformly distributed on $[0,1)$ and define 
\be\label{def: hat x}
	\hat x_{ij} = x_{ij} e^{\ii 2\pi w} \text{ if } i\leq j\,; \hat x_{ij}=x_{ij},\text{ otherwise. }
\ee
One can easily see that $\xi_{ijkl}=\E{\hat x_{ij}^*\hat x_{kl}}$ holds for all $(i,j,k,l)\in\N^4$.  For each $N$ define a matrix-valued map $\Xi:\C^{N\times N} \to \C^{N\times N}$ through
\be\label{def: Xi}
	(\Xi(M))_{ik}:=\tfrac{1}{N}\sum_{j,l}\xi_{ijkl}M_{jl} \,,\forall i,k\in\N_N\,.
\ee
For each  $N\in\N$ and $z\in\C^+$ we consider the equation
\be\label{eq: sce n}
	M(-z-\Xi(M))=I\,.
\ee
We will show that the Green's function $G$ is approximately the solution of this equation when $N$ is large. In Section \ref{sec: sce} we show that the equation is uniquely solvable in a certain class of matrices, by a fixed point argument.

Naturally, one expects $M$ has some sort of limit when $N\to\infty$.  In order to identify the limit, we consider the space 
\be\label{def: cT}
	\cT=L^\infty([0,1),\cK)
\ee
where $\cK$ is the space of bi-infinite sequences that are viewed as convolution operators on $l^2(\Z)$, i.e., \[\cK=\{(a(k))_{k\in\Z}: \norma{a*f}_{l^2(\Z)}<\infty\,,\text{ for all } f\in l^2(\Z)\}.\]  An element in $\cT$ can be regarded either as a function from $[0,1)\times \Z\to\C$ or a function from $[0,1)\to \cK$.  To clarify notations, for any $f\in\cT$ and  each $\zeta\in [0,1)$, we denote $f(\zeta):=(f(\zeta,k))_{k\in\Z}$ as a member in $\cK$.  Let $\check f$ denote the inverse Fourier transform in the $k$ variable, i.e., $\check f(\theta,\zeta):=\sum_k f(\theta,k)e^{\ii 2\pi k\zeta}$. It is well known that the norm of $a=(a(k))_{k\in\Z}\in \cK$ satisfies $\norma{a}=\norma{\check a}_\infty$.  The norm on $\cT$ therefore satisfies
\be
	\norma{f}_\cT = \norma{\check f}_\infty\,.
\ee
The limiting version of trace is defined as follows:
\begin{definition}\label{def: tr infty}
For any $f\in\cT$, $\tr f :=\int_0^1 f(\theta, 0)\dd \theta$.
\end{definition}
The limiting version of the map $\Xi$ is an operator $\Psi$ defined through
\be\label{def: Psi}
	\Psi(f)(\theta,k)= \iint \psi(\theta,\phi,k,l)f(\phi,l)\dd \theta\dd l\,.
\ee
Here $\dd l$ denotes the counting measure on $\Z$. Define the inverse $f^{-1}$ of $f$ on $\cT$ through 
\be\label{def: inverse T}
	f^{-1}(\theta)=(f(\theta))^{-1}\,,\forall \theta\in[0,1)\,.
\ee
Here the second inverse is taken in the space $\cK$.  We regard $z \in \mathbb{C}$ as an element of $ f_z \in \cT$ given by $f_z ( \theta, k) = z \de_{k0}$. 
Now we are ready to write down the self-consistent equation on $\cT$:
\be\label{eq: sce infty}
	m=(-z-\Psi(m))^{-1}\,.
\ee
This equation is uniquely solvable in a certain subdomain of $\cT$.  It turns out that if $m$ is the solution to the equation above, $\tr m(z)$ (see Definition \ref{def: tr infty}) is the Stieltjes transform of a probability measure on $\R$, i.e., there is a probability measure $\mu$ on $\R$ such that 
\ben
	\tr m(z) =\int \frac{\mu(\dd x)}{x-z}\,, z\in\C^+.
\ee
If $\xi$ is positive definite in the sense of Definition \ref{def: pd}, then $\mu$ has a continuous density 
\be\label{def: rho}
	\mu(\dd x)=\rho(x)\dd x\,.
\ee
In Section \ref{sec: sce} we will see that under inverse Fourier transform, equation \eqref{eq: sce infty} becomes a quadratic vector equation as studied in \cite{Ajanki2015a,Ajanki2015b}, where the behavior of the limit density $\rho$ is described in detail.

\subsection{Main results}
Our first main theorem concerns the unique solvability of equation \eqref{eq: sce n} and equation \eqref{eq: sce infty}. The theorem follows from Theorem \ref{thm: solvable}, Theorem \ref{thm: solvable infty} and Theorem \ref{thm: converge local}.  Apart from unique solvability, the equations are also stable under small perturbations, but we will state the stability results in Section \ref{sec: sce}.  Recall the definition \eqref{def: cT} of the space $\cT$.  The operator norm of a matrix $A$ is denoted by $\norm{A}$.
\begin{theorem}
For any $N\in\N$ and $z\in\C^+$, the self-consistent equation \eqref{eq: sce n} has a unique solution $M=M(N,z)$ in the set $\{A\in\C^{N\times N}: \tfrac{1}{2\ii}(A-A^*)>0\}$.  For any $z\in\C^+$, equation \eqref{eq: sce infty} has a unique solution $m=m(z)$ in the set $\{f\in \cT: \inf_{(\theta,s)\in[0,1)^2} \check f(\theta,s)>0\}$.  

Let $\hat M$ be the discretization of $m$ defined through $\hat M_{i,i+k}:= m(i/N,k)$, then
\ben
	\norm{M-F(\hat M)}\leq c_z/N\,.
\ee
Moreover, assume that $\xi$ is positive definite in the sense of Definition \ref{def: pd}, let $\D\subset \C^+$ be a bounded domain such that $\im \tr m$ is bounded below, then there is a $c_\D$ such that 
\ben
	\norm{M-F(\hat M)}\leq c_\D/N\,,
\ee
uniformly for all $z\in\D$.
\end{theorem}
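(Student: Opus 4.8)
The statement collects four assertions --- unique solvability of \eqref{eq: sce n}, unique solvability of \eqref{eq: sce infty}, the fixed-$z$ approximation bound, and its uniform version in the bulk ---, which together are the content of Theorems~\ref{thm: solvable}, \ref{thm: solvable infty} and \ref{thm: converge local}; I treat them in turn. Write \eqref{eq: sce n} as the fixed-point equation $M=F(M)$ with $F(M):=(-z-\Xi(M))^{-1}$. The structural input is that $\Xi$ is a positive linear map: after the phase rotation \eqref{def: hat x} one has an identity of the form $\Xi(M)=\tfrac1N\E{\overline{\hat X}\,M\,\hat X^{\top}}$ with $\hat X=(\hat x_{ij})$, so $\Xi$ sends Hermitian matrices to Hermitian matrices, positive semidefinite ones to positive semidefinite ones, and obeys $\im\Xi(M)=\Xi(\im M)$ (writing $\im A:=\tfrac1{2\ii}(A-A^{*})$). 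Hence for $\im M>0$ the matrix $B:=-z-\Xi(M)$ satisfies $\im B=-\eta I-\Xi(\im M)\le-\eta I$, so it is invertible with $\norm{B^{-1}}\le\eta^{-1}$ and $\im F(M)=-B^{-1}(\im B)(B^{-1})^{*}\ge(\eta/R^{2})I>0$ where $R:=|z|+2\eta^{-1}\norm{\Xi}\ge\norm{B}$; thus $F$ maps the cone $\{A:\im A>0\}$ into itself, and since any fixed point in this cone satisfies $\norm{M}=\norm{B^{-1}}\le\eta^{-1}$, it is enough to work on the bounded subdomain $\mathbb D_\eta:=\{A:\im A>0,\ \norm{A}<2\eta^{-1}\}$. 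On $\mathbb D_\eta$ the map $F$ is holomorphic, and the bounds above put $F(\mathbb D_\eta)$ inside the compact set $\{\norm{A}\le\eta^{-1},\ \im A\ge(\eta/R^{2})I\}$, which has positive distance to $\partial\mathbb D_\eta$. The Earle--Hamilton theorem, applied exactly as in \cite{Helton2007}, then produces a unique fixed point of $F$ in $\mathbb D_\eta$, hence in $\{A:\im A>0\}$, and, since it realizes $F$ as a strict contraction of $\mathbb D_\eta$ in the Kobayashi metric, it also yields the stability of \eqref{eq: sce n} under perturbations.

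Equation \eqref{eq: sce infty} is solved identically on the commutative von Neumann algebra $\cT$, identifying $\cK$ with the Fourier multipliers $L^\infty(\T)$ so that $\cT\cong L^\infty([0,1)\times\T)$: the operator $\Psi$ is bounded, linear, and positive (inheriting positivity from the covariance structure of $\psi$), so $f\mapsto(-z-\Psi(f))^{-1}$ maps the cone $\mathbb D_\infty:=\{f\in\cT:\inf_{(\theta,s)}\im\check f(\theta,s)>0\}$ into a bounded subdomain of itself, and Earle--Hamilton gives the unique solution $m=m(z)$. For the comparison of $\hat M$ with $M$ I would use two facts. First, $K$-dependence forces $\psi(\theta,\phi,k,l)=0$ for $|k|\vee|l|>K$, so for each $\theta$ the operator $-z-\Psi(m)(\theta)$ is banded on $\ell^2(\Z)$ with bounded inverse $m(\theta)$, and therefore its entries decay exponentially, $|m(\theta,k)|\le Ce^{-c|k|}$ with $c,C$ depending on $z$; moreover $\theta\mapsto m(\theta)$ is piecewise Lipschitz, as one sees by differentiating $m=(-z-\Psi(m))^{-1}$ and using the piecewise Lipschitz continuity of $\psi$. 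Second, $\Xi$ is the Riemann-sum discretization of $\Psi$: inserting $\xi_{ijkl}=\psi(i/N,j/N,k-i,l-j)+\O{N^{-1}}$ into \eqref{def: Xi} and using $m(\theta)(-z-\Psi(m)(\theta))=I_{\cK}$ for every $\theta$, a direct computation gives $\hat M\,(-z-\Xi(\hat M))=I+E_N$ with $\norm{E_N}\le c_z/N$; here $E_N$ collects the $\O{N^{-1}}$ discrepancy between $\xi$ and sampled $\psi$, the Riemann-sum error in the $\phi$-integral of $\Psi$, and the error from replacing the argument $j/N$ by $i/N$ inside the convolution over $j$, and the exponential decay of $m(\theta,\cdot)$ --- through the convergence of $\sum_p|p|e^{-c|p|}$ --- yields the clean $N^{-1}$ rate, while the $O(K)$ rows adjacent to the partition points \eqref{def: partition} are absorbed by the same localization. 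Multiplying on the right by $(-z-\Xi(\hat M))^{-1}$ gives $\hat M-F(\hat M)=E_N\,(-z-\Xi(\hat M))^{-1}$; using the pseudodifferential-type estimate $\norm{\hat M}\le\norm{m}_{\cT}+\O{N^{-1}}$ together with the fact that $\hat M$ is an approximate fixed point to place $\hat M$ within $\O{N^{-1}}$ of $\mathbb D_\eta$, the Earle--Hamilton stability gives $\norm{M-\hat M}\le c_z/N$, whence $\norm{M-F(\hat M)}\le\norm{M-\hat M}+\norm{\hat M-F(\hat M)}\le c_z/N$.

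For the uniform bound over a bounded domain $\D$ on which $\im\tr m\ge\de_\D>0$, everything reduces to checking that positive-definiteness makes all of the above constants uniform. The hypothesis $\Sigma\wo{N}\ge c_0$ passes to the limit as the lower bound $\Psi(f)\ge c_0(\tr f)\,I$ for $f\ge0$ (split off the part of the covariance corresponding to independent entries). Hence on $\D$ one has $\im(-z-\Psi(m)(\theta))=-\eta I-\Psi(\im m)(\theta)\le-c_0(\im\tr m)\,I\le-c_0\de_\D I$, which upgrades $\norm{m(\theta)}\le\eta^{-1}$ to the $z$-independent bound $\norm{m(\theta)}\le C_\D$, forces the distance from $0$ to the spectrum of $-z-\Psi(m)(\theta)$ to be at least $c_0\de_\D$, and hence makes the exponential decay rate $c$ and prefactor $C$ in $|m(\theta,k)|\le Ce^{-c|k|}$ uniform over $\D$. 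Transferring this to the finite-$N$ equation is a short bootstrap: the fixed-$z$ bound just proved gives $\tfrac1N\tr M=\tr m+\O{N^{-1}}$, so $\im\tfrac1N\tr M\ge\de_\D/2$ once $N$ is large, which forces $\norm{M}\le C_\D$ and the uniform exponential decay of the entries of $M$; rerunning the Riemann-sum and inversion estimates with these uniform constants yields $\norm{M-F(\hat M)}\le c_\D/N$ throughout $\D$.

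The main obstacle, I expect, is the approximation step, and within it the interplay of three points: making the exponential off-diagonal decay of the continuum and finite-$N$ solutions quantitative with constants that survive as $\D$ approaches the edge of the bulk --- which is precisely why the hypothesis $\im\tr m\gtrsim1$, equivalently positivity of the limiting density $\rho$, is imposed; controlling the finite-section discrepancy between inverting the $N\times N$ near-Toeplitz matrix $-z-\Xi(\hat M)$ and inverting the bi-infinite operators $-z-\Psi(m)(\theta)$, which requires care near the endpoints $i\approx 1,N$ and near the finitely many partition points of \eqref{def: partition} where $\psi$ ceases to be Lipschitz; and the pseudodifferential-type estimates (operator norm, and approximate positivity of the slowly varying ``locally Toeplitz'' matrix $\hat M$) needed to locate $\hat M$ in the region where the Earle--Hamilton stability applies. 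All three are handled by exponential localization, but extracting the claimed $\O{N^{-1}}$ rate uniformly in $\D$ is where the genuine work lies.
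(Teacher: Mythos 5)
Your treatment of the two unique-solvability claims is correct and coincides with the paper's proof (Lemma \ref{lem: closed} plus the Earle--Hamilton argument of \cite{Helton2007}, applied to $\Mp_N$ and to $\cT_+$ respectively), so I only comment on the approximation claims, where your route has a genuine gap.

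You discretize $m$ into $\hat M$, assert $\hat M(-z-\Xi(\hat M))=I+E_N$ with $\norm{E_N}\le c_z/N$ in \emph{operator norm}, and feed this into the finite-$N$ Earle--Hamilton stability to conclude $\norm{M-\hat M}\le c_z/N$. The operator-norm bound on $E_N$ is false. There are two sources of entries of size $O(1)$ rather than $\O{N^{-1}}$: (i) for rows $i$ within $O(1)$ of $N\theta_0$, $\theta_0$ a partition point of \eqref{def: partition}, replacing $(-z-\Psi(m))(l/N,\cdot)$ by $(-z-\Psi(m))(i/N,\cdot)$ inside the convolution costs $O(1)$ because $m$ and $\Psi(m)$ jump across $\theta_0$; exponential localization only makes this error decay like $e^{-c\absa{i-N\theta_0}}$, it does not make it $\O{N^{-1}}$ at the partition point itself; (ii) for rows within $O(1)$ of the edges $i\approx 1,N$, the finite section $\sum_{l=1}^{N}$ truncates the bi-infinite identity $m(i/N)\ast(-z-\Psi(m))(i/N)=\de_0$ with an $O(1)$ error. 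A perturbation with an $O(1)$ entry in a single row already has operator norm of order $1$, so Theorem \ref{thm: stable} then yields only $\norm{M-\hat M}=\O{1}$. Note also that $\norm{M-\hat M}\le c_z/N$, which your route would establish, is stronger than the claim and is most likely false: $M$ and the raw sample $\hat M$ genuinely differ by $O(1)$ in entries adjacent to the partition points, which is precisely why the theorem is phrased for $F(\hat M)$ --- the map $F=(-z-\Xi(\cdot))^{-1}$ averages over entire rows and washes out these localized discrepancies.

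The paper's proof (Theorems \ref{thm: converge} and \ref{thm: converge local}) runs in the opposite direction: it embeds $M$ into $\cT$ via $\tm(\theta,k):=M_{\floor{N\theta},\floor{N\theta}+k}$, proves only an $L^1$-in-$\theta$ bound $\int\norm{\tm(\theta)-\F(\tm)(\theta)}\,\dd\theta\le c_z/N$ --- which tolerates $O(1)$ errors on the measure-$\O{N^{-1}}$ neighborhoods of the partition points and edges --- and then uses the mollifying property of $\Psi$ and $\Xi$ (Lemma \ref{lem: mollify}: bounded averaging kernels with a $1/N$ prefactor) to upgrade the $L^1$ bound to the uniform bound required by the stability of the \emph{continuum} equation; the conclusion about $\norm{M-F(\hat M)}$ is extracted at the end by applying $\Xi$ once more, never by bounding $M-\hat M$ itself. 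For the uniform statement on $\D$, the same embedding is combined with the bulk stability Theorem \ref{thm: stable bulk infty} (Krein--Rutman) and a continuity-in-$z$ bootstrap from large $\im z$; your proposal to invoke the fixed-$z$ bound at each $z\in\D$ to get $\im\tfrac1N\tr M\ge\de_\D/2$ does not work when $\D$ reaches down to small $\im z$, since $c_z$ blows up there. To repair your argument you would have to either modify $\hat M$ near the partition points and edges, or measure $E_N$ in a norm insensitive to $O(1)$ entries on finitely many rows and prove stability in that norm --- which is in effect what the paper's $L^1$-plus-mollification scheme accomplishes.
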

\begin{remark}
The assumption that $\im \tr m$ is bounded below on $\D$ might look a bit odd.  However, this is satisfied if $\rho(\re z)$ is bounded below for $z\in\D$, since $\rho(\re z)= \lim_{\im z\to 0^+}\tfrac{1}{\pi} \im \tr m(z)$.

\end{remark}

Before stating the limiting laws of $\tfrac{1}{N}\tr G$, we introduce a deterministic control parameter that will frequently appear throughout the article.
\begin{definition}
\ben
	\Phi=\Phi(N,z)=\frac{1}{\sqrt{N\eta}}+\frac{1}{\sqrt{q}}\,.
\ee
\end{definition}
Now we state the global law of $\tfrac{1}{N}\tr G$ which  says that with very high probability, $\tfrac{1}{N}\tr G(z)$ converges to $\tr m(z)$ uniformly in compact subsets of $\C^+$.  Moreover, each entry of $G$ is well approximated by the corresponding entry of the deterministic matrix $M$ which solves equation \eqref{eq: sce n}.  Note that $M$ is a $N\times N$ deterministic matrix depending on $N$ and $z$.  
\begin{theorem}[The global law]\label{thm: global law}
Let $\D\subset \subset\C^+$.  Assume that $M=M(N,z)$ solves equation \eqref{eq: sce n} and $m$ solve equation \eqref{eq: sce infty}.  Then for arbitrary $\nu>0$, and $p$ large enough, the following estimates hold when $N\geq N_{\D,\nu,p}$.
\ben
	\P{ \sup_{i,j\in\N_N\,,z\in\D}{\absa{G_{ij}-M_{ij}}}\geq N^\nu \Phi } \leq N^{-\nu p}\,.
\ee
\ben
	\P{\sup_{z\in\D}\absa{\tfrac{1}{N}\tr G- \tr m }\geq N^\nu \Phi}\leq N^{-\nu p}\,.
\ee
\end{theorem}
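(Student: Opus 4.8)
The plan rests on three ingredients. The first is a high-probability \emph{approximate} form of the self-consistent equation \eqref{eq: sce n} for the random Green's function $G$: for every $\nu>0$ and every $p$, once $N$ is large,
\[
  \P{\;\sup_{z\in\D}\norm{\,G\,(-z-\Xi(G))-I\,}\;\geq\;N^{\nu}\Phi\;}\;\leq\;N^{-\nu p}.
\]
The second is the unique solvability and \emph{stability} of \eqref{eq: sce n} near its solution $M$, from Section~\ref{sec: sce}: any $A$ in the cone $\{A:\tfrac{1}{2\ii}(A-A^*)>0\}$ solving \eqref{eq: sce n} up to an error $D$ with $\norm{D}$ below a $z$-dependent threshold $\de_0(z)$ obeys $\norm{A-M}\le c_z\norm{D}$. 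The third is a net-plus-continuity argument making the first two uniform over $\D$. Since $\D\subset\subset\C^+$ the imaginary part $\eta$ is bounded below on $\D$, so $\norm{G},\norm{M}\le\eta^{-1}=\O{1}$, the quantities $c_z$ and $\de_0(z)^{-1}$ are bounded on $\D$, and $\Phi\asymp N^{-1/2}+q^{-1/2}\to0$; this is what makes the global regime lighter than the local one treated later.

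I expect the first ingredient to be the main obstacle. Starting from the identity $(HG)_{ik}=\de_{ik}+z\,G_{ik}$ with $(HG)_{ik}=N^{-1/2}\sum_j x_{ij}G_{jk}$, I would exploit the $K$-dependence through a finite-range decoupling that plays the role of the (now unavailable) Schur-complement identity: for each $i$, a suitable $\O{K}$-sized block of rows and columns around $i$ is separated from the complementary minor, on which it is independent, so that its contribution can be peeled off by a short resolvent expansion. Taking the conditional expectation over that block, and using $\E{\hat x_{ij}^*\hat x_{kl}}=\xi_{ijkl}$, reproduces the $\Xi(G)$ term together with a fluctuation that is a sum of conditionally centered linear and quadratic forms in $K$-dependent variables. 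Decomposing each such sum into $\O{1}$ subsums of independent terms and invoking the standard large-deviation bound for quadratic forms — the growing moments $(\mu_p)$ being absorbed through the sparsity parameter $q$, which is where the $q^{-1/2}$ in $\Phi$ originates — controls the fluctuation entrywise by $N^{\nu}\Phi$. The genuinely new difficulty is that $G_{jk}$ need not be small off the diagonal: I would first extract, from the decay of $M_{ij}$ in $\absa{i-j}$ supplied by the structure of the solution of \eqref{eq: sce n}, a matching a-priori decay for $G_{jk}$, and use it to see that $G\,(-z-\Xi(G))-I$ has rapidly decaying entries, so that a Schur-test bound upgrades the entrywise estimate to the displayed operator-norm bound.

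Granting these, once $N$ is large the error $N^{\nu}\Phi$ lies below $\de_0(z)$ uniformly on $\D$, so the stability of \eqref{eq: sce n} gives $\norm{G-M}\le c_zN^{\nu}\Phi$ on the good event; if the stability statement of Section~\ref{sec: sce} is only local, one first places $G$ in the basin of $M$ by a routine continuity argument in $\C^+$, anchored where $\im z$ is large — there $\norm{G},\norm{M}$ are trivially tiny — and carried out using that the bound is self-improving. Since $\absa{(G-M)_{ij}}\le\norm{G-M}$, this gives the first estimate pointwise in $z$; since $\tfrac1N\absa{\tr(G-M)}\le\norm{G-M}$ and $\tfrac1N\tr M=\tr m+\O{1/N}$ — a Riemann-sum comparison using the Lipschitz property of $\psi$ together with the bound $\norm{M-F(\hat M)}\le c_z/N$ of Section~\ref{sec: sce} — the trace estimate follows pointwise as well.

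Finally, $G$, $M$ and $m$ are Lipschitz on $\D$ with constant $\O{1}$, so fixing an $N^{-3}$-net of $\D$, applying the pointwise estimates at the $\O{N^6}$ net points, and extending to all $z\in\D$ by Lipschitz continuity — the interpolation error $\O{N^{-3}}$ being negligible against $N^{\nu}\Phi\gtrsim N^{\nu-1/2}$ — yields the two uniform estimates; running the whole argument with $\nu/2$ in place of $\nu$ and with $p$ increased by $\O{1}$, and taking $N\ge N_{\D,\nu,p}$, absorbs the stability constant $c_z$ and the union-bound loss and produces exactly the bounds claimed.
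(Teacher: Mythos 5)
Your architecture coincides with the paper's: the proof is exactly (i) the entrywise approximate self-consistent equation $G(-z-\Xi(G))=I+R$ with $\norm{R}_\infty=\O{N^{2\sigma}\Phi}$ from Lemma \ref{lem: sce app} (derived via the finite-range decoupling and the $K$-dependent large-deviation bounds, with $\Ga,\ga=\O{\eta^{-1}+\absa{z}}$ being trivial on $\D\subset\subset\C^+$ by Lemma \ref{lem: gG}); (ii) stability of \eqref{eq: sce n} to conclude $\norm{G-M}_\infty\le c_z\norm{R}_\infty$; (iii) the comparison $\tfrac1N\tr M=\tr m+\O{c_z/N}$ of Theorem \ref{thm: converge}; and (iv) a net-plus-Lipschitz argument for uniformity in $z$.

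The one place where your plan deviates is the passage from the entrywise bound on the error to something the stability theorem can use. You propose to first establish off-diagonal decay of $G$ itself (matching that of $M$) and then upgrade $\norm{\cdot}_\infty$ to $\norm{\cdot}$ by a Schur test; as written this is circular, since a priori decay of $G_{ij}$ in $\absa{i-j}$ is essentially the conclusion $G\approx M$ you are trying to prove. The paper sidesteps this entirely with Theorem \ref{thm: stable 1}: because $\Xi$, and hence $F$, depends only on the entries of its argument within the band $\absa{i-j}\le K$, one may replace $M'=G$ outside the band by $F(M')$ without changing $F(M')$, which converts the error into a banded matrix $\hat R$ with $\norm{\hat R}\le(2K+1)\norm{R}_\infty$; the operator-norm stability of Theorem \ref{thm: stable} then applies directly, and the exponential decay of $M$ (Theorem \ref{thm: decay}) is used only on the deterministic side. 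If you substitute this band-truncation observation for your Schur-test step, the rest of your argument — including the continuity anchoring at large $\im z$ to enter the basin of stability, and the $N^{-3}$-net — goes through as in the paper.
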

\begin{remark}
This theorem can be applied to the empirical measure of singular values $\{\mu_k\}$ of a non-symmetric random matrix $X$ under similar assumptions, since one can symmetrize $X$ by defining 
\ben
	Y=\bma 0 &X^*\\X&0\ema\,,
\ee
whose nontrivial eigenvalues are $\{\pm\mu_k\}$.
\end{remark} 

The local law of $\tfrac{1}{N}\tr G$ says that if $\re z$ is in the bulk of the limit density,  then with very high probability, $\tfrac{1}{N}\tr G(z)$ converges to $\tr m(z)$.  Moreover, each entry of $G$ is well approximated by the corresponding entry of the deterministic matrix $M$ which solves equation \eqref{eq: sce n}, with error roughly of the size $\Phi$.  
\begin{theorem}[The local law]\label{thm: local law}
Assume that $\xi$ is positive definite in the sense of Definition \ref{def: pd}.   Assume that $M=M(N,z)$ solves equation \eqref{eq: sce n} and $m$ solves equation \eqref{eq: sce infty}.  Fix a bounded domain $\D\subset \C^+$ such that $\rho(\re(z))$ (see \eqref{def: rho}) is bounded below by $\om>0$ for any $z\in\D$. For any $\nu\in(0,1]$ define $\D_\nu\wo{N}:=\{z=E+\ii \eta \in \D: \eta>N^{-1+\nu}\}$.
Then for $\sigma$ small enough and $p$ large enough, the following estimates hold for all $N\geq N_{\om,\sigma,p}$
\ben
	\P{ \sup_{i,j\in\N_N\,,z\in\D_\nu}{\absa{G_{ij}-M_{ij}}}\geq N^\sigma \Phi } \leq N^{-\sigma p}\,.
\ee
\ben
	\P{\sup_{z\in\D_\nu}\absa{\tfrac{1}{N}\tr G- \tr m }\geq N^\sigma \Phi}\leq N^{-\sigma p}\,.
\ee
\end{theorem}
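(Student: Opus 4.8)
The plan is to run the standard continuity/bootstrap argument for the local law, but with the matrix self-consistent equation \eqref{eq: sce n} playing the role of the scalar or vector equation, and with the stability of \eqref{eq: sce n} in the bulk (which follows from the stability of the limiting equation \eqref{eq: sce infty}, itself a consequence of the results on quadratic vector equations from \cite{Ajanki2015a,Ajanki2015b}) as the analytic input. First, I would establish the \emph{self-consistent equation with an error term}: using the finite-range decoupling described in the introduction (the replacement of the Schur complement formula), one shows that with very high probability the Green's function matrix $G = G(z)$ satisfies
\be\label{eqn:sceerr}
	G\bigl(-z-\Xi(G)\bigr) = I + \mathcal E
\ee
where $\|\mathcal E\|$ (in an appropriate norm, e.g. the one controlling individual entries together with the trace) is bounded by $N^{\sigma}(\Phi + \Phi_\infty)$ or similar, with $\Phi_\infty$ the a priori size of $\max_{ij}|G_{ij} - M_{ij}|$ fed in by the bootstrap. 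The derivation of \eqref{eqn:sceerr} is where the $K$-dependence is used: one expands $G_{ij}$ by resolvent identities removing the $O(K)$-sized block of rows and columns around index $i$, uses that this block is independent of the corresponding minor, and applies concentration (large deviation bounds for $K$-dependent sums, using the moment bounds \eqref{eq: moments}) to replace quadratic forms by their expectations, which reassembles into $\Xi(G)$.

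Second, I would invoke the stability of \eqref{eq: sce n} in the bulk: since $\rho(\re z)$ is bounded below by $\om$ on $\D$, the relevant stability operator $I - M\,\Xi(\cdot)\,M$ (linearization of \eqref{eq: sce n} at the solution $M$) is invertible with norm bounded by a constant depending only on $\om$ and universal parameters — this is exactly the content of the stability statements promised for Section \ref{sec: sce}, transported from the continuum equation \eqref{eq: sce infty} to the finite-$N$ equation via the $\O{N^{-1}}$ approximation $\|M - F(\hat M)\| \le c_\D/N$. Combining with \eqref{eqn:sceerr} and a rough a priori bound $\|G - M\| = o(1)$ (valid for $\eta \gtrsim 1$ directly, then propagated down), one gets the \emph{self-improving estimate}: on the event that $\max_{ij}|G_{ij} - M_{ij}| \le N^\sigma \Phi_0$ with $\Phi_0$ between $\Phi$ and a constant, one in fact has $\max_{ij}|G_{ij} - M_{ij}| \lesssim N^\sigma(\Phi + \Phi_0^2)$ with very high probability. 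A standard gap/continuity argument in $\eta$ (Lipschitz continuity of both $G$ and $M$ in $z$, union bound over an $N^{-C}$-net of $\D_\nu$, starting the induction at $\eta \sim 1$) then closes the bootstrap and yields $\max_{ij}|G_{ij} - M_{ij}| \le N^\sigma \Phi$ with probability $\ge 1 - N^{-\sigma p}$; the trace estimate follows either from averaging the entrywise bound or — to gain the extra averaging — from a fluctuation-averaging argument applied to $\tfrac1N\tr G - \tr m$, though since the target error is just $N^\sigma\Phi$ the crude averaging of the entrywise bound already suffices.

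The main obstacle, and the genuinely new difficulty relative to the Wigner case, is controlling the off-diagonal entries $G_{ij}$, which are $O(1)$ rather than $O(\Phi)$. This means the error analysis in \eqref{eqn:sceerr} cannot treat $G$ as "close to diagonal"; instead one must carry the full matrix $M_{ij}$ (which also has $O(1)$ off-diagonal entries with exponential-type decay in $|i-j|$ inherited from the decay of $\psi(\theta,\phi,k,l)$ in $k,l$) as the reference object throughout, and every resolvent expansion and concentration estimate must be done relative to $M$, tracking how errors propagate through products of $O(1)$-sized quantities. Quantitatively this requires a norm on $N\times N$ matrices strong enough to control the stability operator's inverse yet compatible with the entrywise large-deviation estimates — my guess is one works with $\max_{ij}(1+|i-j|)^{10}|G_{ij}-M_{ij}|$ or an $\ell^\infty$-to-$\ell^\infty$-type operator norm weighted by the decay profile — and checking that \eqref{eqn:sceerr} holds in this norm, with the quadratic error term $\Phi_0^2$ genuinely quadratic and not merely $O(\Phi_0)$, is the crux. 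The positive-definiteness hypothesis enters precisely here and in the stability step: it guarantees $\rho$ is a bona fide continuous density bounded below, so that the bulk assumption $\rho(\re z) \ge \om$ is meaningful and the Krein–Rutman-based stability argument applies.
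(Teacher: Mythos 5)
Your proposal follows essentially the same route as the paper: derive $G(-z-\Xi(G))=I+{}$error via finite-range decoupling and $K$-dependent concentration, invoke stability of the finite-$N$ equation in the bulk (transported from the continuum equation \eqref{eq: sce infty} via the $\O{N^{-1}}$ embedding and the Krein--Rutman argument), and close with a continuity/bootstrap argument starting from large $\eta$ and the global law. The only implementation differences are minor: the paper works with the plain entrywise sup-norm rather than a decay-weighted norm (the band structure of $\Xi$ together with the Demko--Moss--Smith exponential decay of $M$ makes the weighting unnecessary), and it runs the bootstrap as a gap argument with the stochastic control parameters $\Ga$ and $\ga$ --- the latter bounding inverses of the removed $O(K)$-blocks $G^{(\J)}_{\I,\I}$, which is the second place (besides stability) where positive definiteness enters --- rather than as an iterative quadratic self-improvement.
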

Delocalization of eigenvectors says that the eigenvectors corresponding to bulk eigenvalues are flat.  Proving the corollary from Theorem \ref{thm: local law} can be done by a routine argument (see e.g. \cite{Erdos2013a}).
\begin{corollary}\label{cor: regidity}[Delocalization of eigenvectors]
Assume that $\xi$ is positive definite in the sense of Definition \ref{def: pd}.   Assume that $M$ solves equation \eqref{eq: sce n} and $m$ solve equation \eqref{eq: sce infty}.  Let $\ga_k:= \inf \{\ga: \int_{-\infty}^\ga \rho (x)\dd x=k/N\}$ be the $k$-th classical location of the limiting density, and $u_k=(u_k(i))_i$ be the eigenvector associated with $\la_k$. Let $\om>0$ be a fixed number and $\sigma$ be an arbitrarily small number, then 
\ben
	 \sup_{k:\rho(\ga_k)\geq \om\,, i\in\N} \absa{u_k(i)} ^2\leq N^{-1+\sigma}
\ee
hold with probability $1-N^{-\sigma p}$ for $p$ large enough and $N\geq N_{\om,\sigma,p}$.
\end{corollary}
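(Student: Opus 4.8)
The plan is to run the by-now-standard deduction of eigenvector delocalization from a local law (cf. \cite{Erdos2013a}). Since $H$ is real symmetric with orthonormal eigenvectors $u_k$, one has the spectral decomposition $G_{ii}(z)=\sum_{k}\absa{u_k(i)}^2/(\la_k-z)$, so for $z=E+\ii\eta\in\C^+$,
\ben
\im G_{ii}(E+\ii\eta)=\sum_{k=1}^N\frac{\eta\,\absa{u_k(i)}^2}{(\la_k-E)^2+\eta^2}\,.
\ee
Fix an index $k$, set $\eta:=N^{-1+\sigma}$ (with $\sigma$ a small parameter to be matched to the statement at the end, after at most halving it), and take $E=\la_k$. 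Every summand is nonnegative and the $k$-th one equals $\absa{u_k(i)}^2/\eta$, so
\ben
\absa{u_k(i)}^2\leq\eta\,\im G_{ii}(\la_k+\ii\eta)\,.
\ee
It therefore suffices to prove that, with probability at least $1-N^{-\sigma p}$, one has $\im G_{ii}(\la_k+\ii\eta)=\O{N^{\sigma}}$ simultaneously for all $i\in\N_N$ and all $k$ with $\rho(\ga_k)\geq\om$.

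To obtain this I would apply Theorem \ref{thm: local law} on a slightly enlarged bulk domain: choose a bounded $\D'\subset\C^+$ on which $\rho(\re z)\geq\om/2$ and which contains the horizontal segment $\{E+\ii\eta:\rho(E)\geq\om\}$, and let $\D'_\sigma$ be its truncation to $\im z>N^{-1+\sigma}$. Two inputs are needed. First, to \emph{locate} $\la_k$: a routine consequence of the local law on $\D'$ --- Helffer--Sj\"ostrand integration of $\tfrac1N\tr G$ against test functions, together with the resulting control of the empirical counting function --- is bulk eigenvalue rigidity, $\absa{\la_k-\ga_k}\leq N^{-1+\sigma}$ for all $k$ with $\rho(\ga_k)\geq\om$; since $\rho$ is continuous, for such $k$ and $N$ large the point $\la_k+\ii\eta$ lies in $\D'_\sigma$. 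Second, to control the deterministic matrix: on $\D'_\sigma$ one has $\Phi=(N\eta)^{-1/2}+q^{-1/2}=\O1$, and the solvability theorem (invoking positive definiteness, which is assumed here) gives $\norm{M}=\O1$ uniformly on $\D'$, since $M$ lies within $\O{N^{-1}}$ of the discretization of the bounded element $m\in\cT$; hence $\absa{M_{ii}}=\O1$. Inserting the local law estimate $\sup_{z\in\D'_\sigma}\absa{G_{ij}-M_{ij}}\leq N^{\sigma}\Phi$ at $z=\la_k+\ii\eta$ yields $\im G_{ii}(\la_k+\ii\eta)\leq\absa{M_{ii}}+N^{\sigma}\Phi=\O{N^{\sigma}}$, as needed. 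Because the local law is already uniform in $z\in\D'_\sigma$, the estimate automatically covers the \emph{random} points $\la_k+\ii\eta$, so no additional continuity or net argument in $E$ is required.

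The only step that genuinely uses structure beyond treating the local law as a black box is the rigidity estimate locating the $\la_k$ inside the domain $\D'_\sigma$ where the local law holds; I expect this to be the main obstacle (it is standard, but needs the Helffer--Sj\"ostrand argument and the passage to the slightly weaker density bound $\om/2$, which then forces the constants to depend only on $\om$). The remainder is bookkeeping: a union bound over the at most $N^2$ pairs $(i,k)$, absorbed by enlarging $p$; the elementary uniform boundedness of $M$ on the bulk domain; and a final relabeling of $\sigma$. Altogether this gives $\absa{u_k(i)}^2\leq\eta\cdot\O{N^{\sigma}}=\O{N^{-1+2\sigma}}$ uniformly over the relevant $k$ and $i$ with the stated probability, which is the statement after renaming the free small exponent.
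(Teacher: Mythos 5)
Your argument is correct and is precisely the routine deduction the paper has in mind (the paper gives no proof of its own, deferring to \cite{Erdos2013a}): the spectral bound $\absa{u_k(i)}^2\leq \eta\,\im G_{ii}(\la_k+\ii\eta)$ with $\eta=N^{-1+\sigma}$, combined with the uniform-in-$z$ local law of Theorem \ref{thm: local law} and the boundedness of $M$ on the bulk domain. The one place where you do more work than necessary is in locating $\la_k$: full $N^{-1+\sigma}$ rigidity via Helffer--Sj\"ostrand is overkill, since you only need $\la_k$ to lie within a fixed distance of $\ga_k$ so that $\rho(\la_k)\geq \om/2$, and this $o(1)$ localization already follows from the global law (Theorem \ref{thm: global law}) together with the weak convergence of the empirical measure to $\mu$.
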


As a consequence of Theorem \ref{thm: local law}, we have the universality of $k$-point correlation functions in the bulk:
\begin{theorem}[Bulk universality]\label{thm: bulk universality}
Assume that $\xi$ is positive definite (see Definition \ref{def: pd}).  Let $E\in\R$ be in the bulk of $\rho$, that is, $\rho$ has a positive density in a neighborhood of $E$.  Let $O$ be a test function on $\R^k$.  Fix a parameter $b=N^{-1+c}$ for any $c>0$.  We have,
\ben
	\begin{split}
	\lim_{n\rightarrow \infty} \frac{1}{2b}\int_{E-b}^{E+b} \int_{\R^n} O(\alpha_1,\dots,\alpha_k) \left[ \frac{1}{\rho(E)^k}\rho^{(k)} \left( E'+\frac{\alpha_1}{N\rho(E)},\dots,E'+\frac{\alpha_k}{N\rho(E)}\right)\right.\\\left.
	-\frac{1}{(\rho_{sc}(E))^k} \rho_{GOE}^{(k)} \left( E''+\frac{\alpha_1}{\rho_{sc}^{(N)}(E)},\dots,E''+\frac{\alpha_k}{\rho_{sc}^{(n)}(E)}\right)\right]\dd \alpha_1\dots\dd\alpha_k\dd E' =0
	\end{split}
\ee
\end{theorem}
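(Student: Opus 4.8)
The plan is to follow the three-step strategy described in the introduction, of which the first step---a local law valid down to the scale $N^{-1+\nu}$---is already available as Theorem~\ref{thm: local law}. It therefore remains to (i) run a modified Dyson Brownian motion that preserves the covariance tensor $\xi$ and identify its short-time local statistics with those of the GOE, and (ii) show that this flow does not change the local statistics of $H$. For (i) I would introduce the following dynamics: letting $\Sigma=\Sigma\wo N$ denote the covariance matrix of $(x_{ij})_{i\le j}$, run the Ornstein--Uhlenbeck flow on the entries of $X$ whose driving Brownian motion has covariance $\Sigma$, so that $H_t:=N^{-1/2}X_t$ satisfies $H_t\stackrel{d}{=}e^{-t/2}H+\sqrt{1-e^{-t}}\,\mathcal G$ for a centered real symmetric Gaussian matrix $\mathcal G$, independent of $H$, having the \emph{same} covariance tensor $N^{-1}\xi$ as $H$. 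Then the covariance tensor of $H_t$ equals $N^{-1}\xi$ for all $t\ge0$, so $\sqrt N H_t$ satisfies the hypotheses of Section~\ref{sec: definition} ($K$-dependence, discretization of the piecewise Lipschitz $\psi$, positive definiteness), and by Theorems~\ref{thm: global law} and~\ref{thm: local law} it has the same limiting density $\rho$ and obeys the same local law as $H$, uniformly for $t$ in a bounded set.

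Next, since $\Sigma\wo N\ge c_0$ (Definition~\ref{def: pd}), I would split $\mathcal G=\sqrt{c_0/2}\,W+\mathcal G'$, with $W$ an independent standard real symmetric Gaussian matrix (i.i.d.\ entries up to symmetry, variance $N^{-1}$) of covariance tensor $N^{-1}\iota$, and $\mathcal G'$ a centered Gaussian matrix, independent of $W$ and of $H$, with covariance tensor $N^{-1}\xi'$ where $\xi'=\xi-(c_0/2)\iota$; because $\Sigma\ge c_0 I$, the tensor $\xi'$ is again positive definite (lower bound $c_0/2$) in the sense of Definition~\ref{def: pd}, $K$-dependent, and a discretization of a piecewise Lipschitz kernel. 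Writing $A_t:=e^{-t/2}H+\sqrt{1-e^{-t}}\,\mathcal G'$ gives $H_t\stackrel{d}{=}A_t+\sqrt s\,W$ with $A_t$ independent of $W$ and $s=s(t):=(1-e^{-t})c_0/2\asymp t$ as $t\to0$. The covariance tensor of $A_t$ is $N^{-1}(e^{-t}\xi+(1-e^{-t})\xi')$, which is positive definite with lower bound $c_0/2$ uniformly in $t\in[0,1]$, so $\sqrt N A_t$ again satisfies Theorems~\ref{thm: global law} and~\ref{thm: local law}; in particular $A_t$ obeys rigidity down to scale $N^{-1+\nu}$ in the bulk of its limiting density $\rho_{A_t}$, and since $\rho_{A_t}\to\rho$ as $t\to0$ by stability of the self-consistent equation~\eqref{eq: sce infty}, any fixed bulk energy $E$ of $\rho$ lies in the bulk of $\rho_{A_t}$ once $t$ is small.

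Then, fixing such an $E$ and taking $t=N^{-1+c}$ with $c>0$ small (so $s\asymp N^{-1+c}$), I would view $H_t=A_t+\sqrt s\,W$ as Dyson Brownian motion at time $s$ started from $A_t$; the rigidity of $A_t$ and the regularity of $\rho$ at $E$ put this in the framework of~\cite{Landon2015a}, which yields that the energy-averaged, appropriately rescaled $k$-point correlation functions of $H_t$ near $E$ converge to those of the GOE---that is, the asserted identity with $H_t$ in place of $H$. Finally I would compare the energy-averaged local correlation functions of $H_0=H$ and of $H_t$ for this $t$: since $H$ and $H_t$ have identical mean and covariance, this is a Green's function continuity estimate, and because an entrywise Lindeberg swap would violate the correlation constraints I would carry it out dynamically, differentiating the relevant functional of $G_t:=(H_t-z)^{-1}$ along the flow and bounding the terms with the local law (Theorem~\ref{thm: local law}), as in~\cite{Bourgade}. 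The two features absent from the Wigner case are that the cubic and higher-order terms involve joint cumulants of nearby entries---controlled by $K$-dependence---and that the off-diagonal entries of $G_t$ are of size $\O{\Phi+\norm{M}}$ rather than $\O{\Phi}$, which I would absorb using the a priori bound $\absa{G_{ij}-M_{ij}}\le N^\sigma\Phi$ and the control on the deterministic matrix $M$ solving~\eqref{eq: sce n}. Combining the two comparisons and averaging over the reference energy $E'$ gives the statement.

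The main obstacle will be the continuity step: respecting the short-range dependence forces the comparison to be carried out dynamically along the correlated flow rather than by independent entry swaps, and the non-negligible off-diagonal Green's function entries break the usual power counting, so the estimates on the higher-order It\^o terms have to be rederived using the precise control on $G_{ij}$ and on $M$. A secondary but essential point---resolved above by splitting off only half of the available GOE component---is to keep the residual covariance tensor strictly positive definite so that the intermediate matrix $A_t$ stays within the scope of Theorem~\ref{thm: local law}.
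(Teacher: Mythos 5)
Your proposal is correct and follows essentially the same route as the paper: a covariance-preserving Ornstein--Uhlenbeck flow, a decomposition of the Gaussian increment into a small GOE component plus a residual with still-positive-definite covariance tensor so that Theorem \ref{thm: local law} applies to the intermediate matrix, an application of \cite{Landon2015a} to the resulting DBM at time $t=N^{-1+c}$, and a dynamic (It\^o-formula) Green's function comparison in which the cumulant expansion is replaced by a Taylor-expansion decoupling adapted to $K$-dependence. The paper implements the continuity step exactly as you describe, via its Lemmas \ref{lem: decouple}--\ref{lem: comparison 2}, so no further comment is needed.
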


\section{Derivation of the self-consistent equations}\label{sec: derive sce}
In this section we will show that the Green's function $G$ approximately satisfies the equation \eqref{eq: sce n}, up to an error term of size $\Phi$.  The main result of this section is Lemma \ref{lem: sce app}.  The main tools are some algebraic identities stated in Subsection \ref{subsec: res} and concentration inequalities of quadratic forms of weakly dependent random variables  in Subsection \ref{subsec: concentration}.
\subsection{Resolvent indenities}\label{subsec: res}
\begin{notation}
\begin{enumerate}
\item
For $\T\subset\N$, denote $H^{(\T)}= (H^{(\T)}_{ij}):=(H_{ij}\ind{i\notin\T}\ind{j\notin\T})$ and $G^{(\T)}:=(H^{(\T)}-z)^{-1}$.  In the case $\T=\{k\}$, we write $H^{(k)}$ instead of $H^{(\{k\})}$.  If $k\notin \T$, we write $\T k:=\T \cup\{k\}$.
\item
For $\T,\S\subset\N$, Let $A_{\T,\S}$ be the submatrix of $A$ whose indices are in $\T\times\S$.
\item
For $z\in\C$, denote $\eta=\im z$, $E=\re z$.
\end{enumerate}
\end{notation}
\begin{definition}
We will frequently use a stochastic control parameter
\ben
	\Ga=1\vee\max_{ij}\absa{G_{ij}}\,.
\ee
For technical reasons we also need the following stochastic control parameter
\be\label{def: ga}
	\ga=1\vee \sup_i \sup_{\I,\J} \norma{(G_{\I,\I}\wo{\J})^{-1}}\,,
\ee
where the second $\sup$ is taken over all $\I,\J\subset[i-2K,\dots i+2K]$ such that $\I\cap\J=\emptyset$. 
\end{definition}

We prove two resolvent identities that we will frequently use in the article. These identities were named the first decoupling identity \cite{Erdos2012} and second resolvent decoupling identity \cite{Yau2013}. Although their proofs are very simple; they play fundamental roles in the proof of local laws for random matrices.
\begin{lemma}[Resolvent identities]\label{lem: res}
Let $H$ be a Hermitian matrix, $\T,\I,\J\subset\N$. If $\I\cap\T=\J\cap\T=\emptyset$, then
\be\label{eq: res diag}
	G_{\I,\J}=G_{\I,\J}^{(\T)}+G_{\I,\T}(G_{\T,\T})^{-1}G_{\T,\J}\,,
\ee
\be\label{eq: res off}
	G_{\T,\J}=-G_{\T,\T}H_{\T,\T^c}G_{\T^c,\J}^{(\T)}\,.
\ee
\end{lemma}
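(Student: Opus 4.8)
The statement concerns the last displayed Lemma — the two resolvent identities \eqref{eq: res diag} and \eqref{eq: res off}. Both are purely algebraic consequences of the block structure of the resolvent $G=(H-z)^{-1}$, and the plan is to derive them by manipulating $2\times 2$ block decompositions and the Schur complement. The main obstacle is bookkeeping: one must be careful about which resolvent ($G$ versus $G^{(\T)}$) is involved, and about the fact that $H^{(\T)}$ is obtained by zeroing out rows and columns in $\T$ rather than by deleting them, so that $G^{(\T)}$ still acts on the full index set but decouples $\T$ from $\T^c$.

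First I would prove \eqref{eq: res diag}. Write $A:=H-z$ and $A^{(\T)}:=H^{(\T)}-z$, so $G=A^{-1}$, $G^{(\T)}=(A^{(\T)})^{-1}$. Since $H^{(\T)}$ kills all entries with an index in $\T$, we have $A^{(\T)}=A - H_{\T,\T^c}-H_{\T^c,\T}-(H_{\T,\T}-z\,\mathrm{Id}_\T\text{-block adjustment})$; more cleanly, in the block form indexed by $\T$ and $\T^c$,
\be\nonumber
A=\bma A_{\T,\T} & A_{\T,\T^c}\\ A_{\T^c,\T} & A_{\T^c,\T^c}\ema,\qquad
A^{(\T)}=\bma -z\,I & 0\\ 0 & A_{\T^c,\T^c}\ema,
\ee
so that $A^{(\T)}$ is block diagonal and $(G^{(\T)})_{\T^c,\T^c}=(A_{\T^c,\T^c})^{-1}$ with $(G^{(\T)})_{\T,\J}=0$ for $\J\subset\T^c$. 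The Schur complement formula applied to $A$ gives the $\T^c,\T^c$ block of $G$ as $(A_{\T^c,\T^c}-A_{\T^c,\T}A_{\T,\T}^{-1}A_{\T,\T^c})^{-1}$, and the standard resolvent-expansion identity for inverses of block matrices yields, for $\I,\J\subset\T^c$,
\be\nonumber
G_{\I,\J}=(G^{(\T)})_{\I,\J}+G_{\I,\T}(G_{\T,\T})^{-1}G_{\T,\J},
\ee
which is exactly \eqref{eq: res diag}; here $(G_{\T,\T})^{-1}$ is the inverse of the $\T\times\T$ block of the full $G$, and one uses the identity $G_{\I,\T}(G_{\T,\T})^{-1}G_{\T,\J}= G_{\I,\T}A_{\T,\T^c}(G^{(\T)})_{\T^c,\J}$ as an intermediate step to recognize the correction term. (The cleanest route is: from $AG=I$ restricted to columns in $\J$ and rows in $\T$, solve for $G_{\T,\J}$ in terms of $G_{\T^c,\J}$; substitute into the $\I$-row equation and compare with the analogous relations for $A^{(\T)}$.)

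For \eqref{eq: res off}, I would start from the identity $AG=I$ read on the block of rows indexed by $\T$ and columns indexed by $\J\subset\T^c$ (so the right side vanishes): $A_{\T,\T}G_{\T,\J}+A_{\T,\T^c}G_{\T^c,\J}=0$, i.e. $G_{\T,\J}=-A_{\T,\T}^{-1}A_{\T,\T^c}G_{\T^c,\J}$. It remains to convert $A_{\T,\T}^{-1}$ into $G_{\T,\T}$ and $G_{\T^c,\J}$ into $G^{(\T)}_{\T^c,\J}$. For the first, the Schur complement identity for the $\T,\T$ block gives $G_{\T,\T}=(A_{\T,\T}-A_{\T,\T^c}(A_{\T^c,\T^c})^{-1}A_{\T^c,\T})^{-1}$, and a short computation (multiplying through and using $AG=I$ again) shows $A_{\T,\T}^{-1}A_{\T,\T^c}G_{\T^c,\J}=G_{\T,\T}A_{\T,\T^c}(A_{\T^c,\T^c})^{-1}G_{\T^c,\J}$; since $(A_{\T^c,\T^c})^{-1}=G^{(\T)}_{\T^c,\T^c}$ and $A_{\T,\T^c}=H_{\T,\T^c}$ (the $-z$ part lives only on the diagonal block $\T,\T$), and since $G^{(\T)}_{\T^c,\J}=(A_{\T^c,\T^c})^{-1}$ applied appropriately gives $G^{(\T)}_{\T^c,\J}$, we arrive at $G_{\T,\J}=-G_{\T,\T}H_{\T,\T^c}G^{(\T)}_{\T^c,\J}$, which is \eqref{eq: res off}. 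The main thing to get right is that $H^{(\T)}$ zeroes rather than deletes, so $G^{(\T)}$ and $G$ share the same ambient space and the block algebra is self-consistent; once that is fixed, both identities are two or three lines of block-matrix arithmetic with no analysis involved.
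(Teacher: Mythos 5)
Your proposal is correct in substance but takes a genuinely different route from the paper. You derive both identities from the $2\times 2$ block (Schur complement) decomposition of $A:=H-z$ with respect to $\T$ and $\T^c$: from the block-inverse formulas $G_{\T,\T}=(A_{\T,\T}-A_{\T,\T^c}(A_{\T^c,\T^c})^{-1}A_{\T^c,\T})^{-1}$, $G_{\T,\T^c}=-G_{\T,\T}A_{\T,\T^c}(A_{\T^c,\T^c})^{-1}$ and $G_{\T^c,\T^c}=(A_{\T^c,\T^c})^{-1}+G_{\T^c,\T}(G_{\T,\T})^{-1}G_{\T,\T^c}$, both \eqref{eq: res off} and \eqref{eq: res diag} follow upon restricting rows and columns and noting that $G^{(\T)}_{\T^c,\J}$ is precisely the $\J$-column block of $(A_{\T^c,\T^c})^{-1}$ (your observation that $H^{(\T)}$ zeroes rather than deletes, so $A^{(\T)}$ is block diagonal, is exactly the point that makes this legitimate). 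The paper instead starts from the single resolvent identity $G-G^{(\T)}=G(H^{(\T)}-H)G^{(\T)}$, observes that $H^{(\T)}-H$ is supported on rows and columns meeting $\T$ while $G^{(\T)}$ has vanishing $(\T,\T^c)$ blocks, and simply reads off the $(\T,\J)$ and $(\I,\J)$ blocks; this yields \eqref{eq: res off} directly and \eqref{eq: res diag} by substituting \eqref{eq: res off} into the $(\I,\J)$ block. The paper's route is shorter and avoids Schur complements entirely; yours is more self-contained for a reader who only knows the block-inverse formula. One intermediate step in your sketch is garbled: the asserted identity $A_{\T,\T}^{-1}A_{\T,\T^c}G_{\T^c,\J}=G_{\T,\T}A_{\T,\T^c}(A_{\T^c,\T^c})^{-1}G_{\T^c,\J}$ is false as written, since the right-hand side equals $-G_{\T,\T^c}G_{\T^c,\J}$ rather than $-G_{\T,\J}$. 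The repair is to bypass it entirely: the block-inverse formula gives $G_{\T,\T^c}=-G_{\T,\T}H_{\T,\T^c}(A_{\T^c,\T^c})^{-1}$ directly (using $A_{\T,\T^c}=H_{\T,\T^c}$), and restricting the columns to $\J$ is exactly \eqref{eq: res off}; this is a two-line fix, not a gap in the approach.
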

\begin{proof}
The second equation follows from taking the $(\T,\J)$-block of the resolvent identity
\ben
	G-G\wo{\T}=G(H\wo{\T}-H)G\wo{\T}\,.
\ee
On the other hand, taking the $(\I,\J)$-block of the same identity one has
\ben
	G_{\I,\J}=G_{\I,\J}^{(\T)}-G_{\I,\T}H_{\T,\T^c}G_{\T^c,\J}\wo{\T}\,.
\ee
Combining this equation with \eqref{eq: res off} yields \eqref{eq: res diag}.
\end{proof}

The first resolvent identity \eqref{eq: res diag} immediately impliy the following corollary, which will be used many times in the rest of the section.
\begin{corollary}\label{cor: off bd}
Let $\I\subset [i-2K,\dots i+2K]$, $k,l\notin \I$, then 
\ben
	\absa{G_{kl}\wo\I}\leq 8K^2 \Ga^2\ga\,.
\ee
\end{corollary}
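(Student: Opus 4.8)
The plan is to iterate the first resolvent identity \eqref{eq: res diag} to peel off the indices of $\I$ one at a time, writing $G\wo\I$ in terms of $G$ (with no removed indices) and the inverses of small diagonal blocks. Since $\absa{\I}\leq 4K+1$, this requires only boundedly many steps, so the accumulated constant is a universal constant of the form $C(K)$; the claim asserts $8K^2$, and I would simply track the combinatorics carefully to land on (or below) that bound. Concretely, enumerate $\I=\{p_1,\dots,p_r\}$ with $r\leq 4K+1$ and let $\I_t:=\{p_1,\dots,p_t\}$, so $\I_0=\emptyset$ and $\I_r=\I$. Applying \eqref{eq: res diag} with $\T=\{p_{t+1}\}$, $\I\mapsto\{k\}$, $\J\mapsto\{l\}$, relative to the matrix $H\wo{\I_t}$ (which is still Hermitian, so the lemma applies), gives
\be\nonumber
	G\wo{\I_t}_{kl}=G\wo{\I_{t+1}}_{kl}+G\wo{\I_t}_{k,p_{t+1}}\pr{G\wo{\I_t}_{p_{t+1},p_{t+1}}}^{-1}G\wo{\I_t}_{p_{t+1},l}\,.
\ee
So at each step the "error" term is a product of three resolvent entries of $H\wo{\I_t}$ and one scalar inverse. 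The scalar inverse is exactly one of the quantities controlled by $\ga$ in \eqref{def: ga}: here $\I=\J=\{p_{t+1}\}$ would give $\pr{G\wo{\I_t}_{p_{t+1},p_{t+1}}}^{-1}$, but one must check the index constraints — $\I_t$ and $\{p_{t+1}\}$ are disjoint subsets of $[i-2K,\dots,i+2K]$, which is why the hypothesis $\I\subset[i-2K,\dots,i+2K]$ is imposed — so each such inverse is bounded by $\ga$.

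The remaining issue is to bound the resolvent entries of the intermediate matrices $H\wo{\I_t}$, e.g. $\absa{G\wo{\I_t}_{k,p_{t+1}}}$. These are not directly controlled by $\Ga=1\vee\max_{ij}\absa{G_{ij}}$, which only bounds entries of $G$ itself. I would resolve this by a second, inner induction: apply \eqref{eq: res diag} again to rewrite $G\wo{\I_t}$ in terms of $G$ plus correction terms, bootstrapping the bound. A cleaner bookkeeping is to prove by induction on $t$ the uniform statement "every entry $\absa{G\wo{\I_t}_{ab}}$ with $a,b\notin\I_t$ is at most $C_t\Ga^{2t+1}\ga^t$" for suitable constants $C_t$: the base case $t=0$ is $\absa{G_{ab}}\leq\Ga$, and the inductive step uses the displayed identity, bounding the three $H\wo{\I_t}$-entries by the inductive hypothesis (for $G\wo{\I_t}$) and the scalar inverse by $\ga$, giving $\absa{G\wo{\I_{t+1}}_{kl}}\leq C_t\Ga^{2t+1}\ga^t + \pr{C_t\Ga^{2t+1}\ga^t}^2\cdot\ga$, which is absorbed into $C_{t+1}\Ga^{2t+3}\ga^{t+1}$ since $\Ga,\ga\geq 1$. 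Running $r\leq 4K+1$ steps and being slightly wasteful with exponents yields a bound of the form $C(K)\Ga^2\ga$ after noting that one can instead organize the peeling so that only the final correction term needs the full product structure; I would verify that the constant can be taken as $8K^2$, or else simply record the bound with an unspecified universal constant $C(K)$, which is all that is used downstream.

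The main obstacle is purely bookkeeping: controlling the entries of the \emph{reduced} resolvents $G\wo{\I_t}$ when only $G$ and small-block inverses are assumed bounded. There is no analytic difficulty — each resolvent identity is an exact algebraic relation and every term is a product of boundedly many factors each controlled by $\Ga$ or $\ga$ — but one must be careful that at every application of \eqref{eq: res diag} the index sets involved ($\I_t$, $\{p_{t+1}\}$, $\{k\}$, $\{l\}$) satisfy the disjointness hypotheses of Lemma \ref{lem: res} and that the scalar inverses that appear are genuinely among those bounded by $\ga$ in \eqref{def: ga}, i.e. that all relevant indices lie in $[i-2K,\dots,i+2K]$. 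Since $k,l$ need not lie in that window, the entries $G\wo{\I_t}_{k,p_{t+1}}$, $G\wo{\I_t}_{p_{t+1},l}$ are off-diagonal and get bounded via the inductive hypothesis rather than via $\ga$; only the diagonal correction factors $\pr{G\wo{\I_t}_{p_{t+1},p_{t+1}}}^{-1}$ with $p_{t+1}$ in the window use $\ga$. Keeping these roles straight through all $r$ steps is the whole content of the argument.
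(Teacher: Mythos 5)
Your peeling strategy is not the intended argument, and as written it does not prove the stated bound. The inner induction you set up gives $\absa{G\wo{\I_t}_{ab}}\leq C_t\Ga^{2t+1}\ga^t$ with exponents growing linearly in $t$, so after removing all $r=\absa{\I}\leq 4K+1$ indices you land on $C(K)\,\Ga^{O(K)}\ga^{O(K)}$, not $8K^2\Ga^2\ga$. You acknowledge this and assert that "one can instead organize the peeling so that only the final correction term needs the full product structure," but that is not an argument: every intermediate correction term genuinely contains off-diagonal entries of $G\wo{\I_t}$, which are only controlled through the inductive hypothesis, so the loss at each step is unavoidable in this scheme. Since the corollary is quoted downstream with the specific power $\Ga^2\ga$ (feeding into the $\Ga^5\ga^3$ bookkeeping of Lemma \ref{lem: sce app}), this is a real gap, not a cosmetic one.

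The fix --- and the paper's proof, which is why the corollary is called immediate --- is to apply \eqref{eq: res diag} \emph{once} with $\T=\I$ as a whole, $\I\mapsto\{k\}$, $\J\mapsto\{l\}$ (the disjointness hypotheses hold since $k,l\notin\I$), giving
\be\nonumber
	G\wo{\I}_{kl}=G_{kl}-G_{k,\I}\,(G_{\I,\I})^{-1}\,G_{\I,l}\,.
\ee
Every factor on the right is controlled directly: $\absa{G_{kl}}\leq\Ga$, the row and column vectors satisfy $\norma{G_{k,\I}},\norma{G_{\I,l}}\leq\sqrt{4K+1}\,\Ga$, and $\norma{(G_{\I,\I})^{-1}}\leq\ga$ because the definition \eqref{def: ga} permits $\J=\emptyset$ (so that $G_{\I,\I}\wo{\J}=G_{\I,\I}$) and a set-valued block $\I\subset[i-2K,\dots,i+2K]$. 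This yields $\absa{G\wo{\I}_{kl}}\leq\Ga+(4K+1)\Ga^2\ga\leq 8K^2\Ga^2\ga$ using $\Ga,\ga\geq1$. The point you missed is that $\ga$ is defined to bound inverses of \emph{blocks}, not just scalars, precisely so that the whole set $\I$ can be removed in one application of the identity with no intermediate resolvents appearing.
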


We will also need the Ward identity for the resolvent of Hermitian matrices.
\begin{lemma}[Ward Identity]\label{lem: ward}
	Let $A$ be an $m\times m$ Hermitian matrix and $z=E+\ii\eta\in\C^+$.  Let $B=(A-z)^{-1}$.  Then,
\ben
	\sum_k \absa{B_{ik}}^2= \im B_{ii}/\eta\,,\forall 1\leq i\leq m.
\ee
\end{lemma}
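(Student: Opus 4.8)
The plan is to reduce the identity to a one-line consequence of the resolvent identity. First I would rewrite the left-hand side as a single diagonal entry of $BB^*$. Since $A$ is Hermitian, $(A-z)^* = A - \bar z$, hence $B^* = (A-\bar z)^{-1}$, which is well defined because $\eta>0$. Recognizing the sum over $k$ as a matrix product,
\[
	\sum_k \absa{B_{ik}}^2 = \sum_k B_{ik}\,\overline{B_{ik}} = \sum_k B_{ik}\,(B^*)_{ki} = (BB^*)_{ii}\,.
\]

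Next I would apply the elementary resolvent identity to the two (invertible) matrices $A-z$ and $A-\bar z$, which differ only by the scalar $z-\bar z = 2\ii\eta$:
\[
	B - B^* = (A-z)^{-1} - (A-\bar z)^{-1} = (A-z)^{-1}\bigl[(A-\bar z)-(A-z)\bigr](A-\bar z)^{-1} = 2\ii\eta\,BB^*\,.
\]
Taking the $(i,i)$ entry and using $(B-B^*)_{ii} = B_{ii} - \overline{B_{ii}} = 2\ii\,\im B_{ii}$ gives $(BB^*)_{ii} = \im B_{ii}/\eta$, which together with the previous display yields the claim.

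There is no genuine obstacle here; the only point requiring care is the complex-conjugation bookkeeping — one must use Hermiticity of $A$ to identify $B^*$ with $(A-\bar z)^{-1}$ (so that $B-B^*$ produces exactly the factor $2\ii\eta$) and to identify $\sum_k\absa{B_{ik}}^2$ with $(BB^*)_{ii}$ rather than, say, $(B^2)_{ii}$. The hypothesis $\eta>0$ enters only to guarantee that every resolvent above is a bounded operator, so the manipulations are legitimate; the same computation shows the identity holds for any $z$ off the spectrum of $A$ with $\eta\neq 0$.
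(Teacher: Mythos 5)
Your proof is correct and complete: identifying $\sum_k\absa{B_{ik}}^2$ with $(BB^*)_{ii}$ and applying the resolvent identity to $A-z$ and $A-\bar z$ (which requires the Hermiticity of $A$ exactly where you invoke it) is the standard derivation of the Ward identity, and the paper itself states this lemma without proof. No gaps.
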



\subsection{Concentration inequalities}\label{subsec: concentration}

The following lemma is a corollary of Lemma A.1 in \cite{Erdos2013b}. 
\begin{lemma}\label{lem: ldp}
Let $(a_i)$ be a family of $K$-dependent random variables satisfying 
\ben
	\E{ \absa{a_i}^p }\leq \frac{ \mu_p^p}{N q^{p/2-1}}\,.
\ee
Let $(A_i)$, $(B_{ij})$ be deterministic families or random variables that are independent of $(a_i)$. Then for any $\sigma>0$ and $p$ large enough we have with probability at least $1-c_p N^{-\sigma p}$
\be\label{eq: ldp linear}
	\absa{\sum_i A_ia_i-\E{\sum_i A_i a_i}}\leq N^\sigma\pr{\frac{\sup_i\absa{A_i}}{\sqrt{q}}+\sqrt{\tfrac{1}{N}\sum_i\absa{A_i}^2}}\,,
\ee

\ben
	\absa{\sum_{i,j}B_{ij}a_ia_j-\E{\sum_{i, j} B_{ij}a_ia_j}}\leq N^\sigma\pr{\frac{\sup_{i\neq j}\absa{B_{ij}}}{\sqrt{q}}+\sqrt{\frac{1}{N^2}\sum_i\absa{B_{ij}}^2}}\,.
\ee
Here $c_p$ depends only on $p$.
\end{lemma}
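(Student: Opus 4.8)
The plan is to reduce both inequalities to the moment bound of Lemma~A.1 in \cite{Erdos2013b} by replacing the $K$-dependence with genuine independence at the cost of a constant depending only on $K$. First I would condition on the family $(A_i)$, $(B_{ij})$, which is assumed independent of $(a_i)$, so it suffices to treat $(A_i)$, $(B_{ij})$ as deterministic. Consider the linear statistic $Y = \sum_i A_i a_i - \E{\sum_i A_i a_i}$. Partition $\N_N$ into $2K+1$ residue classes $R_r = \{i : i \equiv r \pmod{2K+1}\}$, $r = 0,\dots,2K$; within each $R_r$ the variables $(a_i)_{i\in R_r}$ are mutually \emph{independent} by $K$-dependence. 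Writing $Y = \sum_{r} Y_r$ with $Y_r = \sum_{i\in R_r}(A_i a_i - \E{A_i a_i})$, I would apply the independent-entry concentration bound (Lemma~A.1 of \cite{Erdos2013b}, which gives high-moment control of $\absa{Y_r}$ by $N^\sigma(\sup_i\absa{A_i}/\sqrt{q} + (\tfrac1N\sum_{i\in R_r}\absa{A_i}^2)^{1/2})$ after using the moment hypothesis $\E{\absa{a_i}^p} \le \mu_p^p/(Nq^{p/2-1})$) to each $Y_r$ separately, then combine via the triangle inequality $\absa{Y} \le \sum_r \absa{Y_r}$ and a union bound over the $2K+1 = \O{1}$ classes. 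Since $\sum_{i\in R_r}\absa{A_i}^2 \le \sum_i \absa{A_i}^2$, the $2K+1$ terms are each bounded by the claimed right-hand side, and absorbing the factor $2K+1$ into $N^\sigma$ (valid for $N$ large) yields \eqref{eq: ldp linear} with the stated probability $1 - c_p N^{-\sigma p}$, where $c_p$ collects the $K$-dependent combinatorial factors.

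For the quadratic form $Z = \sum_{i,j} B_{ij}a_i a_j - \E{\sum_{i,j}B_{ij}a_i a_j}$ I would run the same residue-class decomposition in both indices: write $Z = \sum_{r,s} Z_{rs}$ where $Z_{rs}$ sums over $(i,j)\in R_r\times R_s$. For $r \ne s$ the pair of families $(a_i)_{i\in R_r}$ and $(a_j)_{j\in R_s}$ need not be jointly independent, but the relevant structure is that within $Z_{rs}$ one can further split off the near-diagonal terms (those $(i,j)$ with $\absa{i-j}\le K$, whose $a_i, a_j$ are dependent) — there are only $\O{N}$ such terms per block and they are handled by the linear bound already proved applied to the collapsed variables $a_i^2$ (or $a_i a_j$ with $i,j$ in a bounded window, treated as a new $K'$-dependent family) — leaving a genuinely decoupled bilinear part to which the standard Hanson–Wright-type estimate of \cite{Erdos2013b} applies, producing the bound $N^\sigma(\sup_{i\ne j}\absa{B_{ij}}/\sqrt{q} + (\tfrac1{N^2}\sum_{ij}\absa{B_{ij}}^2)^{1/2})$. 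Summing over the $\O{1}$ blocks and union-bounding finishes the quadratic case.

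The main obstacle I expect is the bookkeeping in the quadratic case: cleanly separating the "dependent" near-diagonal contribution from the "decoupled" bulk so that the former really is controlled by a \emph{linear}-type bound (hence contributes only the $\sup_{i\ne j}\absa{B_{ij}}/\sqrt{q}$ and $\ell^2$-type terms, not something larger), and verifying that the extra polynomial-in-$K$ prefactors arising from the finitely many residue classes and from re-indexing bounded windows can all be absorbed into $N^\sigma$. The probabilistic content is entirely off-the-shelf once the independence is manufactured; the work is combinatorial and is why the statement is phrased as a "corollary" of the cited lemma.
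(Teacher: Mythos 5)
Your proposal is correct and follows essentially the same route as the paper: the linear statistic is split into $\O{K}$ residue classes of mutually independent variables and Lemma A.1 of \cite{Erdos2013b} is applied to each, while the quadratic form is separated into a near-diagonal part ($\absa{i-j}\le K$), controlled by the already-proved linear estimate, and a far-off-diagonal part, decoupled into $\O{K^2}$ blocks and handled by the bilinear version of the same lemma. The only difference is cosmetic (you decompose into residue blocks before removing the near-diagonal terms, the paper does it in the opposite order), and the "bookkeeping obstacle" you flag is exactly the content the paper also leaves implicit.
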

\begin{proof}
For the first estimate, one only need to split the sum $\sum_i A_i a_i$ into at most $K$ parts, each part being the sum of independent random variables.  One can apply Lemma A.1 in \cite{Erdos2013b} to each part and get the first estimate.
For the second estimate, one can write
\ben
	\sum_{i,j}B_{ij}a_ia_j-\E{\sum_{i, j} B_{ij}a_ia_j}=\sum_{\absa{i-j}>K}B_{ij}a_ia_j+\left(\sum_{\absa{i-j}\leq K}B_{ij}a_ia_j-\E{\sum_{\absa{i-j}\leq K} B_{ij}a_ia_j}\right)\,.
\ee
The first sum on the right hand side can be split into at most $K^2$ parts and each part is estimated using Lemma A.1 in \cite{Erdos2013b}.  The second sum on the right hand side can be estimated using \eqref{eq: ldp linear}.
\end{proof}
\begin{corollary}\label{cor: ldp G}
Under the same condition as Lemma \ref{def: om}.  Assume further that $A_i=G_{ij}\wo{\T}$, $B_{ij}=G_{ij}\wo{\T}$ for some $\T \subset [k-2K,\dots k+2K]$ and $k\in\N$.  Recall the definition \eqref{def: ga} of $\ga$.  Then for any $\sigma>0$ and $p\geq 2$ we have with probability at least $1-c_p N^{-\sigma p}$
\ben
	\absa{\sum_i A_ia_i-\sum_i A_i \E{a_i}}\leq cN^\sigma\Phi\Ga^2\ga\,,
\ee
\ben
	\absa{\sum_{i\neq j} B_{ij}a_ia_j-\sum_{i\neq j} B_{ij}\E{a_ia_j}}\leq cN^\sigma\Phi\Ga^2\ga\,.
\ee
Here $c$ is an universal constant.
\end{corollary}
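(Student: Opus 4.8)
The plan is to deduce Corollary \ref{cor: ldp G} from Lemma \ref{lem: ldp} by substituting $A_i = G_{ij}\wo{\T}$ and $B_{ij} = G_{ij}\wo{\T}$ and then controlling the deterministic-looking quantities that appear on the right-hand side of the Lemma. The subtlety is that $A_i$ and $B_{ij}$ are themselves random, but since $\T \subset [k-2K,\dots,k+2K]$ and the summation variable $i$ (or $i,j$) we average over is the one coupled to the $a_i$, the entries $G\wo{\T}_{ij}$ are independent of the relevant block of $(a_i)$; more precisely one applies Lemma \ref{lem: ldp} conditionally on the minor $H\wo{\T}$ (or rather on the randomness generating $G\wo{\T}$), for which the $K$-dependent family $(a_i)$ involved is independent. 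This is exactly the ``decoupling'' being exploited: removing the $\T$-rows/columns severs the dependence between $G\wo{\T}$ and the entries $a_i = H_{ki}$ (or the analogous centered variables) feeding into the quadratic form.

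First I would fix $\T$ and condition on the sub-$\sigma$-algebra generated by $H\wo{\T}$, so that $G\wo{\T}$ becomes deterministic and the family $(a_i)$ remains $K$-dependent with the moment bound \eqref{eq: moments} (rescaled by $N^{-1/2}$ to account for the normalization $H = N^{-1/2}X$, which converts $\mu_p^p (N/q)^{p/2-1}$ into $\mu_p^p / (N q^{p/2-1})$ as required by Lemma \ref{lem: ldp}). Then I would apply Lemma \ref{lem: ldp} to get, with probability at least $1 - c_p N^{-\sigma p}$,
\be\nonumber
\absa{\sum_i A_i a_i - \sum_i A_i \E{a_i}} \leq N^\sigma\pr{\frac{\sup_i \absa{G_{ij}\wo{\T}}}{\sqrt{q}} + \sqrt{\tfrac1N \sum_i \absa{G_{ij}\wo{\T}}^2}}\,,
\ee
and similarly for the quadratic term. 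The next step is to bound each of the two terms on the right. For the $\sup$ term, Corollary \ref{cor: off bd} gives $\absa{G_{ij}\wo{\T}} \leq 8K^2 \Ga^2 \ga$ whenever the relevant indices avoid $\T$ (the finitely many exceptional indices $i \in \T$, at most $4K+1$ of them, contribute a negligible amount that can be absorbed, since each such term is $O(\Ga)$ times $a_i$, which is itself controlled by the moment bound). For the $\ell^2$ term I would invoke the Ward identity, Lemma \ref{lem: ward}, applied to $G\wo{\T}$: $\sum_i \absa{G_{ij}\wo{\T}}^2 = \im G\wo{\T}_{jj}/\eta \leq \Ga/\eta$, so $\sqrt{\tfrac1N \sum_i \absa{G_{ij}\wo{\T}}^2} \leq \sqrt{\Ga/(N\eta)} \leq \Ga^{1/2} (N\eta)^{-1/2}$, which is $\leq \Phi \Ga$ up to the constant. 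Combining, the right-hand side is $\leq c N^\sigma \Phi \Ga^2 \ga$, since $\Ga, \ga \geq 1$ and $\tfrac1{\sqrt q} \leq \Phi$. The quadratic estimate is handled identically, using $\sup_{i\neq j}\absa{B_{ij}} \leq 8K^2\Ga^2\ga$ from Corollary \ref{cor: off bd} and the Ward identity for the $\ell^2$ sum over $i$ of $\absa{B_{ij}}^2$.

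Since the conditional probability bound $1 - c_p N^{-\sigma p}$ holds for every realization of $H\wo{\T}$, it holds unconditionally by taking expectation over $H\wo{\T}$; and $\T$ is one of at most $N^{O(K)}$ — in fact $O(1)$ once $k$ is fixed and only finitely many $\T \subset [k-2K, k+2K]$ exist — choices, so a union bound over $\T$ costs only a constant factor in $c_p$ and is harmless (one can also enlarge $p$ slightly, or the statement is understood for each fixed $\T$). The main obstacle I anticipate is not any single estimate but the bookkeeping around the exceptional indices in $\T$ where Corollary \ref{cor: off bd} does not apply and where the definitions of $G\wo{\T}$ force $A_i = 0$ for $i \in \T$ — one must check that the centering $\sum_i A_i \E{a_i}$ is consistent and that no spurious contribution from $i \in \T$ enters either side; this is routine but requires care to state precisely. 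A secondary point is making sure the moment normalization matches: the $a_i$ in this corollary are the (centered) entries of $\sqrt N\, H$ divided appropriately, i.e. they satisfy $\E{\absa{a_i}^p} \leq \mu_p^p/(N q^{p/2-1})$ exactly as in the hypothesis of Lemma \ref{lem: ldp}, which follows from \eqref{eq: moments} after the $N^{-1/2}$ rescaling.
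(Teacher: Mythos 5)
Your proposal is correct and follows essentially the same route as the paper: apply Lemma \ref{lem: ldp} with the minor $G^{(\T)}$ playing the role of the deterministic coefficients (justified by independence from $(a_i)$ after removing the rows/columns in $\T$), bound the $\ell^2$ term via the Ward identity and the sup term via Corollary \ref{cor: off bd}, and absorb everything into $cN^\sigma\Phi\Ga^2\ga$. The only nitpick is that $\im G^{(\T)}_{jj}$ should be bounded by $8K^2\Ga^2\ga$ through Corollary \ref{cor: off bd} rather than by $\Ga$ directly (since $\Ga$ controls $G$, not its minors), but this does not affect the final estimate.
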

\begin{proof}
By Lemma \ref{lem: ldp} and the Ward identity, for any $\sigma>0$ and $p\geq 2$ we have with probability at least $1-c_p N^{-\sigma p}$
\ben
	\absa{\sum_i G_{ij}\wo{\T} a_i -\E{\sum_iG_{ij}\wo{\T} a_i}}\leq N^\sigma\Phi \left(\sup_i \absa{G_{ij}\wo{\T}}+\sqrt{ \im \absa{G_{jj}\wo{\T}}}\right)\,,
\ee
which is $\O{N^\sigma\Phi\Ga^2\ga}$ in view of Corollary \ref{cor: off bd}.  The second estimate holds similarly.

\end{proof}

\subsection{Expansion of the Green's function}
Throughout this section, we fix an arbitrarily small $\sigma>0$ and an integer $p\geq 100/\sigma$.
\begin{definition}\label{def: om}
Let $(a\wo{N})$ and $(b\wo{N})$ be two sequences of random variables.  We say that $a=\Om{b}$ if there are universal constants $c, s>0$ and a constant $c_p$ depending only on $p$ such that  $\absa{a}\leq c b$ holds with probability at least $1-c_p N^{-\sigma p+s}$.
\end{definition}

We start with the trivial identity
\be\label{eq: trivial}
	\sum_kG_{ik}H_{kj}-G_{ij}z=\de_{ij}\,.
\ee
Let $\T$ be the set of indices that are correlated with $j$. 
\begin{lemma}
\be\label{eq: first expansion}
	-G_{i\T}H_{\T,\T^c}G_{\T^c,\T^c}\wo{\T}H_{\T^c,j}-G_{ij}z=\de_{ij}+\Om{N^\sigma\Phi\Ga^2\ga}\,.
\ee
\end{lemma}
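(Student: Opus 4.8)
\textbf{Proof strategy for \eqref{eq: first expansion}.}
The plan is to start from the trivial identity \eqref{eq: trivial}, split the sum over $k$ according to whether $k$ lies in $\T$ (the set of indices whose $j$-th column entries are correlated with $x_{*j}$, so $|\T|=\O{K}$) or not, and then use the resolvent identities of Lemma \ref{lem: res} together with the concentration estimate of Corollary \ref{cor: ldp G} to replace the part of the sum involving $H_{\T^c,j}$ by its structured main term. First I would write $\sum_k G_{ik}H_{kj} = \sum_{k\in\T} G_{ik}H_{kj} + \sum_{k\notin\T} G_{ik}H_{kj}$. For the first piece, since $|\T|=\O{K}$ and each $|G_{ik}|\leq\Ga$, $|H_{kj}|\leq N^{-1/2}|x_{kj}|$, a crude bound (using the moment assumption \eqref{eq: moments} and a union bound to control $\max_{k\in\T}|x_{kj}|$ by $N^\sigma\sqrt{N/q}\cdot N^{-1/2}=N^\sigma/\sqrt q\leq N^\sigma\Phi$) shows this is $\Om{N^\sigma\Phi\Ga}$, which is absorbed into the error.

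For the second piece, I would apply \eqref{eq: res off} with the single block $\T$ to obtain $G_{i k}$ for $k\notin\T$ in terms of $G_{i\T}$... more precisely, the cleaner route is: apply \eqref{eq: res off} to get $G_{\T,\T^c}$-type expressions. Actually the direct approach: use \eqref{eq: res diag} to write $G_{ik} = G_{ik}\wo{\T} + G_{i\T}(G_{\T\T})^{-1}G_{\T k}$ for $k\in\T^c$. Then $\sum_{k\notin\T}G_{ik}H_{kj} = \sum_{k\notin\T}G_{ik}\wo{\T}H_{kj} + G_{i\T}(G_{\T\T})^{-1}\sum_{k\notin\T}G_{\T k}H_{kj}$. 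In the first of these, $G\wo{\T}$ is independent of the column $(x_{kj})_{k\notin\T}$ (this is exactly the decoupling that replaces Schur complement: $H\wo{\T}$ does not involve row/column $j$ at all when $j\in\T$), so by the linear bound in Corollary \ref{cor: ldp G} this sum concentrates around its expectation; but $\E{x_{kj}}=0$ for $k\notin\T$ (indeed these are uncorrelated with, and by $K$-dependence independent of, $x_{*j}$), hence $\E{G_{ik}\wo{\T}H_{kj}}=0$ and the whole term is $\Om{N^\sigma\Phi\Ga^2\ga}$. It remains to treat $\sum_{k\notin\T}G_{\T k}H_{kj}$; here I apply \eqref{eq: res off} once more, now in the form $G_{\T k}\wo{?}$... the key identity needed is to recognize $\sum_{k\notin\T}G_{\T k}H_{kj}$ as coming from the $(\T,j)$ (or rather off-diagonal) structure. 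Using \eqref{eq: res off} with block $\T$ applied to the $(\T,\T^c)$ entries: $G_{\T,\T^c}=-G_{\T\T}H_{\T,\T^c}G\wo{\T}_{\T^c,\T^c}$, so that after contracting against $H_{\cdot,j}$ and collecting, one reaches the expression $-G_{i\T}H_{\T,\T^c}G\wo{\T}_{\T^c,\T^c}H_{\T^c,j}$ displayed in \eqref{eq: first expansion}, modulo the concentration error; the $\de_{ij}$ and $-G_{ij}z$ carry over unchanged from \eqref{eq: trivial}.

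The main obstacle — and the place where the finite-range structure is essential — is bookkeeping the decoupling correctly: one must make sure that at the step where Corollary \ref{cor: ldp G} is invoked, the coefficients $G_{ik}\wo{\T}$ are genuinely independent of the randomness $(x_{kj})_{k\notin\T}$ being summed against, which forces $\T$ to be chosen as \emph{all} indices correlated with column $j$ (so $\T\supset\{j\}$ and $\T\supset\{k:|k-j|\leq K\}$), and to verify that $\E{x_{kj}}=0$ kills the expectation term. A secondary technical point is propagating the control parameters $\Ga$ and $\ga$: every application of \eqref{eq: res diag}/\eqref{eq: res off} introduces factors of $(G_{\T\T})^{-1}$ and $G_{i\T}$, bounded via $\ga$ and $\Ga$ respectively (cf. Corollary \ref{cor: off bd}), and one should check the powers of $\Ga$ and $\ga$ incurred do not exceed those in \eqref{eq: first expansion} (a factor $\Ga^2\ga$); since $\T$ has bounded size this is a finite computation, and the constant it produces is universal. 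Finally the various $\Om{\cdot}$ terms are combined using that a bounded number of events of probability $\geq 1-c_pN^{-\sigma p+s}$ intersect to an event of the same form, which is exactly the content of Definition \ref{def: om}.
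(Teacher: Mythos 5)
Your proposal follows essentially the same route as the paper: discard the $\O{K}$ terms with $k\in\T$ at cost $\Om{N^\sigma\Phi\Ga}$, then use \eqref{eq: res diag} to split $G_{i,\T^c}H_{\T^c,j}$ into a decoupled part $G^{(\T)}_{i,\T^c}H_{\T^c,j}$ (killed by Corollary \ref{cor: ldp G} since $\E{x_{kj}}=0$ and $G^{(\T)}$ is independent of the $j$-th column once $\T$ contains all indices correlated with $j$) plus $G_{i\T}(G_{\T\T})^{-1}G_{\T,\T^c}H_{\T^c,j}$, which \eqref{eq: res off} converts into the displayed main term after the $(G_{\T\T})^{-1}$ cancels. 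This is exactly the paper's argument for the case $i\notin\T$.

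The one point you need to add: your central decomposition $G_{ik}=G^{(\T)}_{ik}+G_{i\T}(G_{\T\T})^{-1}G_{\T k}$ invokes \eqref{eq: res diag} with $\I=\{i\}$, which requires $\I\cap\T=\emptyset$; it is therefore inapplicable when $i\in\T$, i.e.\ precisely in the diagonal regime $|i-j|\lesssim K$ where the $\de_{ij}$ term matters most. The paper handles this by a case split: for $i\in\T$ one applies \eqref{eq: res off} directly to the row $G_{i,\T^c}=-G_{i\T}H_{\T,\T^c}G^{(\T)}_{\T^c,\T^c}$, which yields \eqref{eq: first expansion} immediately with the smaller error $\Om{N^\sigma\Phi\Ga}$ and no appeal to concentration. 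This is in fact the route you first sketched and then abandoned in favor of "the direct approach"; you need both branches, each in its own range of $i$.
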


\begin{proof}
Since $H_{ik}=\Om{N^\sigma\Phi}$, one can drop a few terms from the sum $\sum_kG_{ik}H_{kj}$,
\ben
	G_{i,\T^c}H_{\T^c,j}-G_{ij}z=\de_{ij}+\Om{N^\sigma\Phi\Ga}\,.
\ee
There are two cases:
\begin{enumerate}
\item $i\in\T$. By \eqref{eq: res off} we have
\be\label{eq: first expansion 1}
	-G_{i\T}H_{\T,\T^c}G_{\T^c,\T^c}\wo{\T}H_{\T^c,j}-G_{ij}z=\de_{ij}+\Om{N^\sigma\Phi\Ga}\,.
\ee
\item $i\notin\T$. By \eqref{eq: res diag} we have
\ben
	G_{ik}\wo{\T}H_{kj}+G_{i,\T}(G_{\T,\T})^{-1} G_{\T,k}H_{kj}=\de_{ij}+\Om{N^\sigma\Phi\Ga}\,.
\ee
The first term on the left hand side is $\Om{N^\sigma\Phi \Ga^2\ga}$ by Corollary \ref{cor: ldp G}, while the second term equals $-G_{i,\T}H_{\T,\N}G_{\N,\N}\wo{\T}H_{\N,j}$ by \eqref{eq: res off}. Therefore,
\be\label{eq: first expansion 2}
	-G_{i\T}H_{\T,\T^c}G_{\T^c,\T^c}\wo{\T}H_{\T^c,j}-G_{ij}z=\de_{ij}+\Om{N^\sigma\Phi\Ga^2\ga}\,.
\ee
\end{enumerate}

The lemma follows from \eqref{eq: first expansion 1} and  \eqref{eq: first expansion 2}.
\end{proof}

Let $k\in \T$, we are going to estimate $H_{k,\T^c}G_{\T^c,\T^c}\wo{\T}H_{\T^c,j}\,$ that appears on the left hand side of \eqref{eq: first expansion}.
Now let $\S\subset\T^c$ be the set of indices correlated with $\T$ and let $\U=\S\cup\T$.  Then we can split
\be\label{eq: split}
	H_{k,\T^c}G_{\T^c,\T^c}\wo{\T}H_{\T^c,j}=H_{k,\U^c}G_{\U^c,\U^c}\wo{\T}H_{\U^c,j}+H_{k,\U^c}G_{\U^c,\S}\wo{\T} H_{\S,j}+H_{k,\S}G_{\S,\T^c}\wo{\T}H_{\T^c,j}\,.
\ee
By \eqref{eq: res diag}, $G_{\U^c,\U^c}\wo{\T}=G_{\U^c,\U^c}\wo{\U}+G_{\U^c,\S}\wo{\T}(G_{\S,\S}\wo{\T})^{-1} G_{\S,\U^c}\wo{\T}$. Then \eqref{eq: split} becomes simply
\be\label{eq: split 1}
	H_{k,\T^c}G_{\T^c,\T^c}\wo{\T}H_{\T^c,j}=H_{k,\U^c}G_{\U^c,\U^c}\wo{\U}H_{\U^c,j}+\Upsilon_{kj}\,,
\ee
where
\be\label{eq: split upsilon}
	\Upsilon_{kj}=H_{k,\U^c}G_{\U^c,\S}\wo{\T}(G_{\S,\S}\wo{\T})^{-1} G_{\S,\U^c}\wo{\T}H_{\U^c,j}+H_{k,\U^c}G_{\U^c,\S}\wo{\T} H_{\S,j}+H_{k,\S}G_{\S,\T^c}\wo{\T}H_{\T^c,j}\,.
\ee

The good news is that one can condition on the index set $\U^c$ and apply Lemma \ref{lem: ldp} to the first term on the right hand side of \eqref{eq: split 1}, which yields the following lemma:
\begin{lemma}\label{lem: sce app}
\ben
	-\sum_{k,l,m}G_{ik}\xi_{kljm}G_{lm}-G_{ij}z=\de_{ij}+\Om{N^{2\sigma}\Phi\Ga^5\ga^3}\,.
\ee

\end{lemma}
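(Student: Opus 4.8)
The plan is to continue the expansion from \eqref{eq: split 1} by showing that the error term $\Upsilon_{kj}$ is negligible and that the leading term $H_{k,\U^c}G_{\U^c,\U^c}\wo{\U}H_{\U^c,j}$ can be replaced by a deterministic quantity, namely a sum of entries of $G$ against $\xi$, up to error $\Om{N^{2\sigma}\Phi\Ga^5\ga^3}$. First I would bound $\Upsilon_{kj}$. Each of its three terms is a quadratic or bilinear form in the entries $H_{k,\cdot}, H_{\cdot,j}$ over index sets of bounded size $|\S|,|\T|\le c(K)$, sandwiching resolvent entries of minors $G\wo{\T}, G\wo{\U}$. Since each $H_{ab}=\Om{N^\sigma\Phi}$, and each resolvent entry of a minor is $\O{\Ga^2\ga}$ by Corollary \ref{cor: off bd}, while $(G_{\S,\S}\wo{\T})^{-1}$ has norm $\O{\ga}$ by definition \eqref{def: ga}, every term of $\Upsilon_{kj}$ is $\Om{N^{2\sigma}\Phi^2 \Ga^5\ga^3}$, which is absorbed into the stated error since $\Phi\le 1$ and $\Phi^2\le\Phi$; note the sums here are over $O(1)$ indices so no extra factors of $N$ appear.

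Next I would analyze the main term. Condition on the $\sigma$-algebra generated by all entries $H_{ab}$ with $a,b\in\U^c$; then $G\wo{\U}$ is measurable with respect to it, while the entries $(H_{k,a})_{a\in\U^c}$ and $(H_{a,j})_{a\in\U^c}$ are, by $K$-dependence and the construction of $\S,\T,\U$ as the sets of correlated indices, independent of this $\sigma$-algebra. Writing $H_{k,\U^c}G_{\U^c,\U^c}\wo{\U}H_{\U^c,j}=\sum_{a,b\in\U^c} H_{ka}G_{ab}\wo{\U}H_{bj}$, I would split off the diagonal-in-correlation part: by Corollary \ref{cor: ldp G} (applied with $a_i$ ranging over the relevant rescaled entries $\sqrt{N}H_{ka}$ etc., and $B_{ab}=N^{-1}G_{ab}\wo{\U}$), the fluctuation of this bilinear form around its conditional expectation is $\Om{N^\sigma\Phi\Ga^2\ga}$. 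The conditional expectation is $\sum_{a,b} \E{H_{ka}H_{bj}}\,G_{ab}\wo{\U} = N^{-1}\sum_{a,b}\xi_{kabj}G_{ab}\wo{\U}$. Finally I would swap $G\wo{\U}$ back for $G$ using \eqref{eq: res diag} repeatedly (the number of removed rows is $O(1)$, and each replacement costs $\O{\Ga^2\ga}$ per entry), and extend the sum over $a,b$ from $\U^c$ to all of $\N$; the added terms involve $\xi_{kabj}$ with $a$ or $b$ in $\U$, of which there are only $O(1)$, each contributing $\Om{N^{-1}\Ga}$, hence negligible.

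Assembling: substituting \eqref{eq: split 1} and the above into \eqref{eq: first expansion}, and summing over $k\in\T$ (again $O(1)$ terms) the factor $-G_{i\T}H_{\T,\U^c}(\cdots)$ combines with the $G_{ik}$ prefactor to produce $-N^{-1}\sum_{k,a,b,\,j}G_{ik}\xi_{kabj}G_{ab}$ after relabeling $a,b\mapsto l,m$; using $\Xi$'s definition \eqref{def: Xi} this is exactly $-\sum_{k,l,m}G_{ik}\xi_{kljm}G_{lm}$. Keeping track of how the error parameters compound — one factor of $N^\sigma$ from \eqref{eq: first expansion}, another from Corollary \ref{cor: ldp G}, and powers of $\Ga,\ga$ accumulating through the resolvent-identity replacements and the norm bound on $(G_{\S,\S}\wo{\T})^{-1}$ — yields the claimed $\Om{N^{2\sigma}\Phi\Ga^5\ga^3}$.

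I expect the main obstacle to be the bookkeeping in the conditioning step: one must verify carefully that $\S$ and $\T$ (and hence $\U$) have been chosen large enough that $(H_{k,a})_{a\in\U^c}$ is genuinely independent of $G\wo{\U}$ — i.e. that no entry $H_{ab}$ with $a,b\in\U^c$ is correlated with any $H_{k,a}$ with $a\in\U^c$ — so that Lemma \ref{lem: ldp} applies conditionally; and then to confirm that all the auxiliary index sets stay of size $O_K(1)$ so that none of the "drop finitely many terms" steps injects a hidden factor of $N$. The analytic estimates themselves are routine given Corollary \ref{cor: off bd}, Corollary \ref{cor: ldp G}, and the Ward identity.
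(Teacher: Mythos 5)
Your overall architecture is the paper's: continue from \eqref{eq: first expansion} and \eqref{eq: split 1}, dispose of $\Upsilon_{kj}$, concentrate the leading bilinear form after conditioning on the entries indexed by $\U^c$, and then swap $G\wo{\U}$ for $G$. The conditioning step for the leading term is correct and is exactly what the paper does. However, two of your estimates fail as written. The first is the bound on $\Upsilon_{kj}$. You assert that ``the sums here are over $O(1)$ indices so no extra factors of $N$ appear,'' but in every term of \eqref{eq: split upsilon} at least one summation index runs over $\U^c$ or $\T^c$, i.e.\ over $\sim N$ values; only $\S$ and $\T$ are of size $O_K(1)$. Bounding each factor entrywise, as you propose, therefore loses a factor of $N$ (or $N^2$) and gives nothing. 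The two long contractions must be treated asymmetrically: $G_{\S,\U^c}\wo{\T}H_{\U^c,j}$ is genuinely small, $\Om{N^\sigma\Phi\Ga^2\ga}$, because $(H_{aj})_{a\in\U^c}$ is centered and independent of $G\wo{\T}$, so Lemma \ref{lem: ldp} plus the Ward identity applies; whereas $H_{k,\U^c}G_{\U^c,\S}\wo{\T}$ is \emph{not} small --- for $k\in\T$ with $k\neq j$ the entries $H_{ka}$ are still correlated with entries of $H\wo{\T}$, so there is no concentration around zero --- and one must instead use the resolvent identity \eqref{eq: res off} to rewrite it as $-(G_{\T,\T})^{-1}G_{\T,\S}$ up to a small error, which is merely bounded, $\Om{N^\sigma\Ga^2\ga}$. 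This asymmetry is precisely the ``off-diagonal entries of $G$ may be of order one'' difficulty that the paper singles out, and your symmetric entrywise treatment misses it.

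The second gap is the replacement of $G\wo{\U}$ by $G$ in $\tfrac1N\sum_{l,m}\xi_{kljm}G\wo{\U}_{lm}$. You price this at $\O{\Ga^2\ga}$ ``per entry,'' but the sum has $\sim NK$ nonzero terms (all $l$, with $\absa{l-m}\leq K$), so with the $N^{-1}$ prefactor the naive bound is $\O{K\Ga^2\ga}$ --- order one, not an admissible error. The correct bound $\O{\Phi^2\Ga\ga}$ requires writing $G_{lm}-G\wo{\U}_{lm}=-G_{l,\U}(G_{\U,\U})^{-1}G_{\U,m}$ and applying Cauchy--Schwarz in $l,m$ together with Lemma \ref{lem: ward}, which converts the double sum into $\O{(N\eta)^{-1}\Ga\ga}\leq\O{\Phi^2\Ga\ga}$. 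Invoking the Ward identity only at the very end as a generic ``routine'' tool does not repair either of these two steps as you have set them up; both need to be rewritten along the lines above before the claimed error $\Om{N^{2\sigma}\Phi\Ga^5\ga^3}$ can be assembled.
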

\begin{proof}
In view of \eqref{eq: first expansion} and \eqref{eq: split 1}, it is sufficient to prove that
\ben
	H_{k,\T^c}G_{\T^c,\T^c}\wo{\U}H_{\T^c,j}+\Upsilon_{kj}=\tfrac{1}{N}\sum_{l,m}\xi_{kljm}G_{lm}+\Om{N^{2\sigma}\Phi\Ga^4\ga^3}\,.
\ee
We first estimate the first term on the left hand side. By Corollary \ref{cor: ldp G} and the fact that $\E{x_{kl}x_{jm}}=\xi_{kljm}$ holds with at most $\O{1}$ exceptions for each $(k,j)$, we have
\ben
	H_{k,\U^c}G_{\U^c,\U^c}\wo{\U}H_{\U^c,j}=\tfrac{1}{N}\sum_{l,m}\xi_{kljm}G_{lm}\wo{\U} +\Om{N^\sigma\Phi\Ga^2\ga}\,.
\ee
By Corollary \ref{cor: off bd},
$
	\tfrac{1}{N}\sum_{l,m}\xi_{kljm}G_{lm}\wo{\U}=\tfrac{1}{N}\sum_{l,m}\xi_{kljm}G_{lm}+\O{\Phi^2\Ga\ga}\,.
$
Here we have used Cauchy-Schwarz inequality and Lemma \ref{lem: ward}. Plugging into the above equation we have
\be\label{eq: split 2}
	H_{k,\U^c}G_{\U^c,\U^c}\wo{\U}H_{\U^c,j}=\tfrac{1}{N}\sum_{l,m}\xi_{kljm}G_{lm} +\Om{N^\sigma\Phi\Ga^2\ga}\,.
\ee
Then we estimate $\Upsilon_{kj}$. By  \eqref{eq: res off} we have
\ben
	H_{\T,\U^c}G_{\U^c,\S}\wo{\T}=-(G_{\T,\T})^{-1} G_{\T,\S}+\Om{N^\sigma\Ga^2\ga}=\Om{N^\sigma\Ga^2\ga}\,.
\ee
On the other hand we use Lemma \ref{lem: ldp} to get 
\ben
	G_{\S,\U^c}\wo{\T}H_{\U^c,j}=\Om{N^\sigma\Phi\Ga^2\ga}\,,\quad G_{\S,\T^c}\wo{\T}H_{\T^c,j}=\Om{N^\sigma\Phi\Ga^2\ga}\,.
\ee
Combining all the three estimates above we get
\be\label{eq: split 3}
	\Upsilon_{kj}=\Om{N^{2\sigma}\Phi\Ga^4\ga^3}\,.
\ee
The lemma follows from \eqref{eq: split 2} and \eqref{eq: split 3}.
\end{proof}
\section{Solving the self-consistent equations}\label{sec: sce}
In this section, we show the unique solvability and stability of equation \eqref{eq: sce n} and \eqref{eq: sce infty}. We also show that as $N$ goes to infinity, the solution $M$ to \eqref{eq: sce n} converges to the solution $m$ of \eqref{eq: sce infty} in a certain sense. We give an explicit construction of $M$ from $m$, up to an error of size $\O{N^{-1}}$.

\subsection{Solution for fixed $N$ and $z$}
The strategy to solve \eqref{eq: sce n} is to write it as a fixed point equation and apply the Earle-Hamilton fixed point theorem to a certain subdomain of $\C^{N\times N}$.  This was done by Helton \textit{et al.} \cite{Helton2007} in a general setting for operator-valued self-consistent equations.  For the readers' convenience, we give self-contained proofs in our case.  We also prove the stability of \eqref{eq: sce n} under perturbations that has small $\norm{\cdot}_\infty$ norm.  This relies on the off-diagonal decay of the solution.  The main results of this subsection are Theorem \ref{thm: solvable} and Theorem \ref{thm: stable 1}.

 We restate the self-consistent equation below, recalling the map $\Xi$ defined in \eqref{def: Xi}:
\ben
	M(-z-\Xi(M))=I\,,z\in\C^+\,.
\ee
It is remarkable that $\Xi(M)$ is a band matrix with band width $2K+1$, which will be used to prove the exponential decay of off-diagonal entries of the solution.  We are going to solve the equation using a fixed-point argument.  For this purpose, we define
\be\label{def: F}
	F(M):=(-z-\Xi(M))^{-1}\,.
\ee
Then equation \eqref{eq: sce n} becomes simply
\ben
	M=F(M)\,.
\ee
However, $F$ is not defined on the entire space $\C^{N\times N}$, therefore we introduce a domain where $F$ is well-defined.  

In the sequel, we denote the operator norm of a matrix $A$ by $\norm{A}$ and denote $\max_{i,j}\absa{A_{ij}}$ by $\norm{A}_\infty$.  Note that $\norm{\cdot}$ is a stronger norm since $\norm{A}_\infty\leq \norm{A}$.
\begin{definition}
Define $\Mp_N=\{M\in\C^{N\times N}: \tfrac{1}{2\ii}(M-M^*)>0\}$. For $\e,\delta\geq0$, define $\M_N({\e,\delta})= \{M\in\C^{N\times N}: \tfrac{1}{2\ii}(M-M^*)> \e\,, \norma{M}< \delta\}$.
\end{definition}
The following lemma says that the map $F$ is well defined on $\Mp_N$ and is from $\Mp_N$ to itself, since $F$ is the composite of two maps $M\mapsto z+\Xi(M)$ and $M\mapsto -M^{-1}$.
\begin{lemma}\label{lem: closed}
The space $\Mp_N$ is closed under addition and closed under the map
\ben
	M\mapsto z+\Xi(M)\,,
\ee
for any $z\in \C^+$.  

It is also closed under the map $M\mapsto -M^{-1}$, which maps $\M_N({\e,\de})$ into $\M_N({\frac{\e}{\de^2},\frac{1}{\e}})$.
\end{lemma}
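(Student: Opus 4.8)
The plan is to verify each of the three closure claims in turn, all of which are essentially linear-algebraic facts about the cone of matrices with positive-definite imaginary part. Write $\im A := \tfrac{1}{2\ii}(A - A^*)$ for the Hermitian "imaginary part" of a matrix $A$, so that $\Mp_N = \{A : \im A > 0\}$ and $\M_N(\e,\de) = \{A : \im A > \e,\ \norm{A} < \de\}$. First, closure under addition is immediate: $\im(A+B) = \im A + \im B$, and the sum of two positive-definite Hermitian matrices is positive definite. Second, for the map $M \mapsto z + \Xi(M)$: since $z \in \C^+$, $\im(zI) = (\im z)\, I > 0$, so it suffices to check that $\Xi$ preserves the property $\im M \geq 0$ (then $\im(z + \Xi(M)) \geq (\im z) I > 0$). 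For this I would use the structure of $\Xi$ in \eqref{def: Xi}, namely $(\Xi(M))_{ik} = \tfrac{1}{N}\sum_{j,l}\xi_{ijkl}M_{jl}$, together with the representation $\xi_{ijkl} = \E{\hat x_{ij}^*\, \hat x_{kl}}$ valid for all $(i,j,k,l)\in\N^4$ established in \eqref{def: hat x}. Expanding $\im \Xi(M)$ and pairing against an arbitrary test vector $v = (v_i)$, one gets
\[
\langle v, (\im \Xi(M))\, v\rangle = \tfrac{1}{N}\, \E{\, \langle u(\hat x),\ (\im M)\, u(\hat x)\rangle\, }
\]
for a suitable random vector $u(\hat x) = \big(\sum_i \hat x_{ij} v_i\big)_j$ (indexed by $j$), where the expectation comes from $\xi_{ijkl} = \E{\hat x_{ij}^*\hat x_{kl}}$; since $\im M > 0$ the integrand is pointwise $\geq 0$, hence $\im \Xi(M) \geq 0$. (The reason one needs $\hat x$ rather than $x$ is precisely that the identity $\E{x_{ij}x_{kl}} = \xi_{ijkl}$ fails for general index orderings, whereas $\xi_{ijkl} = \E{\hat x_{ij}^*\hat x_{kl}}$ holds universally — this is the "compensation" remarked on after \eqref{def: hat x}.)

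Third, for the map $M \mapsto -M^{-1}$ on $\M_N(\e,\de)$: first note $\im M > \e > 0$ forces $M$ to be invertible (if $Mv = 0$ then $\langle v, (\im M) v\rangle = \tfrac{1}{2\ii}(\langle v, Mv\rangle - \langle Mv, v\rangle) = 0$, contradicting $\im M > \e$ unless $v = 0$). Then use the standard identity
\[
\im(-M^{-1}) \;=\; \tfrac{1}{2\ii}\big( {-M^{-1}} + (M^*)^{-1}\big) \;=\; (M^*)^{-1}\,\big(\tfrac{1}{2\ii}(M - M^*)\big)\, M^{-1} \;=\; (M^{-1})^* (\im M)\, M^{-1}.
\]
From this, for any unit vector $v$, writing $w = M^{-1}v$, we have $\langle v, \im(-M^{-1}) v\rangle = \langle w, (\im M) w\rangle \geq \e \norm{w}^2 = \e \norm{M^{-1}v}^2 \geq \e / \norm{M}^2 > \e/\de^2$, using $\norm{M^{-1}v} \geq \norm{v}/\norm{M}$ and $\norm{M} < \de$. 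This gives $\im(-M^{-1}) > \e/\de^2$. Finally, the norm bound $\norm{-M^{-1}} = \norm{M^{-1}} \leq 1/\e$ follows because $\norm{Mv} \geq \langle v, (\im M) v\rangle / \norm{v} \geq \e \norm{v}$ for all $v$ (by $\im M > \e$ and Cauchy–Schwarz applied to $|\langle v, Mv\rangle| \geq |\im \langle v, Mv\rangle|$), which bounds $\norm{M^{-1}} \leq 1/\e$. Hence $-M^{-1} \in \M_N(\e/\de^2, 1/\e)$.

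The only genuinely non-routine point is the second claim — that $\Xi$ maps $\{\im M \geq 0\}$ into itself — and specifically making the random-vector identity above precise while being careful about the index ranges and the $i \leq j$ versus $i > j$ cases in the definitions of $\xi$ and $\hat x$. Everything else is the standard Cayley-transform / Nevanlinna–Pick-type bookkeeping for matrices with positive imaginary part, so I expect the write-up to be short.
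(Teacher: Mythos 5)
Your proposal is correct and follows essentially the same route as the paper: the positivity of $\im\,\Xi(M)$ is obtained from the representation $\xi_{ijkl}=\E{\hat x_{ij}^*\hat x_{kl}}$ (the paper phrases this as $\Xi(M)=\E{\hat H^*M\hat H}$, which is exactly your random-vector identity), and the inverse map is handled via the same factorization $\im(-M^{-1})=(M^{-1})^*(\im M)M^{-1}$ together with the numerical-range bound $\norm{M^{-1}}\leq\e^{-1}$. No gaps.
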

\begin{proof}
Clearly $\Mp_N$ is closed under addition. Now suppose $M\in\Mp_N$, in order to show $z+\Xi(M)\in\Mp_N$, we need to show $\eta+\tfrac{1}{2\ii}\Xi(M-M^*)>0$.  Note that the map $\Xi$ has a representation 
\be\label{eq: xi rep}
	\Xi(M)=\E{\hat H^*M\hat H}\,,
\ee
where $\hat H:=(N^{-1/2}\hat x_{ij})$ (see \eqref{def: hat x}). Therefore, we only need to show that  for any unit vector $v$,
\ben
	\E{v^*\left(\eta+\hat H^*\tfrac{1}{2\ii}(M-M^*)\hat H\right)v}>0\,,
\ee
which is clearly true provided $\tfrac{1}{2\ii}(M-M^*)>0$.  
 
Next, $\Mp_N$ is closed under the map $M\mapsto-M^{-1}$  because
\ben
	\tfrac{1}{2\ii}(-M^{-1}+M^{-1*})=M^{-1}\left(\tfrac{1}{2\ii}(M-M^*)\right)M^{-1*}\,,
\ee
which is positive definite provided $\tfrac{1}{2\ii}(M-M^*)>0$.  In particular, if $M\in\M_N(\e,\de)$, then the above equation gives
\ben
	\tfrac{1}{2\ii}(-M^{-1}+M^{-1*})\geq \tfrac{\e}{\norma M^2}> \e\delta^{-2}\,.
\ee
Meanwhile, $\norma{M^{-1}}\leq \e^{-1}$ holds because 
\ben
	\inf_{\norma{v}=1}\absa{v^*Mv}\geq \inf_{\norma{v}=1}\absa{v^*\tfrac{1}{2\ii}(M-M^*)v}\geq \e\,.
\ee
The two estimates above yield $M\in\M_N({\frac{\e}{\de^2},\frac{1}{\e}})$.
\end{proof}
The following corollary tells us that the map $F$ not only maps $\Mp_N$ to itself, but also takes a compact subset of $\Mp_N$ to its 'strict' interior.
\begin{corollary}
Choose 
\be\label{def: e d}
	\delta=4\eta^{-1}\,,\e=\eta(K\de/2+\absa{z})^{-2}\,,\,.
\ee
The image of $\M_N(0,\de/2)$ under the map $F$ is contained in $\M_N(\e,\de/4)$, whose $\e$-neighborhood is a subset of $\M_N(0,\de/2)$.  
\end{corollary}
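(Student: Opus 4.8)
The plan is to verify the three claimed inclusions in turn, using Lemma~\ref{lem: closed} as the main engine and tracking constants carefully. First I would record the elementary bounds on $\Xi$: since $\Xi(M)$ is a band matrix of width $2K+1$ whose entries are convex combinations (weighted by $\xi_{ijkl}$, each $\le 1$ in absolute value by $\mu_2=1$ and Cauchy--Schwarz) of entries of $M$, one has $\norma{\Xi(M)}\le (2K+1)\norma{M}_\infty\le (2K+1)\norma{M}$; a slightly more careful count using the $K$-dependence and $\norma{M}_\infty\le\norma{M}$ gives $\norma{\Xi(M)}\le K\norma{M}$ up to the exact bookkeeping of the band, which is the bound implicit in the choice \eqref{def: e d}. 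For $M\in\M_N(0,\de/2)$ this yields $\norma{-z-\Xi(M)}\le \absa{z}+K\de/2$.

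Next I would establish $F(\M_N(0,\de/2))\subset \M_N(\e,\de/4)$. By Lemma~\ref{lem: closed}, $M\in\Mp_N$ implies $z+\Xi(M)\in\Mp_N$, hence $-z-\Xi(M)\in\Mp_N$ as well (the imaginary part of $-z-\Xi(M)$ is $\eta+\tfrac{1}{2\ii}\Xi(M-M^*)$, and here $M\in\M_N(0,\de/2)$ gives $\tfrac{1}{2\ii}(M-M^*)>0$, so in fact $\im(-z-\Xi(M))\ge \eta$). Applying the second part of Lemma~\ref{lem: closed} with the roles reversed: $-z-\Xi(M)\in\M_N(\eta,\absa{z}+K\de/2)$, so its inverse (up to sign) lies in $\M_N\!\big(\tfrac{\eta}{(\absa z+K\de/2)^2},\tfrac1\eta\big)=\M_N(\e,\de/4)$ by the definitions \eqref{def: e d}. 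That is exactly the first inclusion. The one point to be careful about is that Lemma~\ref{lem: closed} as stated maps $\M_N(\e',\de')$ into $\M_N(\e'/\de'^2,1/\e')$, so one wants $\de'=\absa z+K\de/2$ and $\e'=\eta$, giving imaginary part lower bound $\eta/(\absa z+K\de/2)^2=\e$ and norm bound $1/\eta=\de/4$.

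Finally I would check that the $\e$-neighborhood (in $\norma{\cdot}$, presumably) of $\M_N(\e,\de/4)$ sits inside $\M_N(0,\de/2)$: if $\norma{M-M'}<\e$ with $M'\in\M_N(\e,\de/4)$, then $\norma M<\de/4+\e\le \de/2$ (using $\e\le\de/4$, which holds since $\e=\eta/(\absa z+K\de/2)^2\le \eta/( K\de/2)^2$ and $\de=4/\eta$ makes this small — one should just check the numbers, or if necessary shrink $\e$ by a harmless universal factor), and the Hermitian part satisfies $\tfrac{1}{2\ii}(M-M^*)> \e - \norma{M-M'}>0$ because perturbing by an operator of norm $<\e$ moves the smallest eigenvalue of the Hermitian part by less than $\e$. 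So $M\in\M_N(0,\de/2)$, possibly after trivially enlarging $\M_N(0,\de/2)$'s radius is unnecessary since $\de/4+\e<\de/2$. The main obstacle, such as it is, is purely bookkeeping: making sure the explicit constants in \eqref{def: e d} are chosen so that all three inequalities ($\e\le\de/4$ for the norm, $\eta\le(\absa z+K\de/2)^2\cdot\de/4$ equivalently, and $\e$ small enough that the neighborhood closes up) hold simultaneously; there is no conceptual difficulty once Lemma~\ref{lem: closed} and the band-width bound on $\Xi$ are in hand.
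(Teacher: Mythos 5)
Your argument is correct and follows the paper's route: bound $\norma{\Xi(M)}$, conclude $z+\Xi(M)\in \M_N(\eta,\absa{z}+K\de/2)$, and feed this into the last part of Lemma \ref{lem: closed} to read off $F(M)=-(z+\Xi(M))^{-1}\in\M_N(\e,\de/4)$. The one step where you diverge is the operator-norm bound on $\Xi$: the paper gets $\norma{\Xi(M)}\le K\norma{M}$ in one line from the representation $\Xi(M)=\E{\hat H^*M\hat Hv}$-type identity \eqref{eq: xi rep}, whereas your band-matrix counting really gives $\norma{\Xi(M)}\le (2K+1)^2\norma{M}_\infty$ (the $(2K+1)\norma{M}_\infty$ you quote is the \emph{entrywise} bound of Lemma \ref{lem: mollify}; passing to the operator norm of the width-$(2K+1)$ band costs another factor), so with your route the constant $K$ in \eqref{def: e d} would have to be enlarged --- harmless, since $K$-dependent constants are universal here, but it is why the paper can say the image lands \emph{exactly} in $\M_N(\e,\de/4)$ with the stated $\e$. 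Your final check of the $\e$-neighborhood claim is a step the paper leaves implicit, and the needed inequality $\e\le\de/4$ holds without any shrinking of $\e$ because $\absa{z}\ge\eta$ gives $\e\le\eta\absa{z}^{-2}\le\eta^{-1}=\de/4$; also note the small sign slip that $\im(-z-\Xi(M))=-\eta-\tfrac{1}{2\ii}\Xi(M-M^*)$ is negative --- the lemma is applied to $z+\Xi(M)$, as you in effect do.
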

\begin{proof}
If $M\in\M_N(0,\de/2)$, then by \eqref{eq: xi rep},
\ben
	\norma{\Xi(M)} =\sup_{\norma{v}=1} \E {v^*\hat H^* M\hat H v}\leq \norma{M}\sup_{\norma{v}=1} \E {\norm{\hat Hv}^2}
\ee
Here $\E {\norm{\hat Hv}^2}=\tfrac{1}{N}\sum_{i,j,j'}\xi_{ijij'}v_j^*v_{j'} \leq K$.  Therefore 
\be\label{eq: Xi bd}
	\norma{\Xi(M)}\leq K\norma{M}\,.
\ee
It follows that $z+\Xi(M)\in \M_N(\eta, \absa{z}+K\de/2)$.  The last sentence of Lemma \ref{lem: closed} implies that $F(M)=-(z+\Xi(M))^{-1}\in \M_N(\eta(\absa{z}+K\de/2)^{-2}, \eta^{-1})$, which is exactly $\M_N(\e,\de/4)$ by our choice of $\e$ and $\de$. 
\end{proof}

The existence of a fixed point of $F$ follows from the following theorem:
 
 \begin{theorem}[Earle-Hamilton \cite{Earle70}]\label{thm: EH}
Let $\mathcal{D}$ be a nonempty domain in a complex
Banach space $X$ and let $h : \D \to\D$ be a bounded holomorphic function. If
$h(\D)$ lies strictly inside $\D$, in the sense that there is an $\e$ such that the $\e$-neighborhood of $h(\D)$ is a subset of $\D$, then $h$ has a unique fixed point in $\D$.
\end{theorem}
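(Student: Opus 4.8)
The plan is to establish Earle--Hamilton along classical lines: put on $\D$ an intrinsic pseudometric for which every holomorphic self-map is distance-nonincreasing, use the hypothesis that $h(\D)$ lies $\e$-strictly inside $\D$ to upgrade this to a genuine contraction on a suitable complete metric subspace, and then apply the Banach fixed point theorem. For the metric I would use the Carath\'eodory--Reiffen--Finsler pseudometric: for a domain $\Omega\subseteq X$, $x\in\Omega$ and $v\in X$ set $\alpha_\Omega(x;v):=\sup\{|Dg(x)v|:g\in\mathrm{Hol}(\Omega,\U)\}$, where $\U$ is the open unit disc of $\C$; this is a seminorm in $v$, and it is finite since $\alpha_\Omega(x;v)\le\|v\|/r$ whenever $B(x,r)\subseteq\Omega$ (Cauchy estimate). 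Put $\rho_\Omega(x,y):=\inf_\gamma\int_0^1\alpha_\Omega(\gamma(t);\dot\gamma(t))\,dt$ over piecewise-$C^1$ paths $\gamma$ in $\Omega$ from $x$ to $y$; since domains are polygonally path-connected this is a finite pseudometric. The Schwarz--Pick property is automatic: if $f:\Omega_1\to\Omega_2$ is holomorphic and $g\in\mathrm{Hol}(\Omega_2,\U)$ then $g\circ f\in\mathrm{Hol}(\Omega_1,\U)$, so $\alpha_{\Omega_2}(f(x);Df(x)v)\le\alpha_{\Omega_1}(x;v)$ and hence $\rho_{\Omega_2}(f(x),f(y))\le\rho_{\Omega_1}(x,y)$; in particular $h$ is $\rho_\D$-nonexpansive.

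Next I would reduce to a bounded domain and prove the contraction. Since $h$ is bounded, $M:=\operatorname{diam}h(\D)<\infty$; replacing $\D$ by the $\e$-neighborhood $\D_0:=h(\D)+\e B_X$ (where $B_X$ is the open unit ball of $X$) does not change the problem, as $\D_0$ is again a domain with $\D_0\subseteq\D$, $h(\D_0)\subseteq h(\D)\subseteq\D_0$, $\operatorname{diam}\D_0\le M+2\e<\infty$, fixed points of $h$ unchanged, and $h(\D_0)$ still $\e$-strictly inside $\D_0$. Fix $\e'\in(0,\e)$. For each (running) point $w\in\D_0$ consider $\Phi_w(u):=h(u)+\tfrac{\e'}{M}\bigl(h(u)-h(w)\bigr)$: this is holomorphic in $u$, and since $\|\Phi_w(u)-h(u)\|\le\tfrac{\e'}{M}M=\e'<\e$ we get $\Phi_w(u)\in h(\D_0)+\e B_X\subseteq\D_0$, so $\Phi_w:\D_0\to\D_0$, with $\Phi_w(w)=h(w)$ and $D\Phi_w(w)=(1+\tfrac{\e'}{M})Dh(w)$. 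Applying Schwarz--Pick to $\Phi_w$ and using the homogeneity of $\alpha_{\D_0}$ in the vector slot, $(1+\tfrac{\e'}{M})\,\alpha_{\D_0}\bigl(h(w);Dh(w)v\bigr)=\alpha_{\D_0}\bigl(h(w);D\Phi_w(w)v\bigr)\le\alpha_{\D_0}(w;v)$, hence $\alpha_{\D_0}(h(w);Dh(w)v)\le k\,\alpha_{\D_0}(w;v)$ with $k:=\tfrac{M}{M+\e'}<1$. Integrating this pointwise inequality along a near-optimal path for $\rho_{\D_0}(x,y)$ (and applying $h$ to it) gives $\rho_{\D_0}(h(x),h(y))\le k\,\rho_{\D_0}(x,y)$ for all $x,y\in\D_0$.

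To finish I would run the Banach argument on the closed set $\overline{h(\D_0)}$, which satisfies $\overline{h(\D_0)}\subseteq h(\D_0)+\e B_X\subseteq\D_0$. First, $(\overline{h(\D_0)},\rho_{\D_0})$ is complete: for $p\ne q$ in $\D_0$, Hahn--Banach supplies $L\in X^\ast$ with $\|L\|=1$, $L(q-p)=\|q-p\|$, and $w\mapsto L(w-p)/(2\operatorname{diam}\D_0)$ is a holomorphic map $\D_0\to\U$, so $\rho_{\D_0}(p,q)\ge\|p-q\|/(2\operatorname{diam}\D_0)$; thus $\rho_{\D_0}$-Cauchy sequences are norm-Cauchy, and lying in the closed set $\overline{h(\D_0)}$ they converge in norm to a point of $\overline{h(\D_0)}\subseteq\D_0$, while the Cauchy-estimate bound $\alpha_{\D_0}(w;v)\lesssim\|v\|$ valid in a neighborhood of $h(\D_0)$ promotes norm-convergence to $\rho_{\D_0}$-convergence. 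Since $h\bigl(\overline{h(\D_0)}\bigr)\subseteq h(\D_0)\subseteq\overline{h(\D_0)}$ and $h$ is a $\rho_{\D_0}$-contraction there, the Banach fixed point theorem yields a unique fixed point $u^\ast\in\overline{h(\D_0)}$; then $u^\ast=h(u^\ast)\in h(\D_0)\subseteq\D$, and any fixed point of $h$ in $\D$ lies in $h(\D)\subseteq\overline{h(\D_0)}$, hence equals $u^\ast$, giving uniqueness in $\D$.

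The step I expect to be the crux is the contraction estimate: Schwarz--Pick by itself only delivers nonexpansiveness, and extracting a factor strictly below $1$ hinges on the right choice of the point-dependent auxiliary maps $\Phi_w$ (built so that $\Phi_w(w)=h(w)$ while $\Phi_w$ dilates $h$ by the fixed factor $1+\e'/M$ at $w$) together with the Finsler homogeneity of $\alpha_{\D_0}$ --- this is exactly where the $\e$-gap and the finite diameter of $h(\D)$ enter quantitatively. The completeness verification is routine but still requires both the lower comparison $\rho_{\D_0}\gtrsim\|\cdot\|$ and the local upper comparison $\rho_{\D_0}\lesssim\|\cdot\|$ near $h(\D_0)$, and a little care with the non-convexity of $\D_0$ when bounding $\rho_{\D_0}$ by norm-lengths of short segments.
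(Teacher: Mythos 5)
Your proof is correct and is essentially the argument the paper itself runs (in Lemma \ref{lem: contraction} and Theorem \ref{thm: solvable}, specialized to the bounded convex domain $\M_N(0,\de/2)$): the Carath\'eodory--Finsler pseudometric, the point-dependent dilation $\Phi_w$ that converts Schwarz--Pick nonexpansiveness into a strict contraction with factor $M/(M+\e')$ using the $\e$-gap, and the two-sided comparison of the metric with the norm via Hahn--Banach functionals and the Schwarz lemma all match the paper's treatment. The only nit is in the last uniqueness step: the inclusion $h(\D)\subseteq\overline{h(\D_0)}$ is not justified as written (and need not hold), but it is also not needed, since a fixed point $u\in\D$ satisfies $u=h(u)\in h(\D)\subseteq\D_0$ and therefore $u=h(u)\in h(\D_0)\subseteq\overline{h(\D_0)}$.
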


However, we need more than this theorem to get the stability of solutions.  Therefore, we prove the following lemma in our settings.  The proof is a slight modification of that of Theorem \ref{thm: EH} (see \cite{Harris03} for its proof).
\begin{lemma}\label{lem: contraction}
There is a metric $\dd$ on $\M_N(0,\de/2)$ such that the map $F$ defined in \eqref{def: F} is a strict contraction. In particular,
\ben
	\dd(F(Q_1),F(Q_2))\leq (1+\e/\de)^{-1}\dd(Q_1,Q_2),
\ee
where $\e,\de$ are defined in \eqref{def: e d}\,.  

The metric is equivalent to $\norma{\cdot}$ in the interior of $\M_N(0,\de/2)$ in the sense that it satisfies
\ben
	\dd(Q_1,Q_2)\geq \de^{-1} \norma{Q_1-Q_2}\,,\text{for $Q_1,Q_2\in\M_N(0,\de/2)$,}
\ee
and
\ben
	\dd(Q_1,Q_2)\leq \beta^{-1}\norma{Q_1-Q_2}\,,\text{for $Q_1,Q_2\in\M_N(\beta,\de/2-\beta)$\,,$\beta>0$.}
\ee

\end{lemma}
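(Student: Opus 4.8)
The plan is to equip $\M_N(0,\de/2)$ with the Carath\'eodory--Finsler infinitesimal metric and then run Harris's proof of the Earle--Hamilton theorem while bookkeeping the explicit contraction ratio and the comparison with $\norm{\cdot}$. Abbreviate $\mathcal D:=\M_N(0,\de/2)$ and $\mathcal D_0:=\M_N(\e,\de/4)$, and recall from the preceding corollary that $\mathcal D$ is a bounded convex domain in $X:=\C^{N\times N}$ with $\mathrm{diam}_{\norm{\cdot}}\mathcal D\le\de$, that $F$ is holomorphic on $\mathcal D$ with $F(\mathcal D)\subseteq\mathcal D_0$, and that the $\e$-neighborhood of $\mathcal D_0$ lies inside $\mathcal D$. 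For $x\in\mathcal D$ and $v\in X$ I would set
\[
E(x,v):=\sup\{\,\absa{Df(x)v}\ :\ f\colon\mathcal D\to\U\ \text{holomorphic}\,\},
\]
($\U$ the open unit disk in $\C$), a seminorm in $v$ that is finite since $\mathcal D$ contains a norm ball about each of its points, and define
\[
\dd(Q_1,Q_2):=\inf_\gamma\int_0^1 E(\gamma(t),\dot\gamma(t))\,\dd t
\]
over $C^1$ paths $\gamma$ in $\mathcal D$ from $Q_1$ to $Q_2$; convexity of $\mathcal D$ makes the infimum finite, and the separation of points (hence that $\dd$ is a genuine metric) will come from the lower comparison bound below.

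The two standard ingredients I would record first are: (i) the Schwarz--Pick principle --- if $h\colon\mathcal D_1\to\mathcal D_2$ is holomorphic, precomposition $f\mapsto f\circ h$ gives $E_{\mathcal D_2}(h(x),Dh(x)v)\le E_{\mathcal D_1}(x,v)$, so holomorphic maps are $\dd$-nonexpansive, and $\mathcal D_1\subseteq\mathcal D_2$ implies $E_{\mathcal D_1}\ge E_{\mathcal D_2}$ on $\mathcal D_1\times X$ because there are more competing $f$; and (ii) the one-variable Schwarz--Pick lemma applied to $\zeta\mapsto f(p+\zeta v/\norm v)$ on a disk of radius $r$, which yields $E_{B(p,r)}(p,v)=\norm v/r$ for every norm ball $B(p,r)\subseteq X$. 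Both are used for the contraction and for the comparison estimates.

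The heart of the argument is the strict contraction, obtained by amplifying $F$ about its own image. Fix $x\in\mathcal D$ and consider
\[
\widehat F(w):=F(w)+\tfrac{\e}{\de}(F(w)-F(x))=(1+\tfrac{\e}{\de})F(w)-\tfrac{\e}{\de}F(x).
\]
Since $F(w),F(x)\in\mathcal D_0$ have norm $<\de/4$, the perturbation $\tfrac{\e}{\de}(F(w)-F(x))$ has norm $<\e$, so $\widehat F(w)$ lies in the $\e$-neighborhood of $\mathcal D_0$; hence $\widehat F\colon\mathcal D\to\mathcal D$ is holomorphic. Applying Schwarz--Pick to $\widehat F$ at $x$, where $\widehat F(x)=F(x)$ and $D\widehat F(x)=(1+\tfrac{\e}{\de})DF(x)$, and using homogeneity of $E(F(x),\cdot)$, gives
\[
(1+\tfrac{\e}{\de})\,E(F(x),DF(x)v)=E(F(x),(1+\tfrac{\e}{\de})DF(x)v)\le E(x,v).
\]
Integrating this pointwise inequality along a near-optimal path $\gamma$ from $Q_1$ to $Q_2$ and transporting by $F$ (so $t\mapsto F(\gamma(t))$ is admissible for $F(Q_1),F(Q_2)$) yields $\dd(F(Q_1),F(Q_2))\le(1+\e/\de)^{-1}\dd(Q_1,Q_2)$.

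Finally I would prove the comparison with $\norm{\cdot}$. For the lower bound, given $x\in\mathcal D$ and $v\neq 0$, choose by Hahn--Banach a functional $\ell$ with $\norm\ell=1$ and $\ell(v)=\norm v$; then $f(w):=\de^{-1}\ell(w-x)$ maps $\mathcal D$ into $\U$ because $\norm{w-x}<\de$ on $\mathcal D$, so $E(x,v)\ge\absa{Df(x)v}=\norm v/\de$, and integrating gives $\dd(Q_1,Q_2)\ge\de^{-1}\norm{Q_1-Q_2}$ (so $\dd$ separates points). For the upper bound, if $Q_1,Q_2\in\M_N(\beta,\de/2-\beta)$ then by convexity the segment joining them stays in $\M_N(\beta,\de/2-\beta)$, and for each $p$ on it one checks $B(p,\beta)\subseteq\mathcal D$ (both defining inequalities of $\mathcal D$ survive a perturbation of norm $<\beta$); hence $E(p,v)\le E_{B(p,\beta)}(p,v)=\norm v/\beta$, and integrating along the segment gives $\dd(Q_1,Q_2)\le\beta^{-1}\norm{Q_1-Q_2}$. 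The one point needing care is pinning the ratio to exactly $(1+\e/\de)^{-1}$: this is what forces the amplification $\widehat F$ (dilating about $F(x)$, not perturbing the argument of $F$), and it is precisely where the hypothesis that the $\e$-neighborhood of $\mathcal D_0$ lies in $\mathcal D$ enters; everything else is a routine unwinding of the disk Schwarz lemma.
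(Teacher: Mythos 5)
Your proposal is correct and follows essentially the same route as the paper: both construct the Carath\'eodory--Finsler metric on $\M_N(0,\de/2)$, obtain the ratio $(1+\e/\de)^{-1}$ by amplifying $F$ about its own image via $\hat F(Q)=F(Q)+\tfrac{\e}{\de}(F(Q)-F(Q_0))$ together with the Schwarz--Pick nonexpansiveness, and derive the two norm comparisons from a Hahn--Banach functional (lower bound) and the inclusion of $\beta$-balls in the domain (upper bound). The only difference is cosmetic: you phrase the upper bound via monotonicity of the infinitesimal metric under domain inclusion, where the paper applies the disk Schwarz--Pick lemma to $\phi(\zeta)=g(Q_0+\beta\zeta V/\norm{V})$ directly, which amounts to the same computation.
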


\begin{proof}
Let $\Delta$ be the unit disk in $\C$.  For any $Q\in\M_N(0,\de/2)$, $V\in \C^{N\times N}$, define
\ben
	\alpha(Q,V):=\sup\{\absa{Dg(Q)V}: g:\M_N(0,\de/2)\to\Delta\text{ holomorphic}\}\,,
\ee
where $D$ means differential. Here $\alpha(Q,V)$ is the so-called Caratheodory length of $V$ at point $Q$ in the domain $\M_N(0,\de/2)$.  It defines a norm on the tangent space at $Q$. Then one can define the length of a curve in $\M_N(0,\de/2)$ by integrating the length of tangent vectors.  For any piecewise smooth curve $\ga$ in $\M_N(0,\de/2)$, define the length of $\ga$ through
\ben
	L(\ga):=\int_0^1 \alpha(\ga(t),\ga'(t))\dd t\,.
\ee
Then we define the distance $\dd(Q_1,Q_2)$ between $Q_1$ and $Q_2$ by minimizing the length of curves connecting two points:
\ben
	\dd(Q_1,Q_2):=\inf_\ga \{L(\ga):\ga(0)=Q_1,\ga(1)=Q_2\}\,.
\ee
It is easy to check that $\dd$ is a metric. 
Now fix $Q_0\in\M_N(0,\de/2)$, define 
\ben
	\hat F(Q)=F(Q)+\e/\de(F(Q)-F(Q_0))\,,
\ee
which is a map from $\M_N(0,\de/2)$ to itself, because the diameter of $\M_N(0,\de/2)$ is at most $\de$.
Taking the diffrential at $Q_0$, $D\hat F(Q_0)=(1+\e/\de)DF(Q_0)$.
Let $g:\M_N(0,\de/2)\to\Delta$ be holomorphic. By the chain rule, for any $V\in\C^{N\times N}$, 
\ben
	D(g\circ\hat F)(Q_0)V=Dg(\hat F(Q_0))D\hat F(Q_0) V=(1+\e/\de)Dg(F(Q_0))DF(Q_0)V\,.
\ee
Since $g$ is arbitrary, by definition of $\alpha$,
\ben
	\alpha(F(Q_0),DF(Q_0)V)\leq (1+\e/\de)^{-1} \alpha(Q_0,V)\,.
\ee
This is true for any $Q_0\in\M_N(0,\de/2)$. Let $\ga$ be a smooth curve in $\M_N(0,\de/2)$, then for any $t\in[0,1]$ we set  $V=\ga'(t)$ and $Q_0=\ga(t)$ and see that
\ben
	\alpha(F(\ga(t)),DF(\ga(t))\ga'(t))\leq (1+\e/\de)^{-1}  \alpha(\ga(t),\ga'(t))\,.
\ee
Integrating over $t$, we have
\ben
	L(F(\ga))\leq (1+\e/\de)^{-1}  L(\ga)\,.
\ee
By definition of the metric $d$ we have
\ben
	\dd(F(Q_1),F(Q_2))\leq  (1+\e/\de)^{-1} \dd(Q_1,Q_2)\,.
\ee
Thus we have proved that $F$ is a strict contraction under $\dd$.  

For any $Q_0\in\M_N(0,\de/2)$, $V\in\M_N$, define a holomorphic map $g(Q):=\de^{-1} l (Q-Q_0)$, where $\l$ is in the dual space of $\M_N$ such that $\norma{l}=1$, $l(V)=\norma{V}$.  Then $g$ maps $\M_N(0,\de/2)$ into $\De$.  Since $Dg(Q_0)V=\de^{-1}l(V)$, we have $\alpha(Q_0,V)\geq \de^{-1} \norma{V}$.  It follows that for any piecewise smooth curve $\ga$, taking $Q_0=\ga(t)$ and $V=\ga'$ we have
\ben
	\alpha(\ga(t),\ga'(t))\geq \de^{-1}\norma{\ga'(t)}\,.
\ee
Integrating over $t$ yields
\ben
	L(\ga)\geq \de^{-1}\int_0^1\norma{\ga(1)-\ga(0)}\,.
\ee
Therefore, $\dd(Q_1,Q_2)\geq \de^{-1} \norma{Q_1-Q_2}$. 

On the other hand, for any $Q_0\in\M_N(\beta,\delta/2-\beta)$, $V\in\C^{N\times N}$ and holomorphic $g:\M_N(0,\de/2)\to \De$, define a holomorphic map $\phi: \De\to\De$ through
\ben
	\phi(\zeta):=g(Q_0+\beta\zeta V/\norma{V})\,.
\ee
By Schwartz-Pick theorem, $\absa{\phi'(0)}\leq 1$, therefore,
\ben
	\beta\norma{V}^{-1} \absa{Dg(Q_0)V}\leq 1\,.
\ee
Thus $\alpha(Q_0,V)\leq \beta^{-1}\norma{V}$.  It follows that for any piecewise smooth curve $\ga\subset \M_N(\beta,\delta/2-\beta)$, we have $\alpha(\ga,\ga')\leq \beta^{-1} \norma{\ga'}$.  By the convexity of $\M_N(\beta,\de/2-\beta)$, one can always find a curve $\ga\subset \M_N(\beta,\delta/2-\beta)$ that connects $Q_1$ and $Q_2$.  Integrating over $t$, 
\ben
	\dd(Q_1,Q_2)\leq \beta^{-1} \norma{Q_1-Q_2}\,.
\ee
\end{proof}

As consequences of Lemma \ref{lem: contraction}, we have the unique solvability of the equation \eqref{eq: sce n} as well as the stability of solutions.  The solution is obtained by iterating $F$ from an arbitrary initial point in $\Mp_N$.
\begin{theorem} \label{thm: solvable}
For each  $N\in\N$, $z\in\C^+$, equation \eqref{eq: sce n} has a unique solution in the class $\Mp_N$. The solution lies in the domain $\M_N(\eta(2K\eta^{-1}+\absa{z})^{-2},4\eta^{-1})$.
\end{theorem}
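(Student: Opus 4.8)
The plan is to read \eqref{eq: sce n} as the fixed-point equation $M=F(M)$ for the map $F$ of \eqref{def: F}, and to run the argument on the domain $\M_N(0,\de/2)$ with the constants $\de=4\eta^{-1}$, $\e=\eta(2K\eta^{-1}+\absa{z})^{-2}$ from \eqref{def: e d}. Existence and uniqueness of a fixed point inside this ball will be immediate from the preceding results: the corollary following Lemma \ref{lem: closed} shows that $F$ is a holomorphic self-map of $\M_N(0,\de/2)$ whose image $\M_N(\e,\de/4)$ has an $\e$-neighborhood contained in $\M_N(0,\de/2)$, so $F$ carries the domain strictly into itself and Theorem \ref{thm: EH} yields a unique fixed point $M\in\M_N(0,\de/2)$. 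Equivalently, Lemma \ref{lem: contraction} exhibits $F$ as a strict contraction of the metric space $(\M_N(0,\de/2),\dd)$, which is complete because $\dd$ is bi-Lipschitz-equivalent to $\norma{\cdot}$ on the interior; then $M$ is obtained as the limit of the iterates $F^{\circ n}(M_0)$, and since (as the third paragraph shows) $F$ sends all of $\Mp_N$ into $\M_N(0,\de/2)$, one may start from any $M_0\in\Mp_N$.

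Next I would pin down the location of $M$. Since $M$ is a fixed point, $M=F(M)\in F(\M_N(0,\de/2))$, which by the corollary above lies in $\M_N(\e,\de/4)$. With the choices \eqref{def: e d} one has exactly $\e=\eta(2K\eta^{-1}+\absa{z})^{-2}$ and $\de/4=\eta^{-1}\le 4\eta^{-1}$, so $M\in\M_N\!\left(\eta(2K\eta^{-1}+\absa{z})^{-2},\,4\eta^{-1}\right)$, which is the asserted domain.

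It then remains to upgrade uniqueness from the ball $\M_N(0,\de/2)$ to the full half-space $\Mp_N$, and for this I would show that \emph{every} solution $M\in\Mp_N$ of \eqref{eq: sce n} automatically lies in $\M_N(0,\de/2)$. Writing $M=-(z+\Xi(M))^{-1}$, Lemma \ref{lem: closed} gives $z+\Xi(M)\in\Mp_N$, and the computation in its proof — which uses that $\Xi(\cdot)=\E{\hat H^{*}(\cdot)\hat H}$ preserves positive semidefiniteness — shows $\tfrac{1}{2\ii}\bigl((z+\Xi(M))-(z+\Xi(M))^{*}\bigr)\ge\eta$. As in the closing lines of the proof of Lemma \ref{lem: closed}, this forces $\norma{(z+\Xi(M))^{-1}}\le\eta^{-1}$, hence $\norma{M}\le\eta^{-1}<\de/2$; combined with $\tfrac{1}{2\ii}(M-M^{*})>0$ this places $M$ in $\M_N(0,\de/2)$. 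Uniqueness inside $\M_N(0,\de/2)$ then gives uniqueness in $\Mp_N$.

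The substantive work — the Earle--Hamilton setup and the Caratheodory-metric contraction estimate — has already been done in Lemma \ref{lem: closed}, its corollary, and Lemma \ref{lem: contraction}, so the proof of the theorem is largely an assembly of these. The one step I expect to require genuine thought is the last one: confining an arbitrary $\Mp_N$-solution to the small ball $\M_N(0,\de/2)$ via the a priori bound $\norma{M}\le\eta^{-1}$, which hinges on the positivity-preserving property of $\Xi$ (so that the Hermitian imaginary part of $z+\Xi(M)$ cannot drop below $\eta$). Everything else is bookkeeping with the constants $\e,\de$ of \eqref{def: e d}.
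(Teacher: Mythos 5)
Your proposal is correct and follows essentially the same route as the paper: the paper's proof also iterates $F$ from an arbitrary $Q_0\in\Mp_N$, uses the mapping properties from Lemma \ref{lem: closed} and its corollary, and invokes the strict contraction of Lemma \ref{lem: contraction} on $\M_N(0,\de/2)$. Your explicit a priori bound $\norma{M}\leq\eta^{-1}$, which confines any $\Mp_N$-solution to the small ball before uniqueness is applied, is exactly the step the paper leaves implicit when it asserts that the limit of the iterates is the unique fixed point on all of $\Mp_N$.
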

\begin{proof}
	Take any $Q_0\in \Mp_N$. Define recursively $Q_{k+1}:=F(Q_k)$.  It is easy to check that $Q_1\in \M_N(\e,\de/4)$.
	 Lemma \ref{lem: contraction} implies that $\dd(Q_{k+1},Q_k)\leq C_z (1+\e/\de)^{-k}$. Therefore $(Q_{k})_{k\geq 0}$ is a Cauchy sequence under the metric $d$.  Thus $Q_\infty:=\lim_k Q_k$ is the unique fixed point of $F$ on $\Mp_N$.
\end{proof}

The stability of solution follows from the fact that $F$ is a strict contraction under the metric $\dd$ and that $\dd$ is equivalent to $\norma{\cdot}$ in the interior of $\M_N(0,\de/2)$.
\begin{theorem}\label{thm: stable}
Let $\e$ and $\de$ be defined as in \eqref{def: e d}.  Suppose that there is an $M'\in\Mp_N$ such that 
\ben
	M'=F(M')+R\,,
\ee
where $R\in \M_N$ satisfies $\norma{R}\leq \e/2$. Let $A\in\Mp_N$ be the solution of \eqref{eq: sce n}, i.e., $M=F(M)$. Then
\be\label{eq: sce r}
	\norma{M'-M}\leq 2\de^2 \e^{-2} \norma{R}\,.
\ee
\end{theorem}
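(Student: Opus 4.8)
The plan is to exploit the contraction property of $F$ in the Caratheodory metric $\dd$ established in Lemma~\ref{lem: contraction}, together with the two-sided comparison between $\dd$ and $\norma{\cdot}$ valid on the interior of $\M_N(0,\de/2)$. First I would record the location of the true solution: by Theorem~\ref{thm: solvable}, $M=F(M)$ lies in $\M_N(\e,\de/4)$, since $F$ maps all of $\M_N(0,\de/2)$ into $\M_N(\e,\de/4)$. Next I would argue that the approximate solution $M'$ also lands in a controlled subdomain: from $M'=F(M')+R$ and $F(M')\in\M_N(\e,\de/4)$ together with $\norma{R}\le\e/2$, one gets $\tfrac{1}{2\ii}(M'-M'^*)\ge\e/2$ and $\norma{M'}\le\de/4+\e/2<\de/2$, so both $M'$ and $M$ lie in $\M_N(\e/2,\,\de/2-\e/2)$ (using $\e\le\eta\le\de/4$ or similar to make room).

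Then the main estimate is a triangle-inequality argument in the metric $\dd$. Write
\[
\dd(M',M)\le \dd(M',F(M'))+\dd(F(M'),F(M))\le \dd(M',F(M'))+(1+\e/\de)^{-1}\dd(M',M),
\]
using $F(M)=M$ and the strict contraction from Lemma~\ref{lem: contraction}. Rearranging gives $\dd(M',M)\le (1+\de/\e)\,\dd(M',F(M'))$. Now I would bound $\dd(M',F(M'))$: since $M'-F(M')=R$ and both $M'$ and $F(M')$ lie in $\M_N(\beta,\de/2-\beta)$ with $\beta$ comparable to $\e$ (e.g. $\beta=\e/2$), the upper comparison $\dd(Q_1,Q_2)\le\beta^{-1}\norma{Q_1-Q_2}$ from Lemma~\ref{lem: contraction} yields $\dd(M',F(M'))\le (2/\e)\norma{R}$. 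Combining with the lower comparison $\dd(Q_1,Q_2)\ge\de^{-1}\norma{Q_1-Q_2}$ applied to the pair $(M',M)$, we get
\[
\de^{-1}\norma{M'-M}\le \dd(M',M)\le (1+\de/\e)\cdot\tfrac{2}{\e}\norma{R}\le \tfrac{2\de}{\e^2}\norma{R},
\]
so $\norma{M'-M}\le 2\de^2\e^{-2}\norma{R}$, which is exactly \eqref{eq: sce r}.

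The only delicate point — and the step I would be most careful about — is the bookkeeping that keeps every matrix appearing in the argument inside a domain $\M_N(\beta,\de/2-\beta)$ with a uniform lower bound $\beta\gtrsim\e$, so that the $\dd$-versus-$\norma{\cdot}$ comparisons of Lemma~\ref{lem: contraction} are applicable on the straight segment joining $M'$ and $M$ (convexity of $\M_N(\beta,\de/2-\beta)$ is what lets one use that curve). This requires checking that $\norma{R}\le\e/2$ is small enough relative to $\e$ and $\de$ that the perturbed point $M'$ does not escape past the boundary; the hypothesis $\norma{R}\le\e/2$ together with $\e\le\eta$ and $\de=4\eta^{-1}$ gives exactly the slack needed. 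Everything else is a direct assembly of the estimates already proved in Lemma~\ref{lem: contraction}, Theorem~\ref{thm: solvable}, and the preceding corollaries.
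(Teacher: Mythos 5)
Your argument is correct and is essentially the paper's own proof: both locate $M'$ and $F(M')$ inside $\M_N(\beta,\de/2-\beta)$ with $\beta\sim\e$, apply the strict contraction of $F$ in the Caratheodory metric $\dd$ via the triangle inequality, and then convert back to $\norma{\cdot}$ using the two-sided comparison from Lemma~\ref{lem: contraction}. The only discrepancy is the harmless numerical constant (your bookkeeping, like the paper's, really yields $4\de^2\e^{-2}$ rather than $2\de^2\e^{-2}$), which does not affect anything downstream.
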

\begin{proof}
	Since $M'\in\Mp_N$, we have $z+\Xi(M')\in\M_N(\eta,\infty)$. The last sentence in Lemma \ref{lem: closed} yields $F(M')\in \M_N(0,\eta^{-1})$. It follows that $M'=F(M')+R\in \M_N(0,\de/2)$, which implies $F(M')\in\M_N(\e,\de/4)$, therefore $M'\in\M_N(\e/2,\de/2)$. Now that both $M'-R$ and $M$ are in the domain $\M_N(0,\de/2)$, we can take the distance between them
	\ben
		\dd (M,M'-R)=\dd(F(M),F(M'))\,.
	\ee
	By triangle inequality and the fact that $F$ is a strict contraction we have
	\ben
		\dd(M,M')-\dd(M',M'-R)\leq (1+\e/\de)^{-1}\dd(M,M')\,.
	\ee
	In other words,
	\ben
		\dd(M,M')\leq 2\de \e^{-1}\dd(M',M'-R)\,,
	\ee
	which implies, by the equivalence of $\dd$ and $\norma{\cdot}$,
	\ben
		\norma{M-M'}\leq 2\de^2\e^{-2}\norma{R}\,.
	\ee
\end{proof}

Theorem \ref{thm: stable} is not strong enough for our purpose, because we do not have an estimate on $\norma{R}$ , but only have an estimate for $\norma{R}_\infty:=\sup_{i,j}\absa{R_{ij}}$  as we did in Lemma \ref{lem: sce app}.   Thus we need a stronger stability theorem that only assumes the smallness of $\norma{R}_\infty$.  To do so, we need some other properties of the solution $M$.  Note that $\Xi(M)$ is a band matrix with band width $2K+1$ and $F(M)$ has a bounded condition number $\norma{F(M)}\norma{F(M)^{-1}}$,  we can prove the off-diagonal decay of $M$.   

\begin{theorem}\label{thm: decay}
Assume that $M$ solves equation \eqref{eq: sce n}. Let $\kappa(M)=\norma{M}\norma{M^{-1}}$ and $\alpha(M)= \left(\frac{\kappa(M)-1}{\kappa(M)+1}\right)^{\frac{2}{2K+1}}$.  Then,
\ben
	\absa{M_{ij}}\leq 2(2K+1)\kappa(M)\alpha(M)^{(\absa{i-j}-K)_+}\,.
\ee
\end{theorem}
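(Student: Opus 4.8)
The plan is to derive the bound from the bandedness of $M^{-1}$ together with the classical estimate on the off-diagonal decay of inverses of banded matrices (Demko--Moss--Smith), adapted to the present non-Hermitian setting. Recall from the discussion after \eqref{def: F} that $M^{-1}=-z-\Xi(M)$ and that $\Xi(M)$ is a band matrix of band width $2K+1$ --- a consequence of the $K$-dependence, which forces $\xi_{ijkl}=0$ whenever $\absa{i-k}>K$. Hence $(M^{-1})_{ij}=0$ for $\absa{i-j}>K$, and the whole task is to convert this banded structure of $M^{-1}$ into exponential decay of the entries of $M=(M^{-1})^{-1}$, at a rate governed by $\kappa(M)=\norm{M}\norm{M^{-1}}$.

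Since $M^{-1}$ need not be normal, I would first pass to the Hermitian positive definite matrix $T:=(M^{-1})^*M^{-1}$. Its eigenvalues are the squared singular values of $M^{-1}$, so all eigenvalues of $T$ lie in $[\norm{M}^{-2},\norm{M^{-1}}^2]$ and the condition number of $T$ equals $\kappa(M)^2$; moreover, being a product of two matrices of band width $2K+1$, $T$ satisfies $T_{ij}=0$ for $\absa{i-j}>2K$. The analytic input is the Chebyshev estimate for polynomial approximation of the reciprocal: for every integer $k\geq 0$ there is a polynomial $p_k$ of degree $k$ with $\sup_{x\in[a,b]}\absa{x^{-1}-p_k(x)}\leq \tfrac{2}{a}\big(\tfrac{\sqrt{b/a}-1}{\sqrt{b/a}+1}\big)^{k+1}$. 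Applying the spectral theorem for $T$ with $[a,b]=[\norm{M}^{-2},\norm{M^{-1}}^2]$ gives $\norm{T^{-1}-p_k(T)}\leq 2\norm{M}^2\big(\tfrac{\kappa(M)-1}{\kappa(M)+1}\big)^{k+1}$. Since $p_k(T)$ is a degree-$k$ polynomial in a matrix of band width $4K+1$, it vanishes on entries with $\absa{i-j}>2Kk$; for such $(i,j)$ the entry $(T^{-1})_{ij}$ therefore coincides with $(T^{-1}-p_k(T))_{ij}$ and is bounded by the operator-norm estimate. Taking $k$ as large as this band constraint allows then produces exponential decay of the entries of $T^{-1}$ at a rate which is a fixed power of $\tfrac{\kappa(M)-1}{\kappa(M)+1}$.

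To finish, I would recover $M$ from the identity $M=T^{-1}(M^{-1})^*$ (note $T^{-1}=MM^*$): since $(M^{-1})^*$ is banded of half-width $K$, one has $M_{ij}=\sum_{l:\,\absa{l-j}\leq K}(T^{-1})_{il}\,\overline{(M^{-1})_{jl}}$, so $\absa{M_{ij}}\leq (2K+1)\norm{M^{-1}}\max_{\absa{l-j}\leq K}\absa{(T^{-1})_{il}}$. Because $\absa{i-l}\geq\absa{i-j}-K$ for these $l$, this transfers the decay of $T^{-1}$ to $M$ after shifting the distance by $K$ --- which is exactly the $(\absa{i-j}-K)_+$ in the statement --- and contributes the factor $(2K+1)\norm{M^{-1}}$ to the prefactor. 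Assembling the three estimates and optimizing the degree $k$ as a function of $\absa{i-j}$, $K$ and $\kappa(M)$ yields a bound of the asserted exponential form, with $\alpha(M)$ the stated power of $\tfrac{\kappa(M)-1}{\kappa(M)+1}$ and a prefactor of the stated order.

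The main obstacle is precisely the non-normality of $M^{-1}$: it is what forces the detour through $T=(M^{-1})^*M^{-1}$, and one must keep careful track of how squaring the condition number, doubling the band width, and the final multiplication by the banded factor $(M^{-1})^*$ interact, so as to land on the precise exponent $2/(2K+1)$ and prefactor $2(2K+1)\kappa(M)$ (a cleaner choice of the symmetrizing matrix, or working with the Hermitian dilation $\bma 0 & M^{-1}\\ (M^{-1})^* & 0\ema$ in an interleaved index ordering, may be what makes the band width $2K+1$ enter directly). Everything else --- the Chebyshev approximation bound and the elementary fact that powers of a band matrix grow the band linearly --- is entirely standard.
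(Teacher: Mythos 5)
Your proposal is correct and is essentially the paper's own argument: the paper reduces the theorem to Lemma \ref{lem: demko}, whose proof is exactly your symmetrization (there $M=(M^{-1})^{*}\bigl(M^{-1}(M^{-1})^{*}\bigr)^{-1}$, you use the transposed variant $M=\bigl((M^{-1})^{*}M^{-1}\bigr)^{-1}(M^{-1})^{*}$) combined with the off-diagonal decay of the inverse of the positive definite banded factor, which the paper cites as Theorem \ref{thm: demko} while you re-derive it via Chebyshev approximation of $1/x$ --- which is precisely how Demko--Moss--Smith prove it. Your worry about landing on the exact constants is legitimate but immaterial: your bookkeeping gives the rate $\bigl(\tfrac{\kappa(M)-1}{\kappa(M)+1}\bigr)^{1/(2K)}$ per unit distance and an extra factor of $\norm{M}$ in the prefactor, and the paper's own application of Theorem \ref{thm: demko} to $M^{-1}(M^{-1})^{*}$ (whose band is twice as wide as that of $M^{-1}$, and whose inverse has norm $\norm{M}^{2}$) in fact yields the same constants rather than the slightly stronger ones displayed in the statement; since only qualitative exponential decay at a rate depending on $\kappa(M)$ and $K$ is used downstream (e.g.\ in Theorems \ref{thm: converge} and \ref{thm: converge local}), this discrepancy does not matter.
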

\begin{proof}
This theorem is a immediate consequence of Lemma \ref{lem: demko} below,  which is a corollary of Theorem 2.4 in \cite{Demko84}.
\end{proof}

\begin{lemma}\label{lem: demko}
Let $A$ be an invertible finite or infinite matrix, which is $K$-banded in the sense that $A(i,j)=0$ given $\absa{i-j}>K$. Then,
\ben
	\absa{A^{-1}(i,j)}\leq 2(2K+1)\kappa(A) \alpha^{(\absa{i-j}-K)_+}\,.
\ee
Here $\kappa(A)=\norma{A}\norma{A^{-1}}$ and 
\ben
	\alpha=\left(\frac{\kappa(A)-1}{\kappa(A)+1}\right)^{\frac{2}{2K+1}}\,.
\ee
\end{lemma}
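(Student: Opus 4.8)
\emph{Proof plan.} This is Theorem~2.4 of \cite{Demko84} after translating the notation, and the scheme I would follow is the classical Chebyshev-polynomial argument. The first step reduces a general invertible $K$-banded matrix $A$ to a Hermitian positive-definite one: set $C:=AA^*$. Then $C$ is Hermitian positive definite, it is $2K$-banded (a product of a $K$-banded matrix with its adjoint), and $\mathrm{spec}(C)\subseteq[\,\norma{A^{-1}}^{-2},\norma{A}^2\,]$, so $\kappa(C)=\kappa(A)^2$. Since $A^{-1}=A^*C^{-1}$ and $A^*$ is $K$-banded, it is enough to prove geometric off-diagonal decay for $C^{-1}$ and then transfer it: in $(A^{-1})_{ij}=\sum_k \overline{A_{ki}}\,(C^{-1})_{kj}$ only the at most $2K+1$ indices $k$ with $\absa{k-i}\leq K$ contribute, and for each of them $\absa{k-j}\geq(\absa{i-j}-K)_+$; this is the origin of both the factor $2K+1$ and the shifted exponent $(\absa{i-j}-K)_+$ in the statement.

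To control $(C^{-1})_{kl}$ for a Hermitian positive-definite $C$ with $\mathrm{spec}(C)\subseteq[a_0,b_0]$ and $a_0=\norma{A^{-1}}^{-2}$, I would approximate $t\mapsto t^{-1}$ uniformly on $[a_0,b_0]$ by polynomials. For each $n$ choose $r_n$ of degree $n$ so that $1-t\,r_n(t)$ is the polynomial of degree $n+1$ that takes the value $1$ at the origin and has minimal sup-norm on $[a_0,b_0]$ (a rescaled Chebyshev polynomial); its extremal property gives
\ben
\sup_{t\in[a_0,b_0]}\absa{t^{-1}-r_n(t)}\ \leq\ \frac{2}{a_0}\left(\frac{\sqrt{\kappa(C)}-1}{\sqrt{\kappa(C)}+1}\right)^{n+1}
\ =\ 2\,\norma{A^{-1}}^2\left(\frac{\kappa(A)-1}{\kappa(A)+1}\right)^{n+1}.
\ee
Since $C$ is self-adjoint, the spectral theorem promotes this to the operator-norm bound $\norma{C^{-1}-r_n(C)}\leq 2\norma{A^{-1}}^2\big(\tfrac{\kappa(A)-1}{\kappa(A)+1}\big)^{n+1}$, while $r_n(C)$, a degree-$n$ polynomial in a $2K$-banded matrix, is $2nK$-banded. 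Hence $(C^{-1})_{kl}=(C^{-1}-r_n(C))_{kl}$ whenever $\absa{k-l}>2nK$, so choosing $n$ as large as this constraint allows ($n\sim\absa{k-l}/K$) yields geometric decay of $(C^{-1})_{kl}$. Running this optimization together with the transfer to $A^{-1}$ above and keeping track of constants as in \cite{Demko84} produces precisely the base $\alpha=\big(\tfrac{\kappa(A)-1}{\kappa(A)+1}\big)^{2/(2K+1)}$ and a prefactor of the form stated in the lemma. The infinite-dimensional case requires no change: polynomials of a banded operator on $l^2(\Z)$ are banded by the same rule, and the functional-calculus step is legitimate because $C=AA^*$ is self-adjoint and bounded.

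There is no genuine obstacle, since the statement is quoted from \cite{Demko84} and the argument above is essentially its proof. The only delicate point is the bookkeeping of the constants: how the band width doubles under $A\mapsto AA^*$ and is then multiplied by the polynomial degree, how stripping off the extra $A^*$ factor contributes the $2K+1$ and shifts the decay exponent to $(\absa{i-j}-K)_+$, and how the Chebyshev optimization must be organized to land on exactly the base $\big(\tfrac{\kappa(A)-1}{\kappa(A)+1}\big)^{2/(2K+1)}$ rather than a slightly weaker exponent. Since the application (Theorem~\ref{thm: decay}) only uses exponential off-diagonal decay at a rate controlled by the condition number, one could equally well invoke \cite{Demko84} directly.
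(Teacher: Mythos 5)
Your proposal is correct and follows essentially the same route as the paper: the reduction $A^{-1}=A^*(AA^*)^{-1}$, the application of the Demko--Moss--Smith decay bound to the positive-definite matrix $AA^*$ with $\kappa(AA^*)\leq\kappa(A)^2$, and the transfer of the decay through the $\leq 2K+1$ nonzero entries of $A^*$ in each row, which produces the prefactor $2(2K+1)$ and the shifted exponent $(\absa{i-j}-K)_+$. The only difference is that you additionally sketch the Chebyshev-approximation proof of the positive-definite case, which the paper simply quotes as Theorem 2.4 of \cite{Demko84}; as you note, the exact base $\alpha$ depends on band-width bookkeeping conventions, but only exponential decay at a condition-number-controlled rate is used downstream.
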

\begin{proof}
By the simple observation that $A^{-1}=A^*(AA^*)^{-1}$ we have
\ben
	A^{-1}(i,j)=\sum_{\absa{i-k}\leq K} A^*(i,k)(AA^*)^{-1}(k,j)\,.
\ee
Theorem \ref{thm: demko}, which we state below, implies that
\ben
	\absa{(A^*A)^{-1}(k,j)}\leq 2\norma{A^{-1}} \left(\frac{\sqrt{\kappa(AA^*)}-1}{\sqrt{\kappa(AA^*)}+1}\right)^{\frac{2\absa{k-j}}{2K+1}}\,.
\ee
Combing the above estimates with the trivial bound $\absa{A^*(i,k)}\leq \norma{A}$ and that $\kappa(AA^*)\leq \kappa(A)^2$, we have 
\ben
	\absa{A^{-1}(i,j)}\leq \sum_{\absa{k-i}\leq K}2\kappa(A) \alpha^{\absa{k-j}}\leq 2(2K+1) \kappa(A)\alpha^{(\absa{i-j}-K)_+}\,.
\ee
where $\alpha=\left(\frac{\kappa(A)-1}{\kappa(A)+1}\right)^{\frac{2}{2K+1}}$.
\end{proof}
We state below the theorem that we used in the proof.  The original theorem consists of two cases, where the matrix $A$ is positive definite or $A$ is not positive definite.  Our situation is the second case, but the result there is not as strong as we need, thus we do not directly apply it. We only state the first case of original theorem for our purpose.
\begin{theorem}[Demko-Moss-Smith \cite{Demko84}]\label{thm: demko}
Let $A$ and $A^{-1}$ be in $B(l^2(S))$ where $S=\{1,\dots,n\}$, $\Z_+$ or $\Z$. Then if $A$ is positive definite and has band width $m$ we have 
\ben
	\absa{A^{-1}(i,j)}\leq C\la^{\absa{i-j}}\,,
\ee
where 
\ben
	\la=\left(\frac{\sqrt{\kappa(A)-1}}{\sqrt{\kappa(A)+1}}\right)^{2/m}\,,
\ee
and 
\ben
	C=\norma{A^{-1}}\max\{1,(1+\sqrt{\kappa(A)})^2/(2\kappa(A))\}\,.
\ee
\end{theorem}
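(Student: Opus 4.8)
The plan is to run the classical Chebyshev polynomial-approximation argument that underlies Theorem~2.4 of \cite{Demko84}. Write $a=\norm{A^{-1}}^{-1}=\min\mathrm{spec}(A)>0$ and $b=\norm{A}=\max\mathrm{spec}(A)$, so that $\mathrm{spec}(A)\subset[a,b]$ and $\kappa(A)=b/a$. Everything rests on two observations. First, since $A$ is self-adjoint and positive definite, the spectral theorem gives, for any polynomial $p$,
\ben
	\norm{A^{-1}-p(A)}=\sup_{\la\in\mathrm{spec}(A)}\absa{\la^{-1}-p(\la)}\leq\sup_{x\in[a,b]}\absa{x^{-1}-p(x)}\,.
\ee
Second, if $A$ has band width $m$ (i.e. $A(r,s)=0$ for $\absa{r-s}>\tfrac{m-1}{2}$), then $A^{j}$ has band width at most $j(m-1)+1$, so $\big(p(A)\big)(r,s)=0$ whenever $\absa{r-s}>\tfrac{m-1}{2}\deg p$; in particular, for a pair of indices $i\neq j$ lying outside this band one has $A^{-1}(i,j)=\big(A^{-1}-p(A)\big)(i,j)$, which is then controlled by the first observation.

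Next I would invoke the relevant extremal-polynomial estimate. For the problem $e_k:=\min\{\,\max_{x\in[a,b]}\absa{q(x)}:\deg q\leq k,\ q(0)=1\,\}$, the minimizer $q_k$ is a rescaled, shifted Chebyshev polynomial, and $e_k=\absa{T_k\!\big(\tfrac{b+a}{b-a}\big)}^{-1}=\tfrac{2\la_0^{k}}{1+\la_0^{2k}}\leq 2\la_0^{k}$, where $\la_0:=\tfrac{\sqrt{\kappa(A)}-1}{\sqrt{\kappa(A)}+1}$. Because $q_k(0)=1$, the function $p_{k-1}(x):=\big(1-q_k(x)\big)/x$ is a genuine polynomial of degree $k-1$, and on $[a,b]$ one has $\absa{x^{-1}-p_{k-1}(x)}=\absa{q_k(x)}/x\leq a^{-1}e_k\leq 2\norm{A^{-1}}\la_0^{k}$. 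Feeding this into the first observation gives $\norm{A^{-1}-p_{k-1}(A)}\leq 2\norm{A^{-1}}\la_0^{k}$.

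Now I would choose the degree adaptively. Given $i\neq j$, let $k$ be the largest integer with $\tfrac{m-1}{2}(k-1)<\absa{i-j}$; then $p_{k-1}(A)$ vanishes at $(i,j)$, so
\ben
	\absa{A^{-1}(i,j)}=\absa{\big(A^{-1}-p_{k-1}(A)\big)(i,j)}\leq\norm{A^{-1}-p_{k-1}(A)}\leq 2\norm{A^{-1}}\la_0^{k}\,.
\ee
Maximality of $k$ forces $\tfrac{m-1}{2}k\geq\absa{i-j}$, hence $k\geq\tfrac{2\absa{i-j}}{m-1}\geq\tfrac{2\absa{i-j}}{m}$, and therefore $\la_0^{k}\leq\big(\la_0^{2/m}\big)^{\absa{i-j}}=\la^{\absa{i-j}}$ with $\la=\la_0^{2/m}$, exactly the rate in the statement. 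This already proves the decay with the crude constant $2\norm{A^{-1}}$; recovering the sharp $C=\norm{A^{-1}}\max\{1,(1+\sqrt{\kappa(A)})^{2}/(2\kappa(A))\}$ is then pure bookkeeping — one keeps the exact value $e_k=2\la_0^{k}/(1+\la_0^{2k})$ instead of the bound $2\la_0^{k}$, treats the small-$\absa{i-j}$ range (where $k$ is forced to be $1$ and $p_0(A)$ is a multiple of the identity) by hand, and is careful about the rounding implicit in the choice of $k$.

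I expect the only genuinely delicate point to be this final constant-chasing, together with nailing down the precise band-width normalization so that the exponent emerges as $2/m$ and not $1/m$; the conceptual content is just the interplay between uniform polynomial approximation of $x\mapsto x^{-1}$ on the spectrum and the stability of bandedness under taking low-degree polynomials. Note also that in the application (Lemma~\ref{lem: demko}) the theorem is invoked only with $A$ replaced by the positive definite matrix $AA^{*}$, so no version of \cite{Demko84} beyond the positive definite case stated here is needed.
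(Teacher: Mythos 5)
This theorem is not proved in the paper at all: it is quoted verbatim (as ``the first case of the original theorem'') from \cite{Demko84}, so there is no in-paper argument to compare against. Your Chebyshev-approximation proof is exactly the argument of that reference, and it is correct: the spectral-theorem reduction to uniform approximation of $x\mapsto x^{-1}$ on $[a,b]$, the identity $e_k=2\la_0^k/(1+\la_0^{2k})$ with $\la_0=(\sqrt{\kappa}-1)/(\sqrt{\kappa}+1)$ (which, note, is the intended rate --- the displayed $\sqrt{\kappa(A)-1}/\sqrt{\kappa(A)+1}$ in the statement is a typo), the passage to $p_{k-1}(x)=(1-q_k(x))/x$, and the adaptive choice of degree all check out. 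The one place you fall short of the literal statement is the constant: your argument yields the prefactor $2\norma{A^{-1}}$ rather than $C=\norma{A^{-1}}\max\{1,(1+\sqrt{\kappa})^2/(2\kappa)\}$. Since $(1+\sqrt{\kappa})^2/(2\kappa)\leq 2$ for all $\kappa\geq 1$, your constant dominates $C$, so you prove a marginally weaker bound; this is harmless here because the only use of the theorem, in Lemma \ref{lem: demko}, already inserts the factor $2\norma{A^{-1}}$. You are also right to flag the band-width normalization as the delicate point: the exponent $2/m$ corresponds to the convention of \cite{Demko84} in which ``$m$-banded'' refers to the total width of the band (entries vanish for $\absa{i-j}>m/2$), and under your slightly different convention the same computation still delivers $k\geq 2\absa{i-j}/m$, so the claimed rate follows.
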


Besides the off-diagonal decay of $M$, another important observation is that the map $\Xi$ has a 'mollifying effect', as illustrated by Lemma \ref{lem: mollify} below. This lemma, although fairly simple, is very important because we will use it over and over again in the proof of stability.  In the sequel, $\norma{A}_\infty:=\sup_{i,j}\absa{A_{ij}}$ for any $A\in\C^{N\times N}$. 

\begin{lemma}\label{lem: mollify}
	For any $A\in\C^{N\times N}$, $\Xi(A)$ is a band matrix with band width $(2K+1)$ and satisfies $\norma{\Xi(A)}_\infty \leq (2K+1)\norma{A}$.
\end{lemma}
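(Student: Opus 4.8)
The plan is to read both assertions off the defining formula $(\Xi(A))_{ik}=\tfrac{1}{N}\sum_{j,l}\xi_{ijkl}A_{jl}$ from \eqref{def: Xi}, using only two elementary properties of the tensor $\xi$ that I would record first. Property one: $\xi_{ijkl}=0$ whenever $\absa{i-k}>K$ or $\absa{j-l}>K$. For the genuine entries ($i\le j$, $k\le l$) this is immediate from $K$-dependence together with $\E{x_{ij}}=\E{x_{kl}}=0$; the artificially extended entries are either zero by definition or a relabelling $\xi_{ijkl}=\xi_{jilk}=\E{x_{ji}x_{lk}}$ of a genuine one, so the same reasoning applies. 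Property two: $\absa{\xi_{ijkl}}\le 1$; for genuine entries this is Cauchy--Schwarz, $\absa{\E{x_{ij}x_{kl}}}\le\sqrt{\E{x_{ij}^2}\,\E{x_{kl}^2}}\le 1$ since $\var{x_{ij}}\le\mu_2^2=1$, and the extended entries are handled as before.

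Given property one, the band structure is immediate: if $\absa{i-k}>K$ then every summand in $(\Xi(A))_{ik}$ vanishes, so $(\Xi(A))_{ik}=0$, which means $\Xi(A)$ is supported on the diagonals $-K,\dots,K$ and has band width $(2K+1)$.

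For the norm bound I would fix $i,k$ with $\absa{i-k}\le K$ (the entry being zero otherwise) and estimate the double sum crudely: for each of the $N$ values of $j$, property one confines the inner sum to the at most $2K+1$ indices $l$ with $\absa{l-j}\le K$, and each such term is bounded by $\absa{\xi_{ijkl}}\,\absa{A_{jl}}\le\norma{A}$ using property two and $\absa{A_{jl}}=\absa{e_j^*Ae_l}\le\norma{A}$. Hence $\absa{(\Xi(A))_{ik}}\le\tfrac{1}{N}\cdot N(2K+1)\norma{A}=(2K+1)\norma{A}$, and taking the maximum over $i,k$ gives $\norma{\Xi(A)}_\infty\le(2K+1)\norma{A}$.

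There is no real obstacle here; the only point to be careful about is that the vanishing and the bound $\absa{\xi_{ijkl}}\le 1$ are preserved by the passage to the extended tensor, which is exactly why that extension was chosen to be either zero or a relabelling of a genuine entry. As a remark, a slightly sharper constant is available via the representation $\Xi(A)=\E{\hat H^*A\hat H}$ from \eqref{eq: xi rep}: writing $(\Xi(A))_{ik}=\E{(\hat He_i)^*A(\hat He_k)}$ and applying Cauchy--Schwarz both in $\C^N$ and in expectation gives $\absa{(\Xi(A))_{ik}}\le\norma{A}\sqrt{\E{\norma{\hat He_i}^2}\,\E{\norma{\hat He_k}^2}}$, and since $\E{\norma{\hat He_i}^2}=\tfrac{1}{N}\sum_j\E{\absa{x_{ji}}^2}\le 1$ one even gets $\norma{\Xi(A)}_\infty\le\norma{A}$; the weaker constant $2K+1$ is all that is needed below.
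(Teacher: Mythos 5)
Your proof is correct and takes essentially the same route as the paper's: the band structure follows from the vanishing of $\xi_{ijkl}$ when $\absa{i-k}>K$, and the entrywise bound from $\absa{\xi_{ijkl}}\le 1$ together with the restriction $\absa{j-l}\le K$ in the sum, followed by $\norm{A}_\infty\le\norm{A}$. Your extra care with the extended tensor and your closing remark on the sharper constant via $\Xi(A)=\E{\hat H^*A\hat H}$ are both valid but not needed for the stated bound.
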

\begin{proof}
The claim that $\Xi(A)$ is a band matrix follows from the definition of the map $\Xi$.  By the assumption that $\var{x_{ij}}\leq 1$, we have $\absa{\xi_{ijkl}}\leq 1$, which implies
\ben
	\absa{(\Xi(A))_{ik}}\leq \tfrac{1}{N} \sum_{j,l}\absa{\xi_{ijkl}}\absa{A_{jl}}\leq \tfrac{1}{N}\sum_{\absa{j-l}\leq K} \norm{A}_\infty\,.
\ee
The conclusion of the lemma follows from the fact that $\norm{A}_\infty\leq\norm{A}$.
\end{proof}
Once Theorem \ref{thm: decay} and Lemma \ref{lem: mollify} are stablished, we are ready to prove a stronger stability theorem of equation \eqref{eq: sce n}, assuming only the smallness of $\norma{R}_\infty$.

\begin{theorem}\label{thm: stable 1}
Let $M$ solve equation \eqref{eq: sce n} and $M'\in \Mp_N$ solve the following perturbed equation
\ben
	M'=F(M')+R\,,
\ee
then there is an $\eps_z$ such that if $\norma{M'-M}_\infty\vee\norma{R}_\infty\leq \eps_z$, we have
\ben
	\norma{M'-M}_\infty \leq c_z\norma{R}_\infty\,
\ee
\end{theorem}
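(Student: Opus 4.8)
The plan is to reduce the $\norm{\cdot}_\infty$-stability to the $\norm{\cdot}$-stability of Theorem \ref{thm: stable} by showing that a perturbation which is small in $\norm{\cdot}_\infty$ automatically has small $\norm{\cdot}$-norm, \emph{after} one exploits the off-diagonal decay of the solutions supplied by Theorem \ref{thm: decay}. The key point is that $R = M' - F(M')$ need not have small operator norm a priori, but since $M$ and $M'$ both solve (perturbed) self-consistent equations of the form $M = (-z - \Xi(M))^{-1}$, their off-diagonal entries decay exponentially, and the difference $D := M' - M$ inherits this decay --- so $\norm{D}$ is controlled by $\norm{D}_\infty$ times a constant depending only on the decay rate, i.e. on $z$ and $K$.

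\textbf{Key steps.} First I would establish that $M'$, like $M$, has exponentially decaying off-diagonal entries. Arguing as in the proof of Theorem \ref{thm: solvable}, $M' = F(M') + R \in \M_N(0,\de/2)$ once $\norm{R}_\infty$ (hence $\norm{R}$, via $\norm{R}\le N\norm{R}_\infty$ --- but better, via the decay argument below) is small; more carefully, since $M'\in\Mp_N$ we get $z+\Xi(M')\in\M_N(\eta,\infty)$, and writing $(M')^{-1} = -(z+\Xi(M')) + (\text{correction from }R)$ one sees $M'$ is the inverse of a matrix that is a $K$-banded matrix plus a small perturbation, so $\kappa(M')$ is bounded in terms of $z$ and Lemma \ref{lem: demko} (or Theorem \ref{thm: decay} applied to the equation $(M')^{-1}+z+\Xi(M')=\text{small}$) gives $\absa{M'_{ij}} \le C_z\,\alpha_z^{(\absa{i-j}-K)_+}$ with $\alpha_z<1$. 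Second, combining this with Theorem \ref{thm: decay} for $M$ itself, the difference $D = M'-M$ satisfies $\absa{D_{ij}} \le 2C_z\,\alpha_z^{(\absa{i-j}-K)_+}$, and also $\absa{D_{ij}}\le \norm{D}_\infty$; interpolating, $\absa{D_{ij}} \le \norm{D}_\infty^{1/2}(2C_z\alpha_z^{(\absa{i-j}-K)_+})^{1/2}$, which is exponentially decaying with a coefficient $\sqrt{\norm{D}_\infty}$. Third, a banded-decay matrix is bounded in operator norm by the sum of the (geometrically decaying) diagonal-band sup-norms (Schur test / Young's inequality for the $\Z$-convolution structure), yielding $\norm{D} \le c_z\,\norm{D}_\infty^{1/2}$, and similarly $\norm{R} \le c_z\,\norm{R}_\infty$ once we note $R = F(M) - F(M') + (M'-M)$, where $F(M)-F(M') = M(\Xi(M')-\Xi(M))M' + \dots$; using Lemma \ref{lem: mollify}, $\Xi(M'-M)$ is $K$-banded with $\norm{\Xi(D)}_\infty \le (2K+1)\norm{D}$, so the resolvent-difference expansion for $F(M)-F(M')$ is again a product of decaying matrices and is controlled by $\norm{D}$ in operator norm, giving $\norm{R}\le c_z\norm{D}$. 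Fourth, I apply Theorem \ref{thm: stable}: provided $\norm{R}\le\e/2$ (guaranteed once $\eps_z$ is chosen small, since $\norm{R}\le c_z\norm{D}\le c_z' \norm{D}_\infty^{1/2}$ and $\norm{D}_\infty\le\eps_z$), we get $\norm{D} = \norm{M'-M} \le 2\de^2\e^{-2}\norm{R}$. Finally I close the loop: $\norm{R}_\infty = \norm{M'-F(M')}_\infty$ and I want a bound $\norm{D}_\infty\le c_z\norm{R}_\infty$, not in terms of $\norm{R}$. Here I use $\norm{D}_\infty\le\norm{D}\le 2\de^2\e^{-2}\norm{R}$, and then bound $\norm{R}$ by $\norm{R}_\infty$ using the decay of $R$ --- but $R$ itself need not decay off-diagonally. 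The fix is to instead bound $\norm{D}$ directly: write the fixed-point identities $D = F(M')-F(M)+R$ and expand $F(M')-F(M) = -M'\Xi(D)M + (\text{higher order in }D)$, so $D + M'\Xi(D)M = R + O(\norm{D}^2)$; this is a linear equation for $D$ whose solution operator is exactly the stability operator, bounded by $2\de^2\e^{-2}$ in $\norm{\cdot}\to\norm{\cdot}$ norm, while the right side is $R + O(\norm{D}_\infty^{1/2}\norm{D})$. Since $M'\Xi(D)M$ with $\Xi(D)$ $K$-banded and $M',M$ decaying means $M'\Xi(D)M$ has decaying entries with sup-norm $\lesssim_z \norm{D}_\infty$, the whole equation can be read at the level of $\norm{\cdot}_\infty$ after passing through the operator-norm stability bound once, delivering $\norm{D}_\infty\le\norm{D}\le c_z(\norm{R}_\infty + \norm{D}_\infty^{3/2})$, and absorbing the last term for $\norm{D}_\infty\le\eps_z$ small gives the claim.

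\textbf{Main obstacle.} The crux is the passage from $\norm{R}_\infty$ to an operator-norm-controllable object: $R$ is only assumed small entrywise and need not have banded or decaying structure, so Theorem \ref{thm: stable} is not directly applicable. The resolution --- and the delicate point to get right --- is that one never needs $\norm{R}$ small per se; rather, one rewrites the perturbed equation as a \emph{linear} equation for $D = M'-M$ of the form $(I + \mathcal{L})[D] = R + (\text{quadratic in }D)$ where $\mathcal{L}[D] = M'\Xi(D)M$ is the linearization, $(I+\mathcal{L})^{-1}$ is bounded by the stability constant $2\de^2\e^{-2}$ of Theorem \ref{thm: stable}, and --- crucially --- $\mathcal{L}$ maps into matrices that are products of a $K$-banded matrix $\Xi(D)$ (Lemma \ref{lem: mollify}) with the exponentially-decaying $M, M'$ (Theorem \ref{thm: decay}), so that $\mathcal{L}[D]$ has entries decaying away from the diagonal with sup-norm $\lesssim_z \norm{D}_\infty$. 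Iterating, every term on the right side of the linear equation has controlled $\norm{\cdot}_\infty$, and a bootstrap on $\norm{D}_\infty$ --- using the quadratic smallness of the error when $\norm{D}_\infty \le \eps_z$ --- closes the estimate $\norm{D}_\infty \le c_z \norm{R}_\infty$. Getting the bookkeeping of the decay constants (which depend on $z$ through $\kappa(M), \kappa(M')$) consistent throughout, and verifying that $M'$ genuinely has the decay property when $R$ is only entrywise small, is the technically demanding part.
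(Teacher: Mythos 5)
There is a genuine gap, and it sits exactly at the point you yourself flag as the crux. Your plan ultimately routes the unstructured perturbation $R$ through the linearized stability operator: you write $(I+\mathcal L)[D]=R+(\text{quadratic})$ with $\mathcal L[D]=M'\Xi(D)M$ and invoke the bound $\norm{(I+\mathcal L)^{-1}}\leq 2\de^2\e^{-2}$ from Theorem \ref{thm: stable}. But that bound is an \emph{operator-norm} bound, and the source term $R$ has no banded or decaying structure, so $\norm{R}$ can be as large as $N\norm{R}_\infty$; applying $(I+\mathcal L)^{-1}$ to it yields nothing. Reading the equation ``at the level of $\norm{\cdot}_\infty$'' does not rescue this: one checks $\norm{\mathcal L[D]}_\infty\leq C_z\norm{D}_\infty$ with $C_z$ of order $\de^2 K^2$, which is not less than $1$, so $\mathcal L$ is not an $\norm{\cdot}_\infty$-contraction and the Neumann series cannot be summed entrywise. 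A secondary error feeds into this: your first step, that $M'$ itself decays exponentially off the diagonal, is false --- only $F(M')$ decays, while $M'=F(M')+R$ carries the arbitrary off-band entries of $R$ (e.g.\ $R_{ij}\equiv\eps_z$ for all $i,j$ is admissible), so the interpolation giving $\norm{D}\leq c_z\norm{D}_\infty^{1/2}$ also fails: the Schur-test row sums pick up a term of order $N\norm{R}_\infty^{1/2}\norm{D}_\infty^{1/2}$.

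The missing idea is a structural observation about $F$, not an analytic one about $\mathcal L$: since $\xi_{ijkl}=0$ unless $\absa{j-l}\leq K$, the map $\Xi$ --- hence $F$ --- depends only on the entries of its argument within the band $\absa{i-j}\leq K$. One therefore defines $\hat M'$ by $\hat M'_{ij}:=M'_{ij}$ for $\absa{i-j}\leq K$ and $\hat M'_{ij}:=F(M')_{ij}$ otherwise; then $F(\hat M')=F(M')$, so $\hat M'=F(\hat M')+\hat R$ where $\hat R_{ij}=R_{ij}\ind{\absa{i-j}\leq K}$ is the banded truncation of $R$. A $(2K+1)$-banded matrix with small entries has small operator norm, $\norm{\hat R}\leq(2K+1)\norm{R}_\infty$, so Theorem \ref{thm: stable} applies directly to give $\norm{\hat M'-M}\leq c_z\norm{R}_\infty$, and $\norm{M'-\hat M'}_\infty\leq\norm{R}_\infty$ trivially since the two agree on the band and differ by $R_{ij}$ off it. In effect the off-band part of $R$ never enters the fixed-point structure at all --- it is absorbed additively --- which is precisely why ``$\norm{R}$ small'' is never needed. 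Without this truncation (or an equivalent reformulation), the off-band part of $R$ cannot be controlled, and your argument does not close.
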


\begin{proof}

Let $\hat M'$ be defined as follows.  For $\absa{i-j}\leq K$ let $\hat M'_{ij}:=M'_{ij}$, otherwise let $\hat M'_{ij}:=F(M')_{ij}$.  Then $F(M')=F(\hat M')$, since the map $\Xi$ only depends on the near diagonal entries of $M'$. Therefore $\hat M'$ satisfies the equation
\ben
	\hat M'=F(\hat M')+ \hat R\,,
\ee
where $\hat R_{ij}=R_{ij}\ind{\absa{i-j}\leq K}$.  Now that $\hat R$ is a band matrix with band width $2K+1$, we have $\norm{\hat R}\leq (2K+1) \norma{R}_\infty$.  Therefore, when $\eps_z$ is small enough, Theorem \ref{thm: stable} implies that 
\be\label{eq: hat M}
	\norm{\hat M'-M}\leq c_z\norma{R}_\infty\,.
\ee
Therefore
\ben
	\norm{M'-M}_\infty\leq \norm{\hat M'-M'}_\infty+\norm{\hat M'-M}_\infty\leq c_z\norm{R}_\infty\,.
\ee
\end{proof}
\subsection{Solution to the limiting equation}\label{subsec: limit}
Recall that in Subsection \ref{subsec: sce} we defined $\cT=L^\infty([0,1),\cK)$, where $\cK$ is the space of bi-infinite sequences viewed as convolution operators on $l^2(\Z)$.  For the readers' convenience, we restate equation \eqref{eq: sce infty} below, recalling the definition \eqref{def: Psi} of $\Psi$.  The inverse in the space $\cT$ is defined in \eqref{def: inverse T}.
\ben
	m=(-z-\Psi(m))^{-1}\,.
\ee
We clarify some conventions here.  
\begin{definition}
For $f\in\cT$ and $\theta\in[0,1)$, we denote the norm of $f(\theta)$ in the space $\cK$ by $\norm{f(\theta)}$, while the notation $\norm{f}$ without the $\theta$ variable means the norm of $f$ in the space $\cT=L^\infty([0,1),\cK)$, i.e.,
\ben
	\norm{f}=\sup_{\theta\in[0,1)} \norm{f(\theta)}\,.
\ee
\end{definition}
%
%
%
%

Since we will also solve it by a fixed point argument, we define a map $\F$ on $\cT$ by
\ben
	\F(f):=(-z-\Psi(f))^{-1}\,.
\ee
The map $\F$ is not well-defined on the entire space, thus we introduce a subdomain of $\cT$ such that $\F$ is well-defined.  Recall that the inverse Fourier transform of $f\in\cT$ in the second variable is denoted by $\check f$. 
\begin{definition}
	 \[\cT_+:=\{f\in\cT: \inf_{\theta,\zeta\in[0,1) }\im \check f(\theta,\zeta)>0\}\]
	 and  \[\cT(\e,\de):=\{f\in\cT: \sup _{\theta,\zeta\in[0,1)} \absa{\check f(\theta,\zeta)}<\de, \inf_{\theta,\zeta\in[0,1)}\im \check f(\theta,\zeta)>\e\}\]
\end{definition}
It is still not obvious that $\F$ is well-defined on the domain $\cT_+$. Before showing that it is well-defined, we apply inverse Fourier transform to equation \eqref{eq: sce infty}. Denote $u(\theta,s):= \check m(\theta,s)$, then \eqref{eq: sce infty} becomes the following equation:
\be\label{eq: sce u}
	u(\theta,s)=(-z- Su(\theta,s))^{-1}\,,\forall (\theta,s)\in [0,1)^2\,.
\ee
where $S$ is an operator given by 
\be\label{def: S}
Su(\theta,s):=\iint \hat \psi(\theta,\phi,s,t) u(\phi,t)\dd \phi\dd t,
\ee
 and 
\ben
	\hat \psi(\theta,\phi,s,t):=\sum_{k,l} \psi(\theta,\phi,k,l) e^{\ii 2\pi(s k-t l)}\,.
\ee
From the definition of $S$, it is easy to see that for any $f\in\cT$, the inverse Fourier transform of $\F(f)$ satisfies
\be\label{eq: check F}
	\check{\F(f)}(\theta,s)=(-z-(S\check f)(\theta,s))^{-1}\,,\forall (\theta,s)\in[0,1)^2\,.
\ee
An important observation is that $\hat \psi(\theta,\phi,s,t)$ is non-negative; in particular, it has a positive lower bound if $\xi$ is positive definite in the sense of Definition \ref{def: pd}, as is justified by the following lemma:

\begin{lemma}\label{lem: positivity}
Assume that $\hat \psi$ is defined as above, then $0\leq \hat\psi(\theta,\phi,s,t) \leq K^2$. If $\xi$ is positive definite with lower bound $c_0$ in the sense of Definition \ref{def: pd}, then $\hat\psi (\theta,\phi,s,t)\geq c_0$ for all $(\theta,\phi,s,t)\in [0,1)^4$.
In particular, $\Psi$ is an bounded operator on $\cT$ satisfying $\norm{\Psi(f)}\leq K^2\norm{f}$ for any $f\in\cT$.
\end{lemma}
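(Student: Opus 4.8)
The two assertions are of unequal depth: the bound $\hat\psi\leq K^2$ and the operator estimate $\norm{\Psi(f)}\leq K^2\norm{f}$ are elementary, while the substance is nonnegativity of $\hat\psi$ and, under Definition \ref{def: pd}, the lower bound $\hat\psi\geq c_0$. I would dispose of the easy half first. Since the $(x_{ij})$ are $K$-dependent, $\xi_{ijkl}=0$ whenever $\absa{i-k}>K$ or $\absa{j-l}>K$, so letting $N\to\infty$ in $\xi\wo N_{ijkl}=\psi(i/N,j/N,k-i,l-j)+\O{N^{-1}}$ gives $\psi(\theta,\phi,k,l)=0$ unless $\absa k,\absa l\leq K$, and $\absa{\psi(\theta,\phi,k,l)}\leq1$ by Cauchy--Schwarz together with $\var{x_{ij}}\leq1$. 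Hence $\hat\psi(\theta,\phi,\cdot,\cdot)$ is a trigonometric polynomial with at most $(2K+1)^2$ coefficients of modulus $\leq1$, so it is bounded by a $K$-dependent constant (the displayed $K^2$ is not optimized). The operator bound then follows from $\norm{\Psi(f)}_\cT=\norm{\check{\Psi(f)}}_\infty$ together with the Fourier identity $\check{\Psi(f)}(\theta,s)=(S\check f)(\theta,s)=\iint\hat\psi(\theta,\phi,s,t)\check f(\phi,t)\dd\phi\,\dd t$ (cf. \eqref{def: S}, \eqref{eq: check F}): since $\iint\dd\phi\,\dd t=1$, one gets $\absa{(S\check f)(\theta,s)}\leq\norm{\hat\psi}_\infty\norm{\check f}_\infty\leq K^2\norm f_\cT$.

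The idea for the main claim is to realize $\hat\psi(\theta,\phi,s,t)$ as the spectral density at frequency $(s,t)$ of the field $(x_{ij})$ localized near macroscopic position $(\theta,\phi)$, and to inherit its sign from the positive semidefiniteness (resp. $c_0$-positive definiteness) of the covariance matrices $\Sigma\wo N$. Fix $(\theta,\phi,s,t)$ and let $I_\alpha\ni\theta$, $I_{\alpha'}\ni\phi$ be the partition intervals of \eqref{def: partition}. For each large $N$, pick a mesoscopic scale $L=L(N)$ with $1\ll L\ll\sqrt N$ and a microscopic box $B=B_{L,N}\subset\{1\leq i\leq j\leq N\}$ of cardinality of order $L^2$, anchored near $(\floor{\theta N},\floor{\phi N})$, placed one-sidedly and, when $\theta=\phi$, with the $i$- and $j$-ranges separated by an amount of order $L\ll N$, so that: (i) $i<j$ for every $(i,j)\in B$; and (ii) $i/N\to\theta$ from within $I_\alpha$ and $j/N\to\phi$ from within $I_{\alpha'}$, so $\psi(\cdot,\cdot,m,n)$ is Lipschitz along the approach. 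Take the oscillatory test array $c\wo N_{ij}:=\ind{(i,j)\in B}\,e^{-\ii 2\pi(si-tj)}$. Since the $x_{ij}$ are real, $\langle c\wo N,\Sigma\wo N c\wo N\rangle=\E{\absa{\textstyle\sum_{(i,j)\in B}c\wo N_{ij}x_{ij}}^2}\geq0$, and this is $\geq c_0\absa B$ when $\xi$ is positive definite.

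It then remains to prove $\absa B^{-1}\langle c\wo N,\Sigma\wo N c\wo N\rangle\to\hat\psi(\theta,\phi,s,t)$ as $N\to\infty$. Inserting $\xi\wo N_{ijkl}=\psi(i/N,j/N,k-i,l-j)+\O{N^{-1}}$: the $\O{N^{-1}}$ term contributes $\O{\absa B/N}=\O{L^2/N}\to0$ after dividing by $\absa B$; the $\O(1)$ rows and columns of $B$ lying within $K/N$ of a discontinuity of $\psi$ contribute $\O{1/L}\to0$, since the surviving entries of $\Sigma\wo N$ are bounded by $1$; and replacing $i/N,j/N$ by $\theta,\phi$ costs $\O{L/N}\to0$ by the Lipschitz bound. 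What remains is translation invariant: writing $m=k-i$, $n=l-j$, only $\absa m,\absa n\leq K$ survive, the number of pairs of points of $B$ with a prescribed shift $(m,n)$ is $\absa B(1+\O{K/L})$, and thus the normalized quadratic form converges to $\sum_{m,n}\psi(\theta,\phi,m,n)e^{-\ii 2\pi(sm-tn)}$, which equals $\hat\psi(\theta,\phi,s,t)$ once one uses $\psi(\theta,\phi,m,n)=\psi(\theta,\phi,-m,-n)$. Passing to the limit gives $\hat\psi(\theta,\phi,s,t)\geq0$, and $\hat\psi(\theta,\phi,s,t)\geq c_0$ under Definition \ref{def: pd}.

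The only genuine difficulty is bookkeeping, concentrated in the construction of $B$ and the coordination of the two limits: $L$ must tend to infinity in order to annihilate the $\O{K/L}$ translation defect and the $\O{1/L}$ exceptional-index error, yet must satisfy $L\ll\sqrt N$ in order to annihilate the aggregated $\O{\absa B/N}$ discretization error; and $B$ must simultaneously respect $i<j$ and remain inside a single Lipschitz piece of $\psi$ even when $(\theta,\phi)$ lies on the diagonal $\theta=\phi$ or on the boundary of the partition \eqref{def: partition}. This is routine but is precisely where the three hypotheses — finite range of the correlations, Lipschitz discretization, and uniformity in $N$ in Definition \ref{def: pd} — are used.
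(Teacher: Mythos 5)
Your proof is correct, and its key mechanism is the same as the paper's: realize $\hat\psi(\theta,\phi,s,t)$ as a limit of (normalized) variances of oscillatory linear statistics of the $x_{ij}$, and inherit nonnegativity, respectively the lower bound $c_0$, from the positive semidefiniteness of the covariance. The implementation differs, though. The paper tests with a \emph{global} statistic $Y_N=\tfrac1N\sum_{i,j}\hat x_{ij}\,g(i/N,j/N)e^{\ii2\pi(si-tj)}$ for an arbitrary continuous weight $g$, obtains $0\le\iint g^2\hat\psi\le K^2\norm{g}_\infty^2$ in the limit, and then deduces the pointwise statement from the arbitrariness of $g$ (which implicitly uses the piecewise continuity of $\hat\psi$ in $(\theta,\phi)$); it also works with the extended tensor via the auxiliary variables $\hat x_{ij}$. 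You instead test $\Sigma\wo N$ directly against a plane wave restricted to a mesoscopic box $B$ of side $L$ with $1\ll L\ll N$, anchored at $(\theta,\phi)$ and kept inside the upper triangle and inside a single Lipschitz piece. This buys you the pointwise bound at every $(\theta,\phi)$ in one step, stays entirely within the index set $\{i\le j\}$ on which $\Sigma\wo N$ is defined (so you never need the $\hat x$ extension), and makes visible exactly where each hypothesis enters; the cost is the bookkeeping you describe. Two harmless imprecisions: the constraint $L\ll\sqrt N$ is not actually needed (the $\O{N^{-1}}$ discretization errors aggregate to $\O{\absa{B}/N}$ only over the $\O{\absa B}$ correlated pairs, so after normalizing by $\absa B$ the error is $\O{N^{-1}}$ for any $L$), and your constant $(2K+1)^2$ in place of $K^2$ is immaterial since the paper treats $K$-dependent constants as universal (its own proof of the upper bound has the same slack).
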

\begin{proof}
	Take an arbitrary real continuous function $g\in C([0,1]^2)$.  For each $N\in\N$ define a random variable $$Y_N= \tfrac{1}{N}\sum_{i,j} \hat x_{ij}g(i/N,j/N)e^{\ii 2\pi (s i-t j)}.$$
	Here $\hat x$ is defined as in \eqref{def: hat x}.  One can easily compute the variance of $Y_N$:
	\be
		\var{Y_N}=\frac{1}{N^2}\sum_{i,j,k,l} \psi(i/N,j/N,k,l)g(i/N,j/N) g((i+k)/N,(j+l)/N)e^{\ii 2\pi(sk-tl)}+\O{N^{-1}}\,.
	\ee
	Let $N\to \infty$ and use the fact that  $0\leq \var{Y_N}\leq K^2$, we have
	\be
		0\leq \iint g(\theta,\phi)^2 \hat\psi(\theta,\phi,s,t)\dd \theta\dd\phi\leq K^2\norma{g}_\infty^2.
	\ee
	Since $g$ is arbitrary, we conclude that $0\leq\hat \psi (\theta,\phi,s,t)\leq K^2$.  
	
	 If $\xi$ is positive definite, then $\var {Y_N}\geq c_0$.  Letting $N\to\infty$ we have $\hat\psi (\theta,\phi,s,t)\geq c_0$.
	 
	Finally, the claim that $\Psi$ is a bounded operator follows from the upper bound of $\hat\psi$.
\end{proof}
This lemma together with \eqref{eq: check F} immediately yields the following corollary:
\begin{corollary}\label{cor: Psi}
Assume that $\xi$ is positive definite with lower bound $c_0$ in the sense of Definition \ref{def: pd}. Suppose that $f\in\cT_+$ and $\im \tr f \geq \om$. Then $\F(f)\in \cT(c'_\om,c_\om)$.
\end{corollary}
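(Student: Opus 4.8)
The plan is to transfer everything to the Fourier side via \eqref{eq: check F}, where $\F(f)$ becomes a family of scalar resolvents indexed by $(\theta,s)$, and then to feed in the two-sided control on the kernel $\hat\psi$ furnished by Lemma \ref{lem: positivity}. The first step is to rephrase the hypothesis $\im\tr f\ge\om$ in terms of $\check f$. By Definition \ref{def: tr infty} we have $\tr f=\int_0^1 f(\theta,0)\,\dd\theta$, and since $\check f$ is the inverse Fourier transform of $f$ in the second variable, $f(\theta,0)=\int_0^1\check f(\theta,s)\,\dd s$; therefore $\im\tr f=\iint_{[0,1)^2}\im\check f(\theta,s)\,\dd s\,\dd\theta\ge\om$. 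Because $f\in\cT_+$ forces $\im\check f>0$ at every point, the lower bound $\hat\psi\ge c_0$ of Lemma \ref{lem: positivity} upgrades this integral bound to a pointwise one: for all $(\theta,s)$,
\ben
	\im(S\check f)(\theta,s)=\iint\hat\psi(\theta,\phi,s,t)\,\im\check f(\phi,t)\,\dd\phi\,\dd t\ \ge\ c_0\iint\im\check f(\phi,t)\,\dd\phi\,\dd t\ \ge\ c_0\om\,.
\ee
This is the exact analogue, in the present setting, of the way stochasticity of the variance profile is used for generalized Wigner matrices: an integral control on $\im\check f$ becomes a pointwise lower bound on $\im S\check f$.

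For the second step, put $w(\theta,s):=-z-(S\check f)(\theta,s)$, so that $\check{\F(f)}(\theta,s)=w(\theta,s)^{-1}$ by \eqref{eq: check F}. The bound just obtained gives $\im w(\theta,s)\le-(\eta+c_0\om)<0$, hence $|w(\theta,s)|\ge\eta+c_0\om$, and thus $|\check{\F(f)}(\theta,s)|\le(\eta+c_0\om)^{-1}=:c_\om$ uniformly in $(\theta,s)$; this gives the upper bound appearing in the definition of $\cT(\cdot,c_\om)$. For the lower bound on the imaginary part, $\im\, w(\theta,s)^{-1}=(\eta+\im(S\check f)(\theta,s))\,|w(\theta,s)|^{-2}\ge\eta\,|w(\theta,s)|^{-2}$; and since Lemma \ref{lem: positivity} gives $\norm{\Psi(f)}\le K^2\norm{f}$, hence $\norm{S\check f}_\infty\le K^2\norm{f}<\infty$, we have $|w(\theta,s)|\le|z|+K^2\norm{f}$, so that $\im\check{\F(f)}(\theta,s)\ge\eta(|z|+K^2\norm{f})^{-2}=:c'_\om>0$. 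Together these two bounds give $\F(f)\in\cT(c'_\om,c_\om)$.

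I do not anticipate any genuine obstacle: the corollary is essentially a repackaging of Lemma \ref{lem: positivity}, and it plays the same structural role for the limiting equation \eqref{eq: sce infty} that the corollary following Lemma \ref{lem: closed} plays for the finite-$N$ equation, namely as input to an Earle--Hamilton fixed-point argument. The only points that need a little care are (i) the bookkeeping that bridges the trace formulation of the hypothesis and the $\check f$ formulation of the domain $\cT(\e,\de)$, handled by the identity $\im\tr f=\iint\im\check f$ used above; and (ii) the fact that the constant $c'_\om$ produced in the second step involves $\norm{f}$. The latter is harmless because in every application the corollary is invoked with $f$ ranging over a bounded subset of $\cT$ (indeed over a set of the form $\cT(0,\de)$ with $\de=\de(z)$ fixed), so $c'_\om$ and $c_\om$ can be taken to depend only on $\om$ and $z$.
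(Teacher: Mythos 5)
Your route is exactly the paper's intended one (the paper treats this corollary as immediate from Lemma \ref{lem: positivity} and \eqref{eq: check F}), and the key step --- upgrading the integral hypothesis $\im\tr f=\iint\im\check f\ge\om$ to the pointwise bound $\im(S\check f)\ge c_0\om$ via $\hat\psi\ge c_0$ and $\im\check f>0$ --- is correct and is the whole content of the statement.

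There is, however, one step that as written does not deliver the conclusion. In the lower bound on the imaginary part you discard the term you just worked to obtain: from
\ben
\im\, w(\theta,s)^{-1}=\frac{\eta+\im(S\check f)(\theta,s)}{|w(\theta,s)|^{2}}
\ee
you keep only $\eta$ in the numerator, arriving at $c'_\om:=\eta\,(|z|+K^2\norm{f})^{-2}$. This constant depends on $\eta$ and vanishes as $\eta\to0^+$, so it is not a $c'_\om$ in the sense of the corollary; and the regime $\eta\to0$ is precisely where the corollary is used (it feeds into Theorem \ref{thm: stable bulk infty}, the stability estimate in the bulk uniformly down to the real axis). The fix is already in your own first step: use $\eta+\im(S\check f)\ge c_0\om$ to get $\im\check{\F(f)}\ge c_0\om\,\bigl(|z|+\eta+K^2\norm{f}\bigr)^{-2}$, which is bounded below uniformly in $\eta$. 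Your closing remark about the residual dependence on $|z|$ and $\norm{f}$ is fair --- the paper's statement is loose on this point, and in every application $f$ ranges over a bounded set and $z$ over a bounded domain --- but the $\eta$-dependence is not of the same harmless kind and needs to be removed.
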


We prove the unique solvability of equation \eqref{eq: sce infty}, using the same argument as in the proof of Theorem \ref{thm: solvable}.

\begin{theorem}\label{thm: solvable infty}
	For any $z\in\C^+$, there is a unique solution $m$ of \eqref{eq: sce infty} in the space $\cT_+$.  The solution  lies in $\cT(\e,\de/4)$ where
\ben	
	\de=4\eta^{-1}\,,\quad\e=\eta(K^2\de/2+\absa{z})^{-2}\,.
\ee
The solution $m$ is Liptchitz on each $I_j$ (see \eqref{def: partition}). 

The off-diagonal entries of $m$ decay exponentially:
	\ben
		\sup_{\theta}\absa{m(\theta,k)}\leq 2(2K+1) \kappa(m) \alpha ^{(k-K)_+}\,,
	\ee
	where $\kappa(m):=\norma{m}\norma{m^{-1}}$ and $\alpha:=\left((\kappa(m)-1)/(\kappa(m)+1)\right)^{2/(2K+1)}$.

The solution is stable in the sense that if $m'\in \cT(0,\de/2)$ satisfies a perturbed equation
\ben
	m'=\F(m')+r\,,
\ee
where $r\leq \e/2$, then $\norm{m'-m}\leq \de^2\e^{-2}\norm{r}$.
	
\end{theorem}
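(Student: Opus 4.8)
The plan is to transplant, into the Banach space $\cT=L^\infty([0,1),\cK)$, the fixed-point scheme used for the finite-$N$ equation in Theorems \ref{thm: solvable}, \ref{thm: stable} and \ref{thm: decay}, replacing the maps $\Xi,F$ by $\Psi,\F$ and the constant $K$ by $K^2$. First I would record the analog of Lemma \ref{lem: closed}: for $f\in\cT_+$ the element $z+\Psi(f)$ still has $\im\check{(\cdot)}$ bounded below, because $\hat\psi\geq 0$ by Lemma \ref{lem: positivity} forces $\im\check{\Psi(f)}\geq 0$ pointwise while $\im z=\eta$; and the map $g\mapsto -g^{-1}$ preserves positivity of $\im\check g$ and sends $\cT(\e,\de)$ into $\cT(\e\de^{-2},\e^{-1})$, since after Fourier transform this inverse is pointwise division by $\check g(\theta,\zeta)$. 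Combining this with the bound $\norm{\Psi(f)}\leq K^2\norm{f}$ from Lemma \ref{lem: positivity}, the same computation as in the corollary following Lemma \ref{lem: closed} shows that with $\de=4\eta^{-1}$ and $\e=\eta(K^2\de/2+\absa{z})^{-2}$ the map $\F$ carries $\cT(0,\de/2)$ into $\cT(\e,\de/4)$, whose $\e$-neighborhood lies inside $\cT(0,\de/2)$.

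Since $\cT(0,\de/2)$ is a bounded convex domain in $\cT$ and $\F$ is holomorphic and bounded there (it is the composition of the bounded linear map $\Psi$ with inversion in the Banach algebra $\cT$, which is holomorphic on the invertibles, and on the relevant image the inverse is uniformly bounded by $\eta^{-1}$), I would run the metric-contraction argument of Lemma \ref{lem: contraction} verbatim: build the Caratheodory metric $\dd$ on $\cT(0,\de/2)$, show $\F$ is a strict contraction with factor $(1+\e/\de)^{-1}$, and that $\dd$ is equivalent to $\norm{\cdot}$ on the interior. Iterating $\F$ from an arbitrary $Q_0\in\cT_+$ — where $\F(Q_0)\in\cT_+$ has $\norm{\F(Q_0)}\leq\eta^{-1}=\de/4$, hence lies in $\cT(0,\de/2)$, so $\F^2(Q_0)\in\cT(\e,\de/4)$ and all further iterates stay there — produces a Cauchy sequence converging to the unique fixed point $m\in\cT(0,\de/2)\cap\cT(\e,\de/4)$. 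Uniqueness in all of $\cT_+$ follows because any $\cT_+$-solution satisfies $\norm{m}=\norm{(-z-\Psi(m))^{-1}}\leq\eta^{-1}<\de/2$ and $\im\check m>0$, hence lies in $\cT(0,\de/2)$; and the stability estimate $\norm{m'-m}\leq\de^2\e^{-2}\norm{r}$ comes out of the same metric-contraction argument exactly as in Theorem \ref{thm: stable}.

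The two genuinely new points are the off-diagonal decay and the Lipschitz regularity. For the decay, observe that for each $\theta$ the element $-z-\Psi(m)(\theta)$ is a bi-infinite banded Toeplitz operator of band width $2K+1$, since $\psi(\theta,\phi,k,l)=0$ for $\absa{k}>K$ and hence $\Psi(m)(\theta,k)=0$ for $\absa{k}>K$; its inverse in $\cK$ is $m(\theta)$, with $\norm{(-z-\Psi(m)(\theta))^{-1}}\norm{-z-\Psi(m)(\theta)}\leq\kappa(m)$. Applying Lemma \ref{lem: demko} and using that the inverse of a Toeplitz operator is Toeplitz (so that $m(\theta,k)$ is the relevant off-diagonal entry) yields $\sup_\theta\absa{m(\theta,k)}\leq 2(2K+1)\kappa(m)\alpha^{(k-K)_+}$ with $\alpha=((\kappa(m)-1)/(\kappa(m)+1))^{2/(2K+1)}$, after replacing $\kappa(-z-\Psi(m)(\theta))$ by its supremum $\kappa(m)$.

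For the Lipschitz bound, the key observation is that the $\theta$-dependence of $\F(f)$ enters only through $\psi(\theta,\cdot,\cdot,\cdot)$. A short estimate using the piecewise-Lipschitz hypothesis on $\psi$, its $K$-support in the discrete variables, and $\sum_{\absa{l}\leq K}\absa{f(\phi,l)}\leq(2K+1)\norm{f}$, shows that for $\theta,\theta'$ in a common interval $I_\alpha$ one has $\norm{\Psi(f)(\theta)-\Psi(f)(\theta')}\leq (2K+1)^2 L_\psi\norm{f}\,\absa{\theta-\theta'}$, and then, using that the two relevant $\cK$-inverses have norm $\leq\eta^{-1}$, that $\F(f)$ is Lipschitz on each $I_\alpha$ with constant $\leq\eta^{-2}(2K+1)^2 L_\psi\norm{f}$ — a bound that depends only on $\norm{f}$, not on the modulus of continuity of $f$. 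Consequently every iterate $\F^n(Q_0)$ with $n\geq 2$ is Lipschitz on each $I_\alpha$ with one common constant, and since $\F^n(Q_0)\to m$ in $\cT$-norm this passes to $m$. I expect this Lipschitz step to be the main point requiring care: one must verify that inversion in $\cK$ does not destroy continuity in $\theta$ (it does not, since on $\cT(0,\de/2)$ the inverse is uniformly bounded, so inversion is Lipschitz there) and, more delicately, that the Lipschitz constant genuinely fails to blow up under iteration — which is what makes the uniform-limit argument go through.
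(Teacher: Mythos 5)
Your proposal follows essentially the same route as the paper's proof: use Lemma \ref{lem: positivity} to show $\F$ maps $\cT(0,\de/2)$ strictly into $\cT(\e,\de/4)$, rerun the Caratheodory-metric contraction argument of Lemma \ref{lem: contraction} verbatim (the paper explicitly notes it uses nothing beyond $\cT$ being a complex Banach space), deduce stability as in Theorem \ref{thm: stable}, and obtain the off-diagonal decay from Lemma \ref{lem: demko} applied to the banded convolution operator $-z-\Psi(m)(\theta)$. Your argument is correct, and in fact more complete than the paper's, which asserts the piecewise Lipschitz regularity of $m$ without proof, whereas you supply the (correct) uniform-Lipschitz-constant-of-iterates argument for it.
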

\begin{proof}
By Lemma \ref{lem: positivity} we see that if $\inf_{\theta,s}\check f(\theta,s)\geq 0$, then $(-z-S\check f(\theta,s))^{-1}\in (\e,\de/4)$ for all $(\theta,s)\in[0,1)^2$. Therefore, $\F$ maps $\cT(0,\de/2)$  to $\cT(\e,\de/4)$, whose $\e$-neighborhood is a subset of $\cT(0,\de/2)$.  Now one can repeat exactly the same argument in the proof of Lemma \ref{lem: contraction}, because the argument has nothing to do with the structure of $\cT$ except for that $\cT$ is a complex Banach space.  Therefore, there is a metric $\dd$ on $\cT(0,\de/2)$ under which the map $\F$ is a strict contraction satisfying \ben
	\dd(\F(f_1,f_2))\leq(1+\e/\de)^{-1}\dd(f_1,f_2)\,,
\ee
and
\ben 
	\de^{-1} \norma{f_1-f_2}\leq \dd(f_1,f_2)\leq \beta^{-1} \norma{f_1-f_2}\,,\text{ for $f_1,f_2\in\cT(\beta,\de/2-\beta)$}.
\ee
Thus one can obtain the solution by iterating $\F$ from an arbitrary initial point $Q_0\in\cT(0,\de/2)$.  The exponential decay of $m(\theta,k)$ follows from Lemma \ref{lem: demko}.  The stability follows from exactly the same argument in the proof of Theorem \ref{thm: stable}.
\end{proof}


Note that in the above theorem, the stability relies on the $\eta$ and $\absa{z}$.  If $\eta$ is too small or $\absa{z}$ is too large, the estimate breaks down.  The following theorem says that equation \eqref{eq: sce infty} is stable for $z\in\C^+$ as long as $\im \tr m(z)$ is positive, even if $z$ is very close to the real axis. The theorem follows from Theorem 2.12 of \cite{Ajanki2015a} and Lemma \ref{lem: positivity}.  For the sake of completeness, we give a self-contained proof in our setting. 
\begin{theorem}\label{thm: stable bulk infty}
Assume that $\xi$ is positive definite with lower bound $c_0>0$ in the sense of Definition \ref{def: pd}.  Let $m$ solve equation \eqref{eq: sce infty}.  Assume that $z$ and $\om$ satisfies $\im \tr m (z)\geq \om>0$, and that $m'$ satisfies the following perturbed equation:
\ben
	m'=(-z-\Psi(m'))^{-1}+r\,,
\ee
where $r\in\cT$.  Then there are $\e_\om$ and $c_\om$ such that 
\ben
	\norma{m'-m}\leq c_\om \norma{r}\,,
\ee
given $\norma{m'-m}\vee\norma{r}\leq \e_\om$.
\end{theorem}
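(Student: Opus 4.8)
The plan is to Fourier-transform equation \eqref{eq: sce infty} into the scalar quadratic vector equation \eqref{eq: sce u}, to observe that positive definiteness of $\xi$ places us in the bulk regime of the stability theory of \cite{Ajanki2015a}, and then to upgrade the resulting linear stability estimate to the nonlinear statement by a perturbative bootstrap. Writing $u:=\check m$ and $u':=\check{m'}$, equation \eqref{eq: sce infty} and its perturbation become, via \eqref{eq: check F}, the scalar equations (cf. \eqref{eq: sce u})
\ben
	u(x)=(-z-(Su)(x))^{-1}\,,\qquad u'(x)=(-z-(Su')(x))^{-1}+\check r(x)\,,\qquad x\in\mathfrak X:=[0,1)^2\,,
\ee
with $S$ the symmetric integral operator \eqref{def: S}, whose kernel $\hat\psi$ is bounded above and below by positive constants by Lemma \ref{lem: positivity}. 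From $u=(-z-Su)^{-1}$ one has $\im u=(\eta+S\im u)|u|^2$, hence $|u|^{-1}=|-z-Su|\ge \eta+S\im u\ge c_0\iint_{\mathfrak X}\im u=c_0\,\im\tr m\ge c_0\om$, which bounds $|u|$ from above; then $|-z-Su|\le|z|+K^2\norm{u}_\infty$ bounds $|u|$ from below (the hypothesis $\im\tr m\ge\om$ also keeps $|z|$ bounded in terms of $\om$, since $\mu$ is supported in a fixed compact set); and finally $\im u\ge(S\im u)|u|^2\ge c_0\om|u|^2$ bounds $\im u$ from below. Thus $c_\om^{-1}\le|u(x)|,\ \im u(x)\le c_\om$ for all $x\in\mathfrak X$, and since $\norm{m'-m}\le\e_\om$ the same holds for $u'$; in particular the phase $\beta:=\tfrac12\arg u$ stays in $[\varkappa_\om,\tfrac\pi2-\varkappa_\om]$ for some $\varkappa_\om>0$.

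\emph{The main obstacle is the linear stability estimate.} Differentiating $\F$ at $m$ and using $-z-\Psi(m)=m^{-1}$ gives $D\F(m)v=m\Psi(v)m$, which on the Fourier side is the stability operator $\mathcal B:=1-u^2S$ on $L^\infty(\mathfrak X)$, and the whole argument reduces to the bound $\norm{\mathcal B^{-1}}\le c_\om$. This is exactly the content of Theorem~2.12 of \cite{Ajanki2015a}, whose hypotheses we have just verified: the kernel of $S$ is bounded above and below, and $\im u$ is bounded below. For completeness one reproduces the argument. Conjugating by multiplication by $|u|$ turns $u^2S$ into $\tilde u^2F$, where $\tilde u:=u/|u|=e^{2\ii\beta}$ has modulus one and $F:=|u|S|u|$ is self-adjoint, nonnegative, and has kernel bounded above and below by positive ($\om$-dependent) constants. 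Testing the identity $(1-F)(\im u/|u|)=\eta|u|$ against the Perron eigenfunction of $F$ shows $r(F)<1$, while the Krein--Rutman theorem yields that $r(F)$ is a simple eigenvalue with a comparable positive eigenfunction and a quantitative spectral gap. The delicate point — and where both the positivity of $\xi$ and the bulk hypothesis $\im\tr m\ge\om$ are essential — is to combine this spectral gap with the fact that $\beta$ is bounded away from $0$ and $\tfrac\pi2$ in order to show that $1$ stays at a distance from the spectrum of $\tilde u^2F$ that does not degenerate as $\eta\to0$; granting that, $\norm{(1-\tilde u^2F)^{-1}}_{L^2\to L^2}\le c_\om$, which is upgraded to $L^\infty$ — hence to the $\cT$-norm, since $\norm{f}_\cT=\norm{\check f}_\infty$ — using that $S$ maps $L^1$ boundedly into $L^\infty$ and the equation $h=\tilde u^2Fh+b$.

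\emph{Nonlinear closure.} For $\norm{m'-m}$ small one has the absolutely convergent expansion $\F(m')=(m^{-1}-\Psi(m'-m))^{-1}=\sum_{n\ge0}(m\Psi(m'-m))^nm$, so, with $e:=m'-m$,
\ben
	(1-\mathcal L)e=r+\mathcal Q(e)\,,\qquad \mathcal L v:=m\Psi(v)m\,,\quad \mathcal Q(e):=\sum_{n\ge2}(m\Psi(e))^nm\,.
\ee
By the bounds of the previous step and $\norm{\Psi(f)}\le K^2\norm{f}$ (Lemma \ref{lem: positivity}), $\mathcal L$ is bounded on $\cT$ and $\norm{\mathcal Q(e)}\le c_\om\norm{e}^2$ once $\norm{e}\le\e_\om$. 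Since $\mathcal L$ is conjugate to $\mathcal B$ under the Fourier transform, $\norm{(1-\mathcal L)^{-1}}\le c_\om$, whence $\norm{e}\le c_\om(\norm{r}+c_\om\norm{e}^2)$; shrinking $\e_\om$ so that $c_\om^2\e_\om\le\tfrac12$ absorbs the quadratic term and gives $\norm{e}\le2c_\om\norm{r}$, as claimed.
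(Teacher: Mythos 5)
Your proposal follows essentially the same route as the paper's proof: Fourier transform to the quadratic vector equation \eqref{eq: sce u}, a priori bounds on $u$ from the positivity of $\hat\psi$ (Lemma \ref{lem: positivity}) and from $\im\tr m\ge\om$, the saturated self-adjoint operator $F=\absa{u}S\absa{u}$ (the paper's $T$) with its Krein--Rutman spectral gap, invertibility of $1-e^{2\ii\beta}F$, an $L^2$-to-$L^\infty$ upgrade via the boundedness of the kernel, and absorption of the quadratic error for $\norm{m'-m}\le\e_\om$. The one step you leave as ``granting that'' --- combining the spectral gap with the phase bound to keep $1$ at an $\om$-dependent distance from the spectrum of $e^{2\ii\beta}F$ --- is precisely what the paper writes out: it splits a unit test vector $v=v_1+v_2$ along and orthogonal to the Perron eigenfunction $w=\im u/\absa{u}$ and interpolates between $\re\pr{v^*e^{2\ii\beta}Fv}\le 1-c_\om+2\norm{v_2}$ (valid since $\cos 2\beta\le 1-c_\om$, useful when $\norm{v_2}$ is small) and $\re\pr{v^*e^{2\ii\beta}Fv}\le\sqrt{\norm{v_1}^2+(1-\de)^2\norm{v_2}^2}$ (from the gap, useful when $\norm{v_2}$ is large); your fallback citation of Theorem 2.12 of \cite{Ajanki2015a} for this point is the same appeal the paper itself makes before giving its self-contained version, so the argument is sound, though to be fully self-contained you would need to supply this two-regime estimate.
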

\begin{proof}
Let $u$, $u'$ and $\check r$ be the inverse Fourier transform in the second variable of $m$, $m'$ and $r$, respectively.   Consider a space $\check\cT=L^\infty([0,1)^2)$ which is isometric to $\cT$ by Fourier transform.  Then, $u'\in\check\cT$ satisfies a perturbed version of \eqref{eq: sce u}:
\be\label{eq: sce u'}
	u'(\theta,s)=\frac{1}{-z-(Su')(\theta,s)}+\check r(\theta,s)\,,\forall (\theta, s)\in[0,1)^2\,.
\ee
In the sequel, we denote the norm on $\check\cT$ by $\norm{\cdot}_\infty$.  Then the assumptions of the theorem translated into $u'$ and $\check r$ reads
\ben
	\im \iint u(\theta,s)\dd\theta\dd s u\geq \om>0\,,
\ee
and
\ben	
	\norm{u'-u}_\infty \vee \norm{r}_\infty\leq \e_\om\,.
\ee 
In view of Corollary \ref{cor: Psi}, if $\e_\om$ is small enough, then $\inf_{\theta,s} \im u(\theta,s)\wedge \im u'(\theta,s)\geq c_\om$.

We take the imaginary part of equation \eqref{eq: sce u} and see that 
\ben
	\im u (\theta,s)= \frac{\eta+(S\im u)(\theta,s)}{\absa{z+(Su)(\theta,s)}^2}\,
\ee
Note that $\absa{z+(Su)(\theta,s)}^{-2}=\absa{u(\theta,s)}^2$, we have (here we omit the variables $(\theta,s)$)
\ben
	\im u=\absa{u}^2\left(\eta +(S\im u)\right)\,.
\ee
Dividing both sides by $\absa{u}$ and note that $\eta\geq0$ we have
\be\label{eq: imuu}
	\tfrac{\im u}{\absa{u}}\geq \absa{u} (S\im u)\,.
\ee
Now define an operator $T$ on $\cT$ by
\ben
	(Tf)(\theta,s):=\iint \absa{u(\theta,s)}\hat\psi(\theta,\phi,s,t)\absa{u(\phi,t)} f(\phi,t)\dd\phi\dd t\,.
\ee
Denote $w:=\tfrac{\im u}{\absa{u}}$. Recall the definition of $S$, then \eqref{eq: imuu} becomes
\ben	
	w=Tw\,.
\ee
Note that $T$ is an self-adjoint integral operator with a strictly positive integral kernel, by Krein-Rutman theorem, $T$'s largest eigenvalues is $1$ and has a positive spectral gap $\delta$ depending on $c_0$.

Now take the difference between \eqref{eq: sce u} and \eqref{eq: sce u'},
\be\label{eq: diff uu'}
	u-u'=\frac{S(u-u')}{(z+Su)^2}+\O{c_\om \norm{u-u'}_\infty^2}-\check r\,.
\ee
Note that $(z+Su)^{-2}=u^2$. Denote $q:=\frac{u-u'}{\absa{u}}$ and define a function $\alpha\in\check\cT$ through $e^{i\alpha}:=u/\absa{u}$, then
\be\label{eq: q}
	q=e^{i2\alpha}Tq+\O{c_\om \norm{u-u'}_\infty^2}-\check r\,.
\ee

We claim that $1-e^{i2\alpha}T$ is invertible and its inverse is bounded by some $C_\om$.  To prove this claim, it is sufficient to prove  $\re(v^* e^{\ii 2\alpha}Tv)\leq 1-c_\om$ for any unit vector $v\in L^2([0,1)^2)$ and some $c_\om>0$.  Write $v=v_1+v_2$ where $v_1$ is parallel to $w$ and $v_2$ is orthogonal to $w$. Then
\be\label{eq: re vv}
	\re(v^* e^{\ii 2\alpha}Tv)=\re(v^*e^{\ii 2\alpha}v_1)+\re(v^*e^{\ii 2\alpha}Tv_2)\leq \re(v^*_1e^{\ii 2\alpha}v_1) +2\norm{v_2}\,.
\ee
The first term on the right hand side by definition equals
\ben
	\re(v^*_1e^{\ii 2\alpha}v_1)=\iint \absa{v_1(\theta,s)}^2 \cos(2\alpha)\dd \theta\dd s\,.
\ee
Note that $\cos(2\alpha)\leq 1-c_\om$ since $\im u\geq \om$ and $\absa{u}\leq C_\om$, thus
\be
	\re(v^*_1e^{\ii 2\alpha}v_1)\leq (1-c_\om)\norm{v_1}^2\leq 1-c_\om\,.
\ee
Plugging into \eqref{eq: re vv} we have, 
\be\label{eq: re vv 1}
	\re(v^* e^{\ii 2\alpha}Tv)\leq 1-c_\om+2\norm{v_2}\,.
\ee
This estimate is useful when $\norm{v_2}$ is small.  On the other hand,
\be\label{eq: re vv 2}
	\re(v^*e^{\ii 2\alpha}Tv)\leq \norm{Tv}\leq \sqrt{\norm{v_1}^2+(1-\de)^2\norm{v_2}^2}\,.
\ee
Here we have used the spectral gap of $T$.  The right hand side is small when $\norm{v_2}$ is big.  Combining \eqref{eq: re vv 1} and \eqref{eq: re vv 2} we have, for some new constant $c_\om>0$,
\be
	\re(v^*e^{\ii 2\alpha}Tv)\leq 1-c_\om\,.
\ee
Thus we have proved the claim.  Therefore, \eqref{eq: q} yileds
\ben
	\norm{q}_{L^2}\leq c_\om(\norm{u-u'}_\infty^2+\norm{\check r}_\infty)\,.
\ee
Recall that $q=(u-u')/\absa{u}$, and that $\absa{u}$ is bounded, we have
\ben
	\norm{u-u'}_{L^2}\leq c_\om(\norm{u-u'}_{L^2}^2+\norm{\check r}_\infty)\,.
\ee
Take $\e_\om$ small enough, then we get
\ben
	\norm{u-u'}_{L^2}\leq c_\om\norm{\check r}_{\infty}\,.
\ee
Plugging in \eqref{eq: diff uu'} and recalling that $S$ is an integral operator with bounded integral kernel, we have
\ben
	\norm{u-u'}_{\infty}\leq c_\om\norm{\check r}_{\infty}\,.
\ee
By the definitions of $u$, $u'$ and $\check r$, the above estimate is equivalent to
\ben
	\norm{m-m'}\leq c_\om\norm{r}\,.
\ee
\end{proof}


One expects the solution $M$ of \eqref{eq: sce n} converges to the solution  $m$ of \eqref{eq: sce infty} as $N\to\infty$.  However, it is not clear how to define the limit of a sequence of band matrices whose sizes go to infinity.  Fortunately, we can show that $\tfrac{1}{N}\tr M$, as a sequence of holomorphic functions on $\C^+$, does converge to $\tr m$.  The trick is that one can 'imbed' $M$ into the space $\cT$ and show that this gives an approximate solution of \eqref{eq: sce infty} which is close to $m$.

\begin{theorem}\label{thm: converge}
Let $m$ solve \eqref{eq: sce infty} and $M$ solve \eqref{eq: sce n}.  Then, $\tr m-\tfrac{1}{N}\tr M=\O{c_z/N}$. Let $\hat M$ be the discretization of $m$ defined through $\hat M_{i,i+k}:= m(i/N,k)$, then
\ben
	\norm{M-F(\hat M)} \leq c_z/N\,.
\ee
\end{theorem}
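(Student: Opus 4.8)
The plan is to realize the discretization $\hat M$ of the continuum solution $m$ as an approximate solution of the finite-$N$ equation \eqref{eq: sce n}, and then to push closeness onto $M$ by a stability estimate. One first checks that for $N$ large the matrix $-z-\Xi(\hat M)$ is invertible, so that $F(\hat M)$ is well defined and $\norm{F(\hat M)}\leq c_z$; this uses the uniform lower bound $\inf_{\theta,s}\im\check m(\theta,s)\geq c_z>0$ from Theorem~\ref{thm: solvable infty} together with the representation $\Xi(A)=\E{\hat H^*A\hat H}$ (cf.\ \eqref{eq: xi rep}).

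The computational core is a Riemann-sum estimate. Using $\xi^{(N)}_{ijkl}=\psi(i/N,j/N,k-i,l-j)+\O{N^{-1}}$, the Lipschitz regularity of $\psi(\cdot,\cdot,k,l)$ on each $I_\alpha\times I_{\alpha'}$, and the exponential decay $\sup_\theta\absa{m(\theta,t)}\leq c_z\,\alpha^{(\absa t-K)_+}$ of Theorem~\ref{thm: solvable infty}, one obtains
\ben
	\Xi(\hat M)_{ij}=\Psi(m)(i/N,j-i)+\O{c_z/N}
\ee
uniformly over $\absa{i-j}\leq K$ (both sides vanish otherwise); besides the $\O{N^{-1}}$ discretization of $\xi$ and the Lipschitz increments, the only error is the $\O{\absa{\mathcal A}^2/N}$ cost of replacing $\tfrac1N\sum_{j'}$ by $\int_0^1\dd\phi$ across the finitely many points $\partial I_\alpha$. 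Substituting this together with $\hat M_{jl}=m(j/N,l-j)$ into the continuum equation $m(\theta)(-z-\Psi(m)(\theta))=\mathrm{id}$ at $\theta=i/N$ and rearranging gives
\ben
	(-z-\Xi(\hat M))\hat M=I+E\,,\qquad E_{ik}=\sum_{\absa{i-j}\leq K}\Psi(m)(i/N,j-i)\bigl(m(i/N,k-j)-m(j/N,k-j)\bigr)+\O{c_z/N}\,,
\ee
so that $\hat M-F(\hat M)=F(\hat M)E$.

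The delicate point — and the main obstacle — is the behaviour of $E$ near the discontinuities of $\psi$. When $i/N,j/N$ lie in a common interval $I_\alpha$ one has $m(i/N,k-j)-m(j/N,k-j)=\O{\absa{i-j}/N}$ and $E_{ik}=\O{c_z/N}$, but for the $\O{K\absa{\mathcal A}}$ indices $i$ with $i/N$ within $K/N$ of some $\partial I_\alpha$ this increment — and likewise the discretization error of $\xi$ — is only $\O{1}$, so $E$, hence $\hat M-F(\hat M)$, is of order one on those rows (with exponentially decaying tails coming from the off-diagonal decay of $F(\hat M)$, Theorem~\ref{thm: decay}). The resolution is that $M$ itself carries the same jump structure, so that the $\O{1}$ parts of $\hat M-M$ and of $\hat M-F(\hat M)$ cancel. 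This requires a localized form of the stability estimate of Theorems~\ref{thm: stable} and~\ref{thm: stable 1}: since the solution of \eqref{eq: sce n} responds to a spatially localized perturbation only through an exponentially decaying profile (a consequence of the off-diagonal decay of the solution, Theorem~\ref{thm: decay}), one obtains a stratified bound of the form
\ben
	\absa{(M-\hat M)_{ik}}\leq c_z\,\alpha^{\,c\,\mathrm{dist}(\{i,k\},J)}+c_z\,\alpha^{\absa{i-k}}/N\,,
\ee
where $J$ collects the $\O{K\absa{\mathcal A}}$ indices near some $\partial I_\alpha$. Then the mollifying inequality $\norm{\Xi(A)}_\infty\leq(2K+1)\norm{A}_\infty$ (Lemma~\ref{lem: mollify}) averages the exceptional rows away: for each $\absa t\leq K$ one has $\tfrac1N\sum_j\absa{(M-\hat M)_{j,j+t}}\leq\tfrac{c_z}{N}(\absa{\mathcal A}+1)$, so $\norm{\Xi(M-\hat M)}_\infty=\O{c_z/N}$, and since $\Xi(M-\hat M)$ is $(2K+1)$-banded also $\norm{\Xi(M-\hat M)}=\O{c_z/N}$. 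Finally the identity
\ben
	(-z-\Xi(\hat M))\bigl(M-F(\hat M)\bigr)=\Xi(M-\hat M)\,M\,,
\ee
i.e.\ $M-F(\hat M)=F(\hat M)\,\Xi(M-\hat M)\,M$, together with $\norm{F(\hat M)}\vee\norm{M}\leq c_z$, yields $\norm{M-F(\hat M)}\leq c_z/N$.

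The trace estimate follows in the same spirit: $\tr m-\tfrac1N\tr\hat M=\int_0^1 m(\theta,0)\dd\theta-\tfrac1N\sum_i m(i/N,0)=\O{c_z/N}$ by a Riemann-sum estimate from the Lipschitz regularity of $m(\cdot,0)$ and the finitely many jumps, while $\tfrac1N\absa{\tr(M-\hat M)}\leq\tfrac1N\sum_i\absa{(M-\hat M)_{ii}}=\O{c_z/N}$ by the stratified bound above, the $\O{K\absa{\mathcal A}}$ exceptional diagonal entries contributing only $\tfrac1N\cdot\O{K\absa{\mathcal A}}\cdot c_z$. Adding the two displays gives $\tr m-\tfrac1N\tr M=\O{c_z/N}$.
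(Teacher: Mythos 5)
Your architecture runs the paper's argument in the opposite direction: you discretize $m$ to get $\hat M$, show that $\hat M$ approximately solves the finite-$N$ equation \eqref{eq: sce n}, and try to conclude via a stability estimate for that equation; the paper instead imbeds $M$ into $\cT$ (via $\tm(\theta,k):=M_{\floor{N\theta},\floor{N\theta}+k}$), shows that this approximately solves the continuum equation, and invokes the stability of \eqref{eq: sce infty}. Your setup is sound, the identity $M-F(\hat M)=F(\hat M)\,\Xi(M-\hat M)\,M$ is correct, and you correctly identify the real obstacle: the perturbation $E$ is only $\O{1}$ (not $\O{1/N}$) on the $\O{K\absa{\mathcal A}}$ rows near the discontinuities of $\psi$, so neither Theorem \ref{thm: stable} (which needs $\norm{R}$ small) nor Theorem \ref{thm: stable 1} (which needs $\norm{R}_\infty$ small) is applicable.

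The gap is that your resolution of this obstacle --- the ``stratified bound'' $\absa{(M-\hat M)_{ik}}\leq c_z\,\alpha^{c\,\mathrm{dist}(\{i,k\},J)}+c_z\,\alpha^{\absa{i-k}}/N$ --- is asserted, not proved, and it does not follow from Theorem \ref{thm: decay} or from any stability statement in the paper. What you need is a genuinely \emph{localized} stability theory for \eqref{eq: sce n}: that a perturbation which is large only near the index set $J$ displaces the solution by an amount decaying exponentially away from $J$. This amounts to solving $D=F(\hat M)\Xi(D)M-F(\hat M)E$ for $D=M-\hat M$ in a norm that records spatial localization, i.e.\ inverting $1-F(\hat M)\Xi(\cdot)M$ while preserving exponential localization. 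The paper's contraction (Lemma \ref{lem: contraction}) lives in the Caratheodory metric and controls only global norms; pushing the banded-plus-exponentially-decaying structure through the Neumann series accumulates constants at every step and does not obviously close. This is precisely the difficulty the paper's route is designed to avoid: after imbedding into $\cT$, the $\O{1}$ errors sit on a $\theta$-set of measure $\O{1/N}$ and are washed out by the $\theta$-integral in $\Psi$ (the continuum analogue of your Lemma \ref{lem: mollify} step) \emph{before} any stability theorem is invoked, so only the already-mollified, uniformly $\O{1/N}$ error $\norm{\F(\tm)-\F(\F(\tm))}$ enters the stability estimate of Theorem \ref{thm: solvable infty}. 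To repair your proof you must either prove the localized stability lemma, or reorder the argument so that the mollification by $\Xi$ (or $\Psi$) acts on the error before stability is used --- which essentially reproduces the paper's proof.
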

\begin{proof}
	Define $\tm\in\cT$ by $\tm(\theta,k):=M_{\floor{N\theta},\floor{N\theta}+k}$ and $q\in\cT$ by $q(\theta,k):=(M^{-1})_{\floor{N\theta},\floor{N\theta}+k}$ if $\theta$'s $K/N$ neighborhood is contained in one of the $I_\alpha$'s (see \eqref{def: partition}) and $q(\theta,k):=z\de_{0k}$ otherwise. Recall that $M^{-1}=-z-\Xi(M)$, hence $M^{-1}$ has 
'continuity', i.e., $(M^{-1})_{i,k}-(M^{-1})_{i+1,k+1}=\O{N^{-1}}$ except for finitely many pairs $(i,k)$. 
	
	We claim that $-q$ is in the domain $\cT_+$. To prove this claim, we need to show that for any $(\theta,s)\in[0,1)^2$, $-\im \sum_k q(\theta,k)e^{\ii 2\pi sk}\geq 0$.  For $\theta$ whose $N^{-1/2}$- neighborhood is contained in one of the $I_\alpha$'s , take a vector
	\ben
		v=(v_k)= (e^{\ii 2\pi s  k}\ind{ \absa{Nk-\theta}\leq N^{-1/2}})\,.
	\ee
The components of $v$ vanishes outside the $N^{-1/2}$- neighborhood of $\floor{N\theta}$.  Now $\E{v^* \hat H^* M \hat Hv}= \tfrac{1}{N}\sum_{i',j,k,l}\xi_{i'jkl} M_{jl}  e^{\ii 2\pi s (i'-k)}\ind{\absa{Nk-\theta}\vee\absa{Ni'-\theta}\leq N^{-1/2}}$. Dividing by $\norm{v}^2$ and in view of the continuity of $M^{-1}$, we have
\ben
	\im \norm{v}^{-2}\E{v^* \hat H^* M \hat Hv}=-\eta-\im \sum_k q(\theta,k)e^{\ii 2\pi sk}+\O{c_z N^{-1/2}}\,.
\ee
The left hand side is non-negative because
\ben
	\im \E{v^* \hat H^* M \hat Hv}=\E{v^*\hat H^* \tfrac{1}{2\ii}(M+M^*)\hat Hv}\geq 0\,.
\ee
Therefore, $-\im\sum_k q(\theta,k)e^{\ii 2\pi sk}\geq \eta/2$ when $N$ is large enough.  For $\theta$ whose $N^{-1/2}$-neighborhood contains the endpoints of the $I_j$'s, the estimate also holds because of the continuity of $M^{-1}$.  Thus we have proved the claim.

Now we hope that $\tm$ is an approximate solution to \eqref{eq: sce infty}. However, it is not clear whether $\tm$ is in the domain $\cT_+$ or not.  Instead, $\F(\tm)\approx(-q)^{-1}$ is in the space $\cT_+$.  Thus, we prove that $\F(\tm)$ is an approximate solution, then we show that $\tr\F(\tm)$ is close to $m$.  

We first show that $\tm=\F(\tm)$ approximately holds. By definition
\be\label{eq: mq}
	\sum_l\tm(\theta,l)q(\theta,h-l)=\de_{0h}-w(\theta,h)\,,
\ee
where $w(\theta,h)=\sum_{\absa{l-h}\leq K}\tm(\theta,l)((M^{-1})_{\floor{N\theta+l},\floor{N\theta}+h}-q(\theta,h-l))$. Now we estimate $\norm{w(\theta)}$.  Note that $\norm{w(\theta)}\leq \sum_h\absa{w(\theta,h)}$, by the off-diagonal decay of $\tm$,
\ben
	\norm{w(\theta)}\leq c_z \sum_h \alpha_z^{(\absa{h}-2K)_+}\sup_{\absa{l-h}\leq K,j\in\N}\absa{((M^{-1})_{\floor{N\theta+l},\floor{N\theta}+l+j}-q(\theta,j)}\,.
\ee
Here $\alpha_z$ is a constant less than $1$ and depending on $z$.  Integrating over $\theta$, we have 
\be\label{eq: w bd}
	\int\norm{w(\theta)}\dd \theta \leq c_z/N\,.
\ee
 Now we estimate $q-(-z-\Psi(\tm))$, which is roughly the difference between an integral and its Riemann sum.  For $\theta$ whose $K/N$-neighborhood is contained in one of the $I_\alpha$'s,
\ben
	q(\theta,k)-(-z-\Psi(\tm))(\theta,k)=\iint \psi(\floor{N\theta}/N, \phi,k,l)\tm(\phi,l)\dd \phi\dd l - \tfrac{1}{N}\sum_{j,l}\xi_{\floor{N\theta},j,k,l}M_{jl}=\O{c_z/N}\,.
\ee
This combined with \eqref{eq: mq} and \eqref{eq: w bd} yields
\ben
	\int \norm{\tm(-z-\Psi(\tm))(\theta)}\dd \theta \leq c_z/N\,.
\ee
Remember that $-q\in\cT(\eta/2,c_z)$ when $N$ is large. Since $-q$ is close to $(z+\Psi(\tm))$, we have a bound $(z+\Psi(\tm))\in\cT(c'_z,c_z)$ when $N$ is large.  Therefore, it is comfortable to take the inverse of $(-z-\Psi(\tm))$, which is $\F(\tm)$.  Moreover, $\F(\tm)$ has exponential decaying off-diagonal entries.  Therefore, we multiply $\F(\tm)$ to the estimate above and see
\be\label{eq: m Fm 0}
	\int\norm{\tm(\theta)-\F(\tm)(\theta)}\dd\theta\leq c_z/N\,.
\ee
Then we apply the map $f\mapsto -z-\Psi(f)$ to $\tm$ and $\F(\tm)$, recalling that $\Psi$ is an integral operator with bounded integral kernel,
\ben
	\norm{(-z-\Psi(\tm))-(-z-\Psi(\F(\tm)))}\leq c_z/N\,.
\ee
Now we take the inverse of $(-z-\Psi(\tm))$ and $(-z-\Psi(\F(\tm)))$.  Because everything is bounded, we have 
\ben
	\norm{\F(\tm)-\F(\F(\tm))}\leq c_z/N\,.
\ee
To this end, we see that $\F(\tm)$ satisfies a perturbed equation $\F(\tm)=\F(\F(\tm))+$ error terms.  Apply the stability part in Theorem \ref{thm: solvable infty} to conclude
\be\label{eq: Ftm 0}
	\norm{\F(\tm)-m}\leq c_z/N\,.
\ee
Combined with \eqref{eq: m Fm 0} and note that $\tr \tm =\tfrac{1}{N} \tr M$ yields
\ben
	\tr m-\tfrac{1}{N} \tr M=\O{c_z/N}\,.
\ee
Thus we have proved the first claim in the theorem.

	To prove the last claim of the theorem, we combine \eqref{eq: m Fm 0} and \eqref{eq: Ftm 0} and see
\ben
	\int\norm{\tm(\theta)-m(\theta)}\dd \theta\leq c_z/N\,.
\ee
Let $\hat M$ be the discretization of $m$ defined through $\hat M_{i,i+k}:= m(i/N,k)$, we see that
\ben
	\sum_{i}\absa{M_{i,i+k}-\hat M_{i,i+k}}\leq c_z/N\,,\forall k.
\ee 
Applying $\Xi$ to $M$ and $\hat M$, we get
\ben
	\norm{\Xi(M)-\Xi(\hat M)}\leq c_z/N\,.
\ee
This enables us to estimate
\ben
	\norm{F(M)-F(\hat M)} \leq \norm{F(M)(\Xi(M)-\Xi(\hat M))F(\hat M)}\,.
\ee
Since $F(M)$ and $F(\hat M)$ are bounded, we have
\ben
	\norm{F(M)-F(\hat M)}\leq c_z \norm{\Xi(M)-\Xi(\hat M)}\leq c_z/N\,.
\ee
Thus we have proved the last claim in the theorem
\ben
	\norm{M-F(\hat M)} \leq c_z/N\,.
\ee
\end{proof}


\subsection{Stability in the bulk}\label{subsec: stable bulk}
In Theorem \ref{thm: stable bulk infty} we see that if  $\im\tr m(z)$ is bounded below, then the solution $m$ of \eqref{eq: sce infty} is stable under small perturbations, even if $z$ is close to the real axis. In this subsection we show that under the same assumption that $\im\tr m(z)$ is bounded below, the solution $M$ to the finite-demensional equation \eqref{eq: sce n} is also stable (Theorem \ref{thm: stable bulk}).  The strategy is to show that one can approximate $m$ by 'imbedding' $M$ into the space $\cT$.  The stability of $m$ will imply the stability of $M$.  Before proving the stability, we also prove Theorem \ref{thm: converge local} which says that $\tfrac{1}{N}\tr M$ converges to $\tr m$ in any domain where $\im \tr m$ is bounded below.  Theorem \ref{thm: converge local} will be used in the proof of Theorem \ref{thm: stable bulk}.

We will need the following property of the map $\Xi$.
\begin{lemma}\label{lem: smooth}
Assume that $\xi$ is positive definite with lower bound $c_0>0$ in the sense of Definition \ref{def: pd}.   Then, for any Hermitian matrix $A$ with $A\geq 0$ and $\tfrac{1}{N}\tr A\geq 1$, one has
\ben
	 \Xi(A)\geq c_0\,.
\ee
\end{lemma}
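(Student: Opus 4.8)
The plan is to reduce the operator inequality $\Xi(A)\geq c_0$ to the rank-one case and then feed in the positive definiteness of $\Sigma\wo N$ from Definition \ref{def: pd}. First I would invoke the representation \eqref{eq: xi rep}, $\Xi(A)=\E{\hat H^{*}A\hat H}$, together with the linearity of $\Xi$. Since $A\geq 0$, write its spectral decomposition $A=\sum_{k}\lambda_{k}u_{k}u_{k}^{*}$ with $\lambda_{k}\geq 0$ and $\{u_{k}\}$ orthonormal; then $\Xi(A)=\sum_{k}\lambda_{k}\Xi(u_{k}u_{k}^{*})$, so it suffices to prove the rank-one bound $\Xi(uu^{*})\geq \tfrac{c_{0}}{N}I$ for every unit vector $u$. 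Granting this, $\Xi(A)\geq \tfrac{c_{0}}{N}\bigl(\sum_{k}\lambda_{k}\bigr)I=\tfrac{c_{0}}{N}(\tr A)\,I\geq c_{0}I$ by the hypothesis $\tfrac{1}{N}\tr A\geq 1$. (Note that $A\geq 0$ is essential here, since otherwise some $\lambda_k<0$ would spoil the positivity.)

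For the rank-one bound, fix unit vectors $u,v$ and compute $v^{*}\Xi(uu^{*})v=\E{|u^{*}\hat Hv|^{2}}=\tfrac{1}{N}\,\E{\bigl|\sum_{a,b}\bar u_{a}v_{b}\,\hat x_{ab}\bigr|^{2}}$. Setting $c_{ab}:=\bar u_{a}v_{b}$ and splitting the sum according to \eqref{def: hat x} into upper- and lower-triangular parts gives $\sum_{a,b}c_{ab}\hat x_{ab}=e^{2\pi\ii w}S_{1}+S_{2}$ with $S_{1}=\sum_{a\leq b}c_{ab}x_{ab}$ and $S_{2}=\sum_{a>b}c_{ab}x_{ba}$. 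Expanding $|e^{2\pi\ii w}S_{1}+S_{2}|^{2}$ and using that $w$ is independent of $(x_{ij})$ with $\E{e^{\pm 2\pi\ii w}}=0$, the two cross terms vanish, so $\E{|\sum c_{ab}\hat x_{ab}|^{2}}=\E{|S_{1}|^{2}}+\E{|S_{2}|^{2}}$. Here $\E{|S_{1}|^{2}}=\sum_{a\leq b,\,a'\leq b'}c_{ab}\bar c_{a'b'}\,\xi_{aba'b'}$, which (using that $\Sigma\wo N$ is real symmetric and $\E{|S_1|^2}$ is real) equals $\langle \phi,\Sigma\wo N\phi\rangle$ for the vector $\phi=(c_{ab})_{a\leq b}$ indexed exactly as the rows of $\Sigma\wo N$; likewise $\E{|S_{2}|^{2}}=\langle \phi',\Sigma\wo N\phi'\rangle$, where $\phi'$ is supported on the strictly upper pairs, carrying the value $c_{ab}$ (for $a>b$) into the slot $(b,a)$ and vanishing on the diagonal. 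By Definition \ref{def: pd}, $\langle\phi,\Sigma\wo N\phi\rangle\geq c_{0}\|\phi\|^{2}$ and $\langle\phi',\Sigma\wo N\phi'\rangle\geq c_{0}\|\phi'\|^{2}$.

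Finally, since the pairs $\{a\leq b\}$ and $\{a>b\}$ partition all index pairs, $\|\phi\|^{2}+\|\phi'\|^{2}=\sum_{a\leq b}|c_{ab}|^{2}+\sum_{a>b}|c_{ab}|^{2}=\sum_{a,b}|u_{a}|^{2}|v_{b}|^{2}=\|u\|^{2}\|v\|^{2}=1$, so $v^{*}\Xi(uu^{*})v\geq c_{0}/N$. As $v$ ranges over unit vectors this yields $\Xi(uu^{*})\geq \tfrac{c_{0}}{N}I$, and the proof concludes as in the first paragraph. The one point requiring care is the bookkeeping in the second paragraph: one must split the double sum according to whether each index is on, above, or below the diagonal, keep track of the symmetrization $x_{ab}=x_{ba}$ for $a>b$, and observe that the phase $e^{2\pi\ii w}$ built into $\hat x$ in \eqref{def: hat x} is precisely what decouples the upper- and lower-triangular contributions so that two genuine copies of $\Sigma\wo N$ appear. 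Everything else is linear algebra.
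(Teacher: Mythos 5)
Your proof is correct and follows essentially the same route as the paper: spectral decomposition of $A$, the representation $\Xi(A)=\E{\hat H^{*}A\hat H}$, and a variance lower bound of $c_0$ per unit-trace rank-one piece coming from $\Sigma\wo{N}\geq c_0$. The only difference is that you spell out the upper/lower-triangular bookkeeping and the role of the phase $e^{2\pi\ii w}$ in decoupling the two halves of the sum, a step the paper's proof asserts without detail.
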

\begin{proof}
Assume that $A$ has spectral decomposition $A_{ij}=\sum_\al u_i^\alpha \ol{u_j^\alpha} \la_\al$. Then for any $\norma{v}=1$,
\ben
	v^*\Xi(A)v=\tfrac{1}{N}\sum_\al\left(\sum_{i,j,k,l} \ol{v_i} v_k\xi_{ijkl} u_k^\alpha \ol{u_l^\alpha} \la_\al\right) \,.
\ee
Note that $\sum_{i,j,k,l} \ol{v_i} v_k\xi_{ijkl} u_j^\alpha \ol{u_l^\alpha} $ is the variance of $\sum_{i,j} \hat x_{ij} \ol{v_i}u_j^\alpha$ where $\hat{x}_{ij}$ is defined in \eqref{def: hat x}.  By the assumption that $\xi$ is positive definite, $\sum_{i,j,k,l} \ol{v_i} v_k\xi_{ijkl} u_j^\alpha \ol{u_l^\alpha} $ is bounded below by $c_0$. It follows that 
\ben
	v^*\Xi(A)v\geq c_0\,.
\ee
\end{proof}

\begin{corollary}\label{cor: Xi}
Assume that $\xi$ is positive definite with lower bound $c_0>0$ in the sense of Definition \ref{def: pd}.  Suppose $\absa{z}\leq \om^{-1}$  and $Q\in\Mp_N$ satisfies $\im \tr Q\geq \om >0$, then there is a $c_\om$ such that $F(Q)\in \M_n(c_\om, c_\om^{-1})$. 

In particular, assume $M$ is the solution to equation \eqref{eq: sce n}.  Suppose $\absa{z}\leq \om^{-1}$ and $\im \tr M\geq \om>0$.  Then $M\in \cT(c_\om, c_\om^{-1})$, $\norm{M^{-1}}\leq C_\om$.
\end{corollary}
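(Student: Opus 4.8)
The plan is to extract from Lemma~\ref{lem: smooth} a lower bound on $\im(z+\Xi(Q))$ that is \emph{uniform in $\eta$}, and then invert. Write $A:=\tfrac{1}{2\ii}(Q-Q^\ast)$; since $Q\in\Mp_N$ this is a positive semidefinite Hermitian matrix, and its normalized trace satisfies $\tfrac1N\tr A=\tfrac1N\im\tr Q\ge\om$ (this is the normalization under which the hypothesis is meaningful). Applying Lemma~\ref{lem: smooth} to the rescaled matrix $\om^{-1}A$, whose normalized trace is at least $1$, gives $\Xi(\om^{-1}A)\ge c_0$, hence
\[
	\Xi(A)\ge \om c_0\,.
\]
From the representation \eqref{eq: xi rep}, $\Xi(M)^\ast=\Xi(M^\ast)$, so $\tfrac{1}{2\ii}(\Xi(Q)-\Xi(Q)^\ast)=\Xi(A)$. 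Setting $B:=z+\Xi(Q)$ and using $\eta=\im z>0$,
\[
	\tfrac{1}{2\ii}(B-B^\ast)=\eta+\Xi(A)\ge \om c_0\,.
\]

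Next I would bound $\norma{F(Q)}=\norma{B^{-1}}$. For any unit vector $v$, $\absa{v^\ast Bv}\ge \im(v^\ast Bv)=v^\ast\tfrac{1}{2\ii}(B-B^\ast)v\ge \om c_0$, and since $\absa{v^\ast Bv}\le\norma{Bv}$ this shows the smallest singular value of $B$ is at least $\om c_0$; hence $\norma{F(Q)}\le(\om c_0)^{-1}$. The important feature is that this bound needs \emph{no} a priori control on $\norma Q$. For the lower bound on the imaginary part of $F(Q)$ I would use the identity already exploited in the proof of Lemma~\ref{lem: closed},
\[
	\tfrac{1}{2\ii}\big(-B^{-1}-(-B^{-1})^\ast\big)=(B^{-1})^\ast\Big[\tfrac{1}{2\ii}(B-B^\ast)\Big]B^{-1}\ \ge\ \om c_0\,(B^{-1})^\ast B^{-1}=\om c_0\,(BB^\ast)^{-1}>0\,,
\]
and the right side is $\ge \om c_0\norma{B}^{-2}$. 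When $\norma Q$ is under control (the only situation in which the first statement is invoked), $\norma B\le\absa z+K\norma Q$ is bounded, which gives $F(Q)\in\M_N(c_\om,c_\om^{-1})$ for a suitable $c_\om$.

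For the ``in particular'' part I would specialize to $Q=M$: by Theorem~\ref{thm: solvable} we have $M\in\Mp_N$, and the hypotheses $\tfrac1N\im\tr M\ge\om$, $\absa z\le\om^{-1}$ are exactly what is needed above. The singular-value estimate of the second paragraph yields $\norma M=\norma{F(M)}\le(\om c_0)^{-1}$; this is where the fact that the estimate requires no prior bound on $\norma Q$ is essential, since Theorem~\ref{thm: solvable} only offers $\norma M\le4\eta^{-1}$, which degenerates as $\eta\to0$. Since $M$ solves \eqref{eq: sce n}, $M^{-1}=-(z+\Xi(M))$, so by \eqref{eq: Xi bd},
\[
	\norma{M^{-1}}\le\absa z+\norma{\Xi(M)}\le\om^{-1}+K\norma M\le\om^{-1}+K(\om c_0)^{-1}=:C_\om\,,
\]
which is the asserted bound on $\norma{M^{-1}}$. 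Feeding $\norma M\le(\om c_0)^{-1}$ back into the displayed lower bound for $\im F(Q)$, now with $B=-M^{-1}$, gives $BB^\ast=(M^\ast M)^{-1}$, hence $(BB^\ast)^{-1}=M^\ast M\ge\sigma_{\min}(M)^2=\norma{M^{-1}}^{-2}\ge C_\om^{-2}$, so $\tfrac{1}{2\ii}(M-M^\ast)\ge \om c_0C_\om^{-2}$. Together with $\norma M\le(\om c_0)^{-1}$ this places $M$ in $\M_N(c_\om,c_\om^{-1})$ with $c_\om:=\min\{\om c_0C_\om^{-2},\,\om c_0\}$.

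The only genuinely delicate point is this bootstrap: one cannot start from a uniform bound on $\norma M$, so that bound must be produced directly from the fixed-point structure $M=-(z+\Xi(M))^{-1}$ via the singular-value inequality $\norma{B^{-1}}\le\sigma_{\min}(B)^{-1}$ — and it is only after $\norma M$ is pinned down that controlling $\norma{M^{-1}}$ and $\tfrac{1}{2\ii}(M-M^\ast)$ from below becomes routine. Everything else ($\C$-linearity and self-adjointness of $\Xi$, positivity of $\Xi$ on positive matrices via Lemma~\ref{lem: smooth}, and the elementary resolvent identity for imaginary parts) is soft.
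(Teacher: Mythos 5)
Your proof is correct and follows essentially the same route as the paper: Lemma \ref{lem: smooth} gives $\Xi\bigl(\tfrac{1}{2\ii}(Q-Q^*)\bigr)\ge c_0\om$, the inversion estimates from Lemma \ref{lem: closed} then give $\norm{F(Q)}\le(c_0\om)^{-1}$ with no a priori bound on $\norm{Q}$, and the remaining bounds are obtained by feeding this back in at the fixed point $M=F(M)$. If anything your write-up is more careful than the paper's, which compresses the bootstrap into the single line ``this implies $\Xi(Q)\in\M_N(c_0\om,K(c_0\om)^{-1})$'' (a step that really uses $Q=F(Q)$ or an a priori bound on $\norm{Q}$), whereas you make the normalized-trace convention and the precise point at which a bound on $\norm{Q}$ is needed explicit.
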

\begin{proof}
By Lemma \ref{lem: smooth}, $\im \tr Q\geq \om$ and $Q\in\Mp_N$ implies
\ben
	\tfrac{1}{2i}\Xi(Q-Q^*)\geq c_0\om\,,
\ee
i.e., $\Xi(Q)\in\M_N(c_0\om,+\infty)$. Therefore $F(Q)\in\M_N(0,(c_0\om)^{-1})$. This implies that $\Xi(Q)\in\M_N(c_0\om,K(c_0\om)^{-1})$ by the boundedness of the map $\Xi$.  Thus $F(Q)\in\M_n(c_\om, c_\om^{-1})$ for some constant $c_\om$.

\end{proof}

The following theorem says when $\im \tr m(z)$ is bounded below, then the solution $M$ to \eqref{eq: sce n} converges to $m$ in a certain sense.  Again the strategy is similar to Theorem \ref{thm: converge}, that is, we 'imbed' $M$ into the space $\cT$ and show that this gives an approximate solution that is close to $m$.  However, one needs to use a bootstrapping argument, because we will use Theorem \ref{thm: stable bulk infty} that assume the smallness of $\norm{m'-m}$ where $m'$ is the approximate solution constructed from $M$.  We do not have an a priori bound for $\norm{m'-m}$ when $z$ is close to the real axis. Therefore we need to start with $z$ far from the real axis, then iteratively get the bound close to the real axis.
\begin{theorem}\label{thm: converge local}
Assume that $\xi$ is positive definite in the sense of Definition \ref{def: pd}.   Let $m$ solve equation \eqref{eq: sce infty} and $M$ solve equation \eqref{eq: sce n}.  Fix a domain $\D\subset \C^+$ such that $\im \tr m(z)$ is bounded below by $\om>0$ on $\D$.  Then, uniformly in $\D$,
\ben
	\tr m-\tfrac{1}{N} \tr M=\O{c_\D/N}\,.
\ee
Let $\hat M$ be the discretization of $m$ defined through $\hat M_{i,i+k}:= m(i/N,k)$, then
\ben
	\norm{M-F(\hat M)} \leq c_\D/N\,.
\ee
	
\end{theorem}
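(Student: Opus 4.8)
The strategy is to re-run the argument of Theorem \ref{thm: converge}, but now keeping every implicit constant uniform over $z\in\D$; this is exactly where positive-definiteness and the bulk stability Theorem \ref{thm: stable bulk infty} are needed. As in the proof of Theorem \ref{thm: converge} I would embed $M$ into $\cT$ by $\tm(\theta,k):=M_{\floor{N\theta},\floor{N\theta}+k}$, build the auxiliary element $q\in\cT$ from $M^{-1}=-z-\Xi(M)$, note that $-q\in\cT_+$ and that $\F(\tm)$ is well defined, lies in $\cT_+$, and satisfies a perturbed form of \eqref{eq: sce infty},
\ben
	\F(\tm)=\F(\F(\tm))+r\,,\qquad \int_0^1\norm{\tm(\theta)-\F(\tm)(\theta)}\dd\theta+\int_0^1\norm{r(\theta)}\dd\theta\leq c/N\,,
\ee
where the error $r$ collects the off-diagonal tail of $\tm$ (Theorem \ref{thm: decay}), the difference between the integral operator $\Psi$ and its discretization $\Xi$ (the piecewise-Lipschitz hypothesis on $\psi$), and the near-diagonal ``continuity'' of $M^{-1}$. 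The decisive difference from Theorem \ref{thm: converge} is that, once $M$ lies in the good regime of Corollary \ref{cor: Xi} --- so that $\tm\in\cT(c_\om,c_\om^{-1})$, $\norm{M^{-1}}\leq C_\om$, $\kappa(M)\leq C_\om$, and hence the decay rate in Theorem \ref{thm: decay} is bounded away from $1$ by a constant depending only on $\om$ --- the constant $c$ above depends on $\D$ but no longer on $\eta=\im z$.

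Corollary \ref{cor: Xi} requires an a priori lower bound on $\im\tfrac1N\tr M$, which is essentially equivalent to closeness of $\tfrac1N\tr M$ to $\tr m$, so this information must be propagated by a bootstrap in $z$ (note that $\D$ is automatically bounded, since $\im\tr m(z)\to0$ as $z\to\infty$). Let $\e_\om,c_\om$ be the constants of Theorem \ref{thm: stable bulk infty}, with $\e_\om\leq\om/2$, and for $N\geq N_\D$ consider the set of $z\in\D$ satisfying both $\im\tfrac1N\tr M(N,z)>\om/2$ and $\norm{\F(\tm)(z)-m(z)}<\e_\om$ (on this set $\F(\tm)(z)$ is well defined, by Corollary \ref{cor: Xi} and the first condition). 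On this set the first condition places $M$ in the good regime, so the displayed perturbed equation holds with $\norm r\leq c_\D/N\leq\e_\om$; Theorem \ref{thm: stable bulk infty} then applies --- its hypotheses being precisely the second condition and this bound on $r$ --- and gives $\norm{\F(\tm)-m}\leq c_\om\norm r\leq c_\D/N$, which together with $\tr\tm=\tfrac1N\tr M$ and the $L^1$ bound on $\tm-\F(\tm)$ also gives $\im\tfrac1N\tr M\geq\im\tr m-c_\D/N\geq\om-c_\D/N$. Thus for $N\geq N_\D$ both defining inequalities are improved with room to spare, so the set is open in $\D$ (by continuity of $z\mapsto M(N,z)$, hence of $z\mapsto\F(\tm)(z)$, and of $z\mapsto m(z)$) and closed in $\D$; it is nonempty because Theorem \ref{thm: converge} puts every $z\in\D$ with $\im z$ above a $\D$-dependent threshold into it once $N$ is large; and $\D$ is connected. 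Hence the set is all of $\D$, so $\norm{\F(\tm)(z)-m(z)}\leq c_\D/N$ uniformly on $\D$ and the good regime holds throughout $\D$.

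The two stated estimates now follow as at the end of the proof of Theorem \ref{thm: converge}, with $\D$-uniform constants. Combining $\norm{\F(\tm)-m}\leq c_\D/N$ with the $L^1$ bound on $\tm-\F(\tm)$ and $\tr\tm=\tfrac1N\tr M$ gives $\tr m-\tfrac1N\tr M=\O{c_\D/N}$; the same bounds give $\int_0^1\norm{\tm(\theta)-m(\theta)}\dd\theta\leq c_\D/N$, hence $\sum_i\absa{M_{i,i+k}-\hat M_{i,i+k}}\leq c_\D$ for each $k$, hence $\norm{\Xi(M)-\Xi(\hat M)}\leq c_\D/N$ (the averaging in the definition of $\Xi$ supplying the extra $N^{-1}$, cf.\ Lemma \ref{lem: mollify}), and then, since $F(M)$ and $F(\hat M)$ are uniformly bounded on $\D$ by Corollary \ref{cor: Xi},
\ben
	\norm{M-F(\hat M)}=\norm{F(M)-F(\hat M)}\leq\norm{F(M)\pr{\Xi(M)-\Xi(\hat M)}F(\hat M)}\leq c_\D/N\,.
\ee
The main obstacle is the bootstrap of the second paragraph: one must break the circularity between ``$M$ lies in the good regime'' and ``$\tfrac1N\tr M$ is close to $\tr m$'', which forces the two-condition bootstrap set above and a careful check that each application of Theorem \ref{thm: stable bulk infty} strictly improves both conditions, so that the set is genuinely open, closed and nonempty in $\D$.
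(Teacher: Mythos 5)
Your proposal is correct and follows essentially the same route as the paper: embed $M$ into $\cT$ as $\tm$, show $\F(\tm)$ satisfies a perturbed version of \eqref{eq: sce infty} with $\O{1/N}$ error whose constants depend only on $\D$ once $M$ is in the good regime of Corollary \ref{cor: Xi}, apply Theorem \ref{thm: stable bulk infty}, and propagate the a priori input from Theorem \ref{thm: converge} across $\D$ by a continuity argument. Your open--closed--nonempty bootstrap on the set $\{\im\tfrac1N\tr M>\om/2\}\cap\{\norm{\F(\tm)-m}<\e_\om\}$ is just a cleaner packaging of the paper's ``forbidden interval'' argument (the quantity cannot lie in $(c_\D/N,N^{-1/3})$), and the derivation of the two stated estimates at the end is identical.
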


\begin{proof}
Fix a $z\in\D$ such that $\absa{\tr m-\tfrac{1}{N}\tr M}\leq N^{-1/2}$.  Therefore, $\tfrac{1}{N}\tr M\geq \om/2$ when $N$ is large. Such a $z$ exists by Theorem \ref{thm: converge}.

Define $\tm\in\cT$ by $\tm(\theta,k)=M_{\floor{N\theta},\floor{N\theta}+k}$ and $q\in\cT$ by $q(\theta,k)=(M^{-1})_{\floor{N\theta},\floor{N\theta}+k}$  if $\theta$'s $K/N$ neighborhood is contained in one of the $I_\alpha$'s (see \eqref{def: partition}) and $q(\theta,k):=i\om\de_{0k}$ otherwise. 
	We claim that $-q$ is in the domain $\cT(c_\om,c_\om^{-1})$ for some $c_\om>0$. To prove this claim, it is sufficient to show that for any $\theta,s\in[0,1)$, $-\im \sum_k q(\theta,k)e^{\ii 2\pi sk}\geq c_\om$.  For $\theta$ whose $N^{-1/2}$- neighborhood is contained in one of the $I_\alpha$'s, take a vector
	\ben
		v=(v_k)= (e^{\ii 2\pi s  k}\ind{ \absa{Nk-\theta}\leq N^{-1/2}})\,.
	\ee
The components of $v$ vanishes outside the $N^{-1/2}$- neighborhood of $\floor{N\theta}$.  Now $\E{v^* \hat H^* M \hat Hv}= \tfrac{1}{N}\sum_{i',j,k,l}\xi_{i'jkl} M_{jl}  e^{\ii 2\pi s (i'-k)}\ind{\absa{Nk-\theta}\vee\absa{Ni'-\theta}\leq N^{-1/2}}$. Dividing by $\norm{v}^2$ and in view of the continuity of $M^{-1}$, we have
\ben
	\norm{v}^{-2}\E{v^* \hat H^* M \hat Hv}=-z-\sum_k q(\theta,k)e^{\ii 2\pi sk}+\O{c_\D N^{-1/2}}\,.
\ee
The left hand side has a positive imaginary part, indeed,
\ben
	 \norm{v}^{-2}\im\E{v^* \hat H^* M \hat Hv}=\norm{v}^{-2}\im\E{v^* \hat H^* \tfrac{1}{2\ii}(M-M^*) Hv}\geq c_\om\,.
\ee
Therefore, $-\im\sum_k q(\theta,k)e^{\ii 2\pi sk}\geq c_\om$ when $N$ is big enough.  For $\theta$ whose $N^{-1/2}$-neighborhood contains the endpoints of the $I_j$'s, the estimate also holds because of the continuity of $M^{-1}$.  Thus we have proved the claim.

Now we hope that $\tm$ is an approximate solution to \eqref{eq: sce infty}. However, it is not clear whether $\tm$ is in the domain $\cT_+$ or not.  Instead we prove that $\F(\tm)$ is an approximate solution, then show that $\tr\F(\tm)$ is close to $m$.  

We first show that $\tm=\F(\tm)$ approximately holds. By definition,
\be\label{eq: mq 0}
	\sum_l\tm(\theta,l)q(\theta,h-l)=\de_{0h}-w(\theta,h)\,,
\ee
where $w(\theta,h)=\sum_{\absa{l-h}\leq K}\tm(\theta,l)((M^{-1})_{\floor{N\theta+l},\floor{N\theta}+h}-q(\theta,h-l))$. Now we estimate $\norm{w(\theta)}$.  Note that $\norm{w(\theta)}\leq \sum_h\absa{w(\theta,h)}$, by the off-diagonal decay of $\tm$,
\ben
	\norm{w(\theta)}\leq c_\D \sum_h \alpha_\D^{(\absa{h}-2K)_+}\sup_{\absa{l-h}\leq K,j\in\N}\absa{((M^{-1})_{\floor{N\theta+l},\floor{N\theta}+l+j}-q(\theta,j)}\,.
\ee
Integrating over $\theta$, we have 
\be\label{eq: w bd 0}
	\int\norm{w(\theta)}\dd \theta \leq c_\D/N\,.
\ee
Now we estimate $q-(-z-\Psi(\tm))$, which is roughly the difference between an integral and its Riemann sum. For $\theta$ whose $K/N$-neighborhood is contained in one of the $I_\alpha$'s,
\be\label{eq: q psitm}
	q(\theta,k)-(-z-\Psi(\tm))(\theta,k)=\iint \psi(\floor{N\theta}/N, \phi,k,l)\tm(\phi,l)\dd \phi\dd l - \tfrac{1}{N}\sum_{j,l}\xi_{\floor{N\theta},j,k,l}M_{jl}=\O{c_\D/N}\,.
\ee
This combined with \eqref{eq: mq 0} and \eqref{eq: w bd 0} yields
\ben
	\int \norm{\tm(-z-\Psi(\tm))(\theta)}\dd \theta \leq c_\D/N\,.
\ee
Recall that $-q\in\cT(c'_\D,c_\D)$ when $N$ is large. Since $-q$ is close to $(z+\Psi(\tm))$ we have a bound $(z+\Psi(\tm))\in\cT(c'_\D,c_\D)$ when $N$ is large.  Therefore, it is comfortable to take the inverse of $-(z+\Psi(\tm))$, which is $\F(\tm)$.  Moreover, $\F(\tm)$ has exponential decaying off-diagonal entries.  Therefore, we multiply $\F(\tm)$ to the estimate above and see
\be\label{eq: m Fm}
	\int\norm{\tm-\F(\tm)}\dd\theta\leq c_\D/N\,.
\ee
Applying the map $f\mapsto -z-\Psi(f)$ to both $\tm$ and $\F(\tm)$, 
\be\label{eq: psitm}
	\norm{(-z-\Psi(\tm))-(-z-\Psi(\F(\tm)))}\leq c_\D/N\,.
\ee
Now we take the inverse of $(-z-\Psi(\tm))$ and $(-z-\Psi(\F(\tm)))$.  Because everything is bounded, we have 
\ben
	\norm{\F(\tm)-\F(\F(\tm))}\leq c_\D/N\,.
\ee
Apply Theorem \ref{thm: stable bulk infty} to conclude
\be\label{eq: Ftm}
	\norm{\F(\tm)-m}\leq c_\D/N\,.
\ee
So far we have obtained two estimates \eqref{eq: m Fm} and \eqref{eq: Ftm} assuming that  $\absa{\tr m-\tfrac{1}{N}\tr M}\leq N^{-1/2}$.

Now define $\D_N:=\{z\in\D:\norm{ \F(\tm)-m}\vee\absa{\tr m-\tfrac{1}{N}\tr M}\leq N^{-1/2}\}$.  On this domain, the above argument holds and we get \eqref{eq: Ftm}.  
Combined with \eqref{eq: m Fm} and note that $\tr \tm =\tfrac{1}{N} \tr M$ we get
\ben
	\tr m-\tfrac{1}{N} \tr M=\O{c_\D/N}\,.
\ee
Therefore, we see that when $N$ is large enough, the quantity $\norm{ \F(\tm)-m}\vee\absa{\tr m-\tfrac{1}{N}\tr M}$ cannot be in the interval $(c_\D/N, N^{-1/3})$.  Note that the quantity is continuous in $z$, therefore, it is either above $N^{-1/3}$ for all $z\in\D$ or below $c_\D/N$ for all $z\in\D$.  The latter case is true, because for any fixed $z\in\D$, $\norm{ \F(\tm)-m}\vee\absa{\tr m-\tfrac{1}{N}\tr M}\leq c_z/N$.  Thus we have proved the first claim in the theorem.

To prove the last claim of the theorem, combining \eqref{eq: m Fm} and \eqref{eq: Ftm}, we have
\ben
	\int\norm{\tm(\theta)-m(\theta)}\dd \theta\leq c_\D/N\,.
\ee
Let $\hat M$ be the discretization of $m$ defined through $\hat M_{i,i+k}:= m(i/N,k)$, we see that
\ben
	\sum_{i}\absa{M_{i,i+k}-\hat M_{i,i+k}}\leq c_\D/N\,,\forall k.
\ee 
Applying $\Xi$ to $M$ and $\hat M$, we get
\ben
	\norm{\Xi(M)-\Xi(\hat M)}\leq c_\D/N\,.
\ee
This enables us to estimate
\ben
	\norm{F(M)-F(\hat M)} \leq \norm{F(M)(\Xi(M)-\Xi(\hat M))F(\hat M)}\,.
\ee
Since $F(M)$ and $F(\hat M)$ are bounded by Corollary \ref{cor: Xi}, we have
\ben
	\norm{F(M)-F(\hat M)}\leq c_\D \norm{\Xi(M)-\Xi(\hat M)}\leq c_\D/N\,.
\ee
Thus we have proved the last claim in the theorem
\ben
	\norm{M-F(\hat M)} \leq c_\D/N\,.
\ee
\end{proof}


Now we are ready to prove the stability in the bulk. As mentioned in the beginning of this subsection, our strategy is very similar to that of Theorem \ref{thm: converge local}, that is, we 'imbed' $M$, $M'$ and $R$ into the space $\cT$, then apply Theorem \ref{thm: stable bulk infty} to get a bound on $\norma{M-M'}_\infty$.  
	
\begin{theorem}\label{thm: stable bulk}
Assume that $\xi$ is positive definite in the sense of Definition \ref{def: pd}.   Let $m$ solve equation \eqref{eq: sce infty} and $M$ solve equation \eqref{eq: sce n}.  Fix a domain $\D\subset \C^+$ such that $\im \tr m(z)$ is bounded below by $\om>0$ on $\D$.   Assume that $M'$ solve the following perturbed equation
\ben
	M'=F(M')+R\,,
\ee
then there are $\eps_\D$ and $N_\D$ such that if $\norma{M'-M}_\infty \vee\norma{R}_\infty\leq \eps_\D$, we have
\ben
	\norma{M'-M}_\infty \leq c_\D( \norma{R}_\infty+N^{-1})\,
\ee
for $N\geq N_\D$. 

\end{theorem}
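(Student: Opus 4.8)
The plan is to transplant the imbedding scheme used in the proof of Theorem~\ref{thm: converge local} to the perturbed matrix $M'$: one manufactures from $M'$ an element of $\cT$ that solves the limiting equation~\eqref{eq: sce infty} up to a small perturbation, applies the continuum stability statement Theorem~\ref{thm: stable bulk infty}, and transfers the resulting bound back to the matrix side. The additive $N^{-1}$ in the conclusion is the price of the discretization step (a Riemann sum versus an integral, plus the boundary terms near the endpoints of the partition~\eqref{def: partition}).

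First I would pass to the band truncation of $M'$, as in the proof of Theorem~\ref{thm: stable 1}: set $\hat M'_{ij}:=M'_{ij}$ for $\absa{i-j}\leq K$ and $\hat M'_{ij}:=F(M')_{ij}$ otherwise. Since $\Xi$ depends only on the near-diagonal band, $F(\hat M')=F(M')$, so $\hat M'=F(\hat M')+\hat R$ with $\hat R:=\big(R_{ij}\ind{\absa{i-j}\leq K}\big)$ a band matrix, $\norm{\hat R}\leq(2K+1)\norm{R}_\infty$. From $\norm{M'-M}_\infty\leq\eps_\D$ together with Theorem~\ref{thm: converge local} (which gives $\tfrac{1}{N}\tr M\to\tr m$, hence $\im\tfrac{1}{N}\tr M'\geq\om/4$ for $N$ large), Corollary~\ref{cor: Xi} gives $F(M')\in\M_N(c_\om,c_\om^{-1})$ and $\norm{F(M')^{-1}}\leq c_\D$; hence by Theorem~\ref{thm: decay} the off-band part $\hat M'=F(M')$ decays exponentially away from the diagonal, while its near-diagonal entries are bounded by $\norm{M}_\infty+\eps_\D$. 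Thus $\hat M'$, its ``inverse'' $F(M')=\hat M'-\hat R$, and $M$ all imbed into $\cT$ as elements whose off-diagonal tails are summable uniformly in $\theta$. Then I would run the argument of Theorem~\ref{thm: converge local} verbatim with $\hat M'$ in place of $M$ — the positive-definiteness test-vector estimate placing the imbedding of $-(-z-\Xi(\hat M'))$ in $\cT(c_\om,c_\om^{-1})$, the ``continuity of $M^{-1}$'' boundary bounds, and the Riemann-sum approximation $\Xi(\hat M')=\Psi(\tm')+\O{c_\D/N}$, with $\tm'(\theta,k):=\hat M'_{\floor{N\theta},\floor{N\theta}+k}$ — except that the exact identity used there, $M(-z-\Xi(M))=I$, is now replaced by $\hat M'(-z-\Xi(\hat M'))=I+E$, where $E:=\hat R\,(-z-\Xi(\hat M'))$ is a band matrix with $\norm{E}_\infty\leq c_\D\norm{R}_\infty$. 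Propagating this extra term through the same chain of estimates produces in $\cT$
\ben
	\norm{\tm'-\F(\tm')}\leq c_\D/N+c_\D\norm{R}_\infty\,,\qquad\norm{\F(\tm')-\F(\F(\tm'))}\leq c_\D/N+c_\D\norm{R}_\infty\,,
\ee
so that $m':=\F(\tm')$ solves~\eqref{eq: sce infty} up to a perturbation $r$ with $\norm{r}\leq c_\D/N+c_\D\norm{R}_\infty$.

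To invoke Theorem~\ref{thm: stable bulk infty} I must check its smallness hypothesis $\norm{m'-m}\leq\eps_\om$. Writing $\tm$ for the imbedding of $M$ and decomposing $\hat M'-M=\hat R+(F(M')-F(M))$ with $F(M')-F(M)=F(M')\,\Xi(M'-M)\,F(M)$ — a band matrix of size $\O{\norm{M'-M}_\infty}$ (cf.\ Lemma~\ref{lem: mollify}) sandwiched between two exponentially decaying factors — one gets $\norm{\tm'-\tm}\leq c_\D\big(\norm{M'-M}_\infty+\norm{R}_\infty\big)\leq c_\D\eps_\D$. Since the imbeddings of $-(-z-\Xi(\hat M'))$ and of $-(-z-\Xi(M))$ both lie in $\cT(c_\om,c_\om^{-1})$, the inversion map $\F$ is Lipschitz there, so $\norm{\F(\tm')-\F(\tm)}\leq c_\D\norm{\tm'-\tm}\leq c_\D\eps_\D$; combined with $\norm{\F(\tm)-m}\leq c_\D/N$ from Theorem~\ref{thm: converge local} this yields $\norm{m'-m}\leq c_\D\eps_\D+c_\D/N\leq\eps_\om$ once $\eps_\D$ is small and $N\geq N_\D$. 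Theorem~\ref{thm: stable bulk infty} then gives $\norm{m'-m}\leq c_\om\norm{r}\leq c_\D/N+c_\D\norm{R}_\infty$, and combined with $\norm{\tm'-m'}\leq c_\D/N+c_\D\norm{R}_\infty$ we obtain $\norm{\tm'-m}\leq c_\D/N+c_\D\norm{R}_\infty$. Reading off individual entries via $\absa{a(k)}\leq\norm{a}_\cK$ bounds $\absa{M'_{ij}-m(i/N,j-i)}$ for $\absa{i-j}\leq K$ by $c_\D(\norm{R}_\infty+N^{-1})$; adding $\absa{M_{ij}-m(i/N,j-i)}\leq c_\D/N$ (again Theorem~\ref{thm: converge local}) controls the near-diagonal entries of $M'-M$, after which the off-band entries follow from $M'-M=F(M')\,\Xi(M'-M)\,F(M)+R$, whose first term depends only on the already-controlled near-diagonal entries of $M'-M$ and whose second is $R$. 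This gives $\norm{M'-M}_\infty\leq c_\D(\norm{R}_\infty+N^{-1})$.

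The main obstacle is keeping every error term in the \emph{strong} norm $\norm{\cdot}=\sup_\theta\norm{\cdot}_\cK$ on $\cT$ rather than the weaker integrated norm $\int_0^1\norm{\cdot(\theta)}\dd\theta$ — in particular the estimate $\norm{\tm'-\tm}\leq c_\D\eps_\D$, which is precisely what makes the a priori hypothesis of Theorem~\ref{thm: stable bulk infty} available, and which rests on the \emph{uniform} (in $\theta$) exponential off-diagonal decay of $M$, of $F(M')$, and of the continuum solution $m$.
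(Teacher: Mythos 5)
Your architecture is the paper's own: band-truncate $M'$ so that the error becomes a band matrix (the paper does this reduction at the end rather than the beginning, which is immaterial), imbed the truncation into $\cT$, show the imbedding solves \eqref{eq: sce infty} up to an error of size $\O{N^{-1}+\norm{R}_\infty}$, verify the a priori closeness hypothesis of Theorem \ref{thm: stable bulk infty} exactly as you do — via $\norm{\F(\tm')-\F(\tm)}\leq c_\D\norm{M'-M}_\infty$ together with $\norm{\F(\tm)-m}\leq c_\D/N$ from Theorem \ref{thm: converge local} — apply that theorem, and transfer the bound back through $M'-M=F(M')\bigl(\Xi(M')-\Xi(M)\bigr)F(M)+R$. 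Up to the last step this matches the paper line by line.

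The one place your write-up would not survive as stated is the transfer back. The chain of estimates controls $\tm'-\F(\tm')$ only in the integrated norm $\int_0^1\norm{\,\cdot\,(\theta)}\dd\theta$, not in $\sup_\theta\norm{\,\cdot\,(\theta)}$: near the endpoints of the partition \eqref{def: partition} the auxiliary symbol $q$ is defined artificially, and the ``continuity of $M^{-1}$'' used in the Riemann-sum step fails for finitely many index pairs, so on a $\theta$-set of measure $\O{N^{-1}}$ the pointwise error is $\O{1}$. Consequently $\norm{\tm'-m}$ is only available in $L^1_\theta$, and ``reading off individual entries via $\absa{a(k)}\leq\norm{a}_\cK$'' does not yield $\absa{M'_{ij}-m(i/N,j-i)}\leq c_\D(\norm{R}_\infty+N^{-1})$ for rows $i$ within $\O{K}$ (or $\O{N^{1/2}}$) of a discontinuity of $\psi$. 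You flag the strong-versus-integrated issue yourself, but uniform exponential off-diagonal decay does not repair it: the defect lives in the $\theta$ variable, not the $k$ variable. The repair is the paper's: feed the integrated bound $\int\norm{\tm'(\theta)-\tm(\theta)}\dd\theta\leq c_\D(N^{-1}+\norm{R}_\infty)$ into $\Xi$, which averages over the row index and therefore upgrades $L^1_\theta$ control of the entries to $\norm{\Xi(M')-\Xi(M)}_\infty\leq c_\D(N^{-1}+\norm{R}_\infty)$; the resolvent identity above then bounds every entry of $M'-M$ at once, with no need to control the near-diagonal entries row by row first. This is the same device that makes $\norm{\F(\tm')-\F(\F(\tm'))}$ controllable in the strong norm (via $\Psi$) even though $\norm{\tm'-\F(\tm')}$ is not; with that substitution your argument closes.
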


\begin{proof}
At the end of this proof, we need the quantity $\tm$ that was defined in the proof of Theorem \ref{thm: converge local} and some estimates obtained in that proof.  First we prove the theorem under the additional assumption that $R$ is an band matrix with band width $(2K+1)$.  Later on we will remove this additional assumption.

We first get bounds on $F(M')$.  When $\e_\D$ is small enough, we have $\tfrac{1}{N}\tr M' =\tfrac{1}{N}\tr M+\O{\e_\D}$, which implies $\tfrac{1}{N}\tr M'\geq \om-\e_\D-c_\D N^{-1}$ by Theorem \ref{thm: converge local}. Therefore, $\tfrac{1}{N}\tr M'\geq \om/2$ if we take $\e_\D$ small enough and $N$ large enough.  It follows from Corollary \ref{cor: Xi} that $F(M')\in \M_N(c'_\D,c_\D)$.    

Define $m'\in\cT$ by  $m'(\theta,k)=M'_{\floor{N\theta},\floor{N\theta}+k}$ and $q\in\cT$ by $q(\theta,k)=(F(M')^{-1})_{\floor{N\theta},\floor{N\theta}+k}$  if $\theta$'s $K/N$ neighborhood is contained in one of the $I_\alpha$'s (see \eqref{def: partition}) and $q(\theta,k):=i\om\de_{0k}$ otherwise. 
	We claim that $-q$ is in the domain $\cT(c_\D,C_\D)$. To prove this claim, we need to show that $-q\in \cT(c_\D,+\infty)$, that is, for any $\theta,s\in[0,1)$, $-\im \sum_k q(\theta,k)e^{\ii 2\pi sk}\geq c_\D$.  For $\theta$ whose $N^{-1/2}$- neighborhood is contained in one of the $I_\alpha$'s, take a vector
	\ben
		v=(v_k)= (e^{\ii 2\pi s  k}\ind{ \absa{Nk-\theta}\leq N^{-1/2}})\,.
	\ee
The components of $v$ vanishes outside the $N^{-1/2}$- neighborhood of $\floor{N\theta}$.  Now $\E{v^* \hat H^* M'Hv}= \sum_{i',j,k,l}\xi_{i'jkl} M'_{jl} e^{\ii 2\pi s (i'-k)}\ind{\absa{Nk-\theta}\vee\absa{Ni'-\theta}\leq N^{-1/2}}$. Dividing by $\norm{v}^2$ and in view of the continuity of $\Xi(M')$, we have
\ben
	\norm{v}^{-2}\E{v^* \hat H^* M \hat Hv}=-z-\sum_k q(\theta,k)e^{\ii 2\pi sk}+\O{c_\D N^{-1/2}}\,.
\ee
The left hand side has a positive imaginary part because
\ben
	\norm{v}^{-2}\im \E{v^* \hat H^* M \hat Hv}=\norm{v}^{-2}\E{v^* \hat H^* \tfrac{1}{2}(M-M^*)\hat Hv}\geq c_0\,.
\ee
Therefore, $-\im \sum_k q(\theta,k)e^{\ii 2\pi sk}\geq c_\D$ when $N$ is big enough.  For $\theta$ whose $N^{-1/2}$-neighborhood contains the endpoints of the $I_j$'s, the estimate also holds because of the continuity of $\Xi(M')$.  Thus we have proved the claim.

Next, we prove that $\F(m')$ is an approximate solution.  We first show that $m'=\F(m')$ approximately holds. By definition
\be\label{eq: mq 1}
	\sum_lm'(\theta,l)q(\theta,h-l)=\de_{0h}-w(\theta,h)\,,
\ee
where $w(\theta,h)=\sum_{\absa{l-h}\leq K}m'(\theta,l)((M^{-1})_{\floor{N\theta+l},\floor{N\theta}+h}-q(\theta,h-l))$. Now we estimate $\norm{w(\theta)}$.  Note that $\norm{w(\theta)}\leq \sum_h\absa{w(\theta,h)}$, by the off-diagonal decay of $m'$,
\ben
	\norm{w(\theta)}\leq c_\D\sum_h \alpha_\D^{(\absa{h}-2K)_+}\sup_{\absa{l-h}\leq K,j\in\N}\absa{((M^{-1})_{\floor{N\theta+l},\floor{N\theta}+l+j}-q(\theta,j)}\,.
\ee
Integrating over $\theta$, we have 
\be\label{eq: w bd 1}
	\int\norm{w(\theta)}\dd \theta \leq c_\D(N^{-1}+\norm{R}_\infty)\,.
\ee
Now we estimate $q-(-z-\Psi(m'))$, which is roughly the difference between an integral and its Riemann sum. For $\theta$ whose $K/N$-neighborhood is contained in one of the $I_\alpha$'s,
\ben
	q(\theta,k)-(-z-\Psi(m'))(\theta,k)=\iint \psi(\floor{N\theta}/N, \phi,k,l)m'(\phi,l)\dd \phi\dd l - \tfrac{1}{N}\sum_{j,l}\xi_{\floor{N\theta},j,k,l}M_{jl}=\O{c_\D/N}\,.
\ee
This combined with \eqref{eq: mq 1} and \eqref{eq: w bd 1} yields
\ben
	\int \norm{m'(-z-\Psi(m'))(\theta)}\dd \theta \leq c_\D(N^{-1}+\norm{R}_\infty)\,.
\ee
Recall that $q\in\cT(c'_\D,c_\D)$ when $N$ is large. Since $-q$ is close to $(z+\Psi(m'))$ we have a bound $(z+\Psi(m'))\in\cT(c'_\D,c_\D)$ when $N$ is large.  Therefore, it is comfortable to take the inverse of $-(z+\Psi(m'))$, which is $\F(m')$.  Moreover, $\F(m')$ has exponential decaying off-diagonal entries.  Therefore, we multiply $\F(m')$ to the estimate above and see
\be\label{eq: m Fm'}
	\int\norm{m'(\theta)-\F(m')(\theta)}\dd\theta\leq c_\D(N^{-1}+\norm{R}_\infty)\,.
\ee
Applying the map $f\mapsto -z-\Psi(f)$ to both $m'$ and $\F(m')$,
\ben
	\norm{(-z-\Psi(m'))-(-z-\Psi(\F(m')))}\leq c_\D(N^{-1}+\norm{R}_\infty)\,.
\ee
Now we take the inverse of $(-z-\Psi(m'))$ and $(-z-\Psi(\F(m')))$.  Because everything is bounded, we have 
\be\label{eq: FFm'}
	\norm{\F(m')-\F(\F(m'))}\leq c_\D(N^{-1}+\norm{R}_\infty)\,.
\ee
In order to apply Theorem \ref{thm: stable bulk infty}, we need a bound for $\norm{\F(m')-m}$.  At the end of the proof of Theorem \ref{thm: converge local}, we showed \eqref{eq: Ftm} that $\norm{\F(\tm)-m}\leq c_\D/N$.  Meanwhile, it is easy to get a bound for $\norm{\F(m')-\F(\tm)}$:
\ben
	\norm{\F(m')-\F(\tm)}\leq c_\D \norm{M'-M}_\infty\leq c_\D\e_\D\,.
\ee
Therefore 
\be
	\norm{\F(m')-m}\leq \norm{\F(\tm)-m}+\norm{\F(m')-\F(\tm)}\leq c_\D (\e_\D+N^{-1})\,.
\ee
Take $\e_\D$ small enough, then apply Theorem \ref{thm: stable bulk infty} to \eqref{eq: FFm'} to get 
\ben
	\norm{\F(m')-m}\leq c_\D(N^{-1}+\norm{R}_\infty)\,.
\ee
Thus $\norm{\F(m')-\F(\tm)}\leq c_\D(N^{-1}+\norm{R}_\infty)$. Recall \eqref{eq: m Fm} and \eqref{eq: m Fm'}, we have
\ben
	\int \norm{m'(\theta)-\tm(\theta)}\dd \theta \leq c_\D(N^{-1}+\norm{R}_\infty)\,.
\ee
By definition of $m'$ and $\tm$, 
\ben
	\norm{\Xi(M')-\Xi(M)}_\infty c_\D(N^{-1}+\norm{R}_\infty)\,.
\ee
Note that $\norm{M'-M}_\infty=\norm{F(M')(\Xi(M)-\Xi(M')M+R}_\infty $, thus
\ben
	\norm{M'-M}_\infty\leq c_\D\norm{\Xi(M')-\Xi(M)} +\norma{R}_\infty \leq c_\D(N^{-1}+\norm{R}_\infty)\,.
\ee
Here we have used the exponential decay of the off-diagonals of $M'$ and $M$.  So far we have proved the theorem under the additional assumption that $R$ is a $(2K+1)$-banded matrix.
	
Now we remove the assumption that $R$ is a $(2K+1)$-banded matrix. Define $M^*$ by $M^*_{ij}:=M'_{ij}$ for $\absa{i-j}\leq K$ and $M^*_{ij}:=F(M')_{ij}$ otherwise.  Then $F(M^*)=F(M')$ because the map $F$ only depends on the near diagonal entries.  Therefore 
\ben
	M^*=F(M^*)+R^*\,,
\ee
where $R^*_{ij}=R_{ij}\ind{\absa{i-j}\leq K}$.  This reduces to the case where we assume that the error is a band matrix, hence $\norm{M^*-M}_\infty\leq c_\D(N^{-1}+\norm{R}_\infty)$. Thus,
\ben
	\norm{M'-M}_\infty\leq \norm{M^*-M}_\infty+\norm{M'-M^*}_\infty \leq c_\D(N^{-1}+\norm{R}_\infty)\,.
\ee
\end{proof}

\section{Proof of the global law and local law}\label{sec: law}
\subsection{Proof of the global law}
In order to apply Lemma \ref{lem: sce app}, we need estimates for $\Ga$ and $\ga$ for fixed $z$:
\begin{lemma}\label{lem: gG}
Recall that $q=N^{\tau}$. For any $0<\sigma\leq \tau/2$, $p\geq 2$, we have
\ben
	\ga\vee\Ga=\O{ \eta^{-1}+\absa{z}}\,,
\ee
with probability $1-c_pN^{-\sigma p+s}$.  Here $s$ is a universal constant.
\end{lemma}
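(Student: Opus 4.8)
The plan is to bound $\Ga$ deterministically and to reduce the bound on $\ga$ to a single concentration estimate for the rows of $H$, via a Schur-complement identity for the small submatrices $(G^{(\J)}_{\I,\I})^{-1}$. For $\Ga$ there is nothing probabilistic to do: $\norma{G}=\norma{(H-z)^{-1}}\le\eta^{-1}$ forces $\absa{G_{ij}}\le\eta^{-1}$ for every $i,j$, hence $\Ga=1\vee\max_{ij}\absa{G_{ij}}\le 1\vee\eta^{-1}\le\eta^{-1}+\absa{z}$ because $\absa{z}\ge\eta$.

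For $\ga$, I would fix $i$ and disjoint $\I,\J\subset[i-2K,\dots,i+2K]$, so $\absa{\I},\absa{\J}\le 4K+1$. Since $\tfrac{1}{2\ii}\big(G^{(\J)}-(G^{(\J)})^*\big)_{\I,\I}=\eta\,(G^{(\J)}(G^{(\J)})^*)_{\I,\I}$ is positive definite, $G^{(\J)}_{\I,\I}$ is invertible; applying the Schur complement formula to $H^{(\J)}-z$ (using $\I\cap\J=\emptyset$ to identify the relevant blocks of $H^{(\J)}$ with those of $H$, and observing that the complementary block $(H^{(\J)}-z)_{\I^c,\I^c}$ is block-diagonal with invertible blocks $-zI_\J$ and $H_{(\I\cup\J)^c}-z$) gives
\ben
	(G^{(\J)}_{\I,\I})^{-1}=H_{\I,\I}-zI_\I-H_{\I,(\I\cup\J)^c}\,G^{(\I\cup\J)}_{(\I\cup\J)^c,(\I\cup\J)^c}\,H_{(\I\cup\J)^c,\I}\,.
\ee
I would then bound the three summands by $\norma{H_{\I,\cdot}}$, $\absa{z}$ and $\eta^{-1}\norma{H_{\I,\cdot}}^2$ respectively — the first because $H_{\I,\I}$ is a submatrix of the $\absa{\I}$-row block $H_{\I,\cdot}$, the last because $\norma{G^{(\I\cup\J)}}\le\eta^{-1}$ — which reduces the whole statement to showing that $\norma{H_{\I,\cdot}}^2\le\sum_{k\in\I}\sum_j\absa{H_{kj}}^2=N^{-1}\sum_{k\in\I}\sum_j\absa{x_{kj}}^2$ is $\O{1}$ with the required probability.

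The remaining point is a concentration estimate for $\sum_j\absa{x_{kj}}^2$. For fixed $k$ the family $(\absa{x_{kj}}^2)_j$ is $K$-dependent and, by \eqref{eq: moments}, has $p$-th moments of order $(N/q)^{p-1}$; after rescaling by $c_N:=N^{1-\tau/2}$ the variables $a_j:=c_N^{-1}\big(\absa{x_{kj}}^2-\E{\absa{x_{kj}}^2}\big)$ satisfy the moment hypothesis of Lemma \ref{lem: ldp}, which with $A_j\equiv1$ gives a fluctuation of order $N^\sigma c_N=N^{1-\tau/2+\sigma}$ about the mean $\sum_j\E{\absa{x_{kj}}^2}\le N$; since $\sigma\le\tau/2$ this is $\O{N}$, so $\sum_j\absa{x_{kj}}^2=\O{N}$ with probability at least $1-c_pN^{-\sigma p}$. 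A union bound over the $N$ indices $k$ makes this hold simultaneously for all $k$ with probability at least $1-c_pN^{-\sigma p+1}$, and on that event $\norma{H_{\I,\cdot}}^2=\O{1}$ for every admissible triple $(i,\I,\J)$ at once, whence $\ga=1\vee\sup_{i,\I,\J}\norma{(G^{(\J)}_{\I,\I})^{-1}}=\O{\eta^{-1}+\absa{z}}$. Together with the deterministic bound on $\Ga$ this gives the lemma with, say, $s=1$.

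The main obstacle is exactly this last concentration step. Because the entries are heavy-tailed when $\tau<1$, a naive estimate (e.g.\ applying Lemma \ref{lem: ldp} to the squares without rescaling, or bounding $\max_j\absa{x_{kj}}^2$ crudely) only yields $\sum_j\absa{x_{kj}}^2=\O{N^{1+\sigma}}$, which would leave a spurious $N^\sigma$ in the final bound; the correct rescaling $c_N=N^{1-\tau/2}$ together with the hypothesis $\sigma\le\tau/2$ is precisely what recovers the sharp $\O{N}$ and hence the clean $\O{\eta^{-1}+\absa{z}}$. Everything else — the Schur identity, the positivity of $\im G^{(\J)}_{\I,\I}$, and the reduction to $\norma{H_{\I,\cdot}}$ — is routine.
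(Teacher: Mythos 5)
Your proposal is correct and follows essentially the same route as the paper: the deterministic bound $\Gamma\le\eta^{-1}$, the Schur complement identity for $(G^{(\J)}_{\I,\I})^{-1}$ with the complementary block reduced to $G^{(\I\cup\J)}_{(\I\cup\J)^c,(\I\cup\J)^c}$ bounded by $\eta^{-1}$, and concentration of the row sums $\sum_j\absa{H_{kj}}^2$ via Lemma \ref{lem: ldp}. If anything, your rescaling by $c_N=N^{1-\tau/2}$ makes explicit why the hypothesis $\sigma\le\tau/2$ removes the residual $N^{\sigma}$ factor, a point the paper's proof passes over quickly.
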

\begin{proof}
Clearly $\Ga\leq \eta^{-1}$. In order to bound $\ga$, we denote $\K:=\I\cup\J$ for any $\I,\J\subset[i-2K,\dots,i+2K]$ and an arbitrary $i$. By Schur's complement formula,
	\ben
		(G_{\J,\J}\wo{\I})^{-1}= H_{\J,\J}-z+H_{\J,\K^c} G\wo{\K}_{\K^c,\K^c} H_{\K^c,\J}\,.
	\ee
The operator norm can be estimated term by term. First,
\ben
	\norma{H_{\J,\J}-z}\leq \sqrt{\sum_{j,j'\in\J} \absa{H_{jj'}}^2}+\absa{z}\,,
\ee
which is $\O{N^\sigma \Phi}+\absa{z}$ with probability $1-c_p N^{-\sigma p}$ by Lemma \ref{lem: ldp}. 
Second,
\ben
	\norma{H_{\J,\K^c} G\wo{\K}_{\K^c,\K^c} H_{\K^c,\J}}\leq \norma{G\wo{\K}_{\K^c,\K^c} } \sum_{j\in\J,k\in\N} \absa{H_{jk}}^2\,.
\ee
Now $\norma{G\wo{\K}_{\K^c,\K^c} } $ is simply bounded by $\eta^{-1}$, while $\sum_{j\in\J,k\in\N} \absa{H_{jk}}^2=\O{1+N^\sigma \Phi}$ with probability at least $1-c_p N^{-\sigma p}$ by Lemma \ref{lem: ldp}. Putting the estimates together and take $\sigma\leq \tau/2$ we have
\ben
	\norma {(G_{\J,\J}\wo{\I})^{-1}}=\O{ \eta^{-1}+\absa{z}}\,,
\ee
with probability at least $1-c_pN^{-\sigma p}$.  The conclusion of the lemma follows from the above estimate and the definition of $\ga$.
\end{proof}
For the readers' convenience, we restate  the global law below:
\begin{theorem}
Let $\D\subset \subset\C^+$.  Let $M$ solve equation \eqref{eq: sce n} and $m$ solve equation \eqref{eq: sce infty}.  Then for arbitrary $\nu>0$, and $p$ large enough, the following estimates hold when $N\geq N_{\D,\nu,p}$.
\ben
	\P{ \sup_{i,j\in\N_N, z\in\D}{\absa{G_{ij}-M_{ij}}}\geq N^\nu \Phi } \leq N^{-\nu p}\,,
\ee
\ben
	\P{\sup_{z\in\D}\absa{\tfrac{1}{N}\tr G- \tr m }\geq N^\nu \Phi}\leq N^{-\nu p}\,.
\ee
\end{theorem}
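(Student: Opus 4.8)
The plan is to show that, with overwhelming probability, the Green's function $G(z)$ solves the finite-$N$ self-consistent equation \eqref{eq: sce n} up to a $\norm{\cdot}_\infty$-error of size $N^{2\sigma}\Phi$, and then to feed this into the stability statement Theorem \ref{thm: stable 1}. The one hypothesis of Theorem \ref{thm: stable 1} not delivered by the self-consistent equation itself is the a priori closeness $\norm{G-M}_\infty\le\eps_\D$; this I will bootstrap by a continuity argument in $z$, descending from a horizontal line at large imaginary part where $G$ and $M$ are both essentially $-z^{-1}$. At the very end the internal parameters of Section \ref{sec: derive sce} are chosen large enough relative to $\nu$ and the theorem's $p$; in particular the parameter $\sigma$ there is taken with $2\sigma\le\nu$ and $\sigma\le\tau/2$ (the latter so that Lemma \ref{lem: gG} applies), and its integer parameter is taken large enough that the eventual union bound stays below $N^{-\nu p}$.

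\emph{Step 1: the self-consistent equation for $G$.} On the compact set $\D$ the imaginary part $\eta$ is bounded below and $|z|$ bounded above, so Lemma \ref{lem: gG} gives $\Ga\vee\ga=\O{c_\D}$ on an event of probability at least $1-c_pN^{-\sigma p+s}$; on that event Lemma \ref{lem: sce app} reads $G(-z-\Xi(G))=I+R_0$ with $\norm{R_0}_\infty\le c_\D N^{2\sigma}\Phi$. To put this in the form required by Theorem \ref{thm: stable 1}, write $G=F(G)+R'$ with $R':=R_0F(G)$. Here $F(G)=(-z-\Xi(G))^{-1}$ is well defined with $\norm{F(G)}\le\eta^{-1}$: since $\tfrac1{2\ii}(G-G^*)=\eta GG^*>0$ and $\Xi(G)=\E{\hat H^*G\hat H}$ by \eqref{eq: xi rep}, one has $z+\Xi(G)\in\Mp_N$ with $\tfrac1{2\ii}\bigl((z+\Xi(G))-(z+\Xi(G))^*\bigr)\ge\eta$, so Lemma \ref{lem: closed} applies. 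Moreover $-z-\Xi(G)$ is $(2K+1)$-banded with condition number at most $(|z|+K\eta^{-1})\eta^{-1}=\O{c_\D}$, so by Lemma \ref{lem: demko} its inverse $F(G)$ has exponentially decaying off-diagonal entries at a rate uniform over $\D$; hence $\sup_j\sum_k\absa{F(G)_{kj}}\le c_\D$ and therefore $\norm{R'}_\infty\le\norm{R_0}_\infty\sup_j\sum_k\absa{F(G)_{kj}}\le c_\D N^{2\sigma}\Phi$.

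\emph{Step 2: continuity in $z$.} It is enough to prove the entrywise estimate on the enlarged compact set $\D^\sharp:=\{E+\ii\eta:\ E+\ii\eta'\in\D\text{ for some }\eta'\le\eta\le T\}\subset\subset\C^+$ for a large universal constant $T$. Fix an $N^{-10}$-net $\Lambda$ of $\D^\sharp$ (so $\absa{\Lambda}\le c_\D N^{20}$) organised into vertical chains. Both $z\mapsto G(z)$ and $z\mapsto M(z)$ are Lipschitz on $\D^\sharp$ with a $\D$-uniform constant (using $\pt_zG=G^2$ and the analyticity of $M$ from Theorem \ref{thm: solvable}), so $\norm{G-M}_\infty$ varies by at most $c_\D N^{-10}$ between adjacent nodes. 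At the top node $E+\ii T$ of each chain, $G+z^{-1}=z^{-1}HG$ and $M+z^{-1}=-z^{-1}\Xi(M)M$ give $\norm{G-M}\le c_\D(\norm H+1)T^{-2}$, which is $\le\eps_\D$ with overwhelming probability once $T$ is large (using the standard bound $\norm H=\O{1}$, a consequence of \eqref{eq: moments}); hence there Step 1 and Theorem \ref{thm: stable 1} yield $\norm{G-M}_\infty\le c_\D N^{2\sigma}\Phi$. Descending a chain: whenever $\norm{G-M}_\infty\le\eps_\D$ at a node, Step 1 and Theorem \ref{thm: stable 1} improve this at that node to $\norm{G-M}_\infty\le c_\D N^{2\sigma}\Phi$, and the Lipschitz bound keeps $\norm{G-M}_\infty\le c_\D N^{2\sigma}\Phi+c_\D N^{-10}\le\eps_\D$ at the next node; induction from the top gives $\norm{G-M}_\infty\le c_\D N^{2\sigma}\Phi$ at every node of $\Lambda$, on an event of probability at least $1-c_pN^{-\sigma p+s+C_\D}$. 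For an arbitrary $z\in\D$, the nearest node and one more application of the Lipschitz bound give $\sup_{i,j}\absa{G_{ij}(z)-M_{ij}(z)}\le c_\D N^{2\sigma}\Phi+c_\D N^{-10}$ on this event, which is $\le N^\nu\Phi$ for $N$ large because $2\sigma\le\nu$ and $\Phi\ge(N\eta)^{-1/2}$.

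\emph{Step 3: the trace and the choice of parameters.} Since $\absa{\tfrac1N\tr G-\tfrac1N\tr M}\le\max_i\absa{G_{ii}-M_{ii}}\le\sup_{i,j}\absa{G_{ij}-M_{ij}}$, and by Theorem \ref{thm: converge} uniformly on $\D$ one has $\absa{\tfrac1N\tr M-\tr m}\le c_\D/N\ll\Phi$, the entrywise bound already implies $\sup_{z\in\D}\absa{\tfrac1N\tr G-\tr m}\le N^\nu\Phi$ on the same event. Finally, choosing the integer parameter of Section \ref{sec: derive sce} large enough that $c_pN^{-\sigma p+s+C_\D}\le N^{-\nu p}$ for $N\ge N_{\D,\nu,p}$ gives the stated probability bounds. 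The main obstacle is Step 2: the stability input Theorem \ref{thm: stable 1} is useless until $G$ and $M$ are already $\eps_\D$-close in $\norm{\cdot}_\infty$ — information that the self-consistent equation alone does not provide — which forces the descent-in-$\eta$ bootstrap, and one must check at each node that the stability gain $c_\D N^{2\sigma}\Phi$ genuinely beats the continuity error $c_\D N^{-10}$ and that the union bound over the $\sim N^{20}$ nodes remains below the target $N^{-\nu p}$; this is precisely what dictates how large the internal parameters must be taken.
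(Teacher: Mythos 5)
Your proposal follows the paper's own route: Lemma \ref{lem: sce app} together with Lemma \ref{lem: gG} yields $G=F(G)+F(G)R$ with $\norm{F(G)R}_\infty=\O{N^{2\sigma}\Phi}$ via the off-diagonal decay from Lemma \ref{lem: demko}, then Theorem \ref{thm: stable 1} gives the entrywise bound and Theorem \ref{thm: converge} converts $\tfrac1N\tr M$ into $\tr m$. Your Step 2 descent in $\eta$ is a correct and more careful way to supply the a priori hypothesis $\norm{G-M}_\infty\le\eps_z$ of Theorem \ref{thm: stable 1} (and the uniformity in $z$), which the paper leaves implicit behind the phrase ``immediately implies'' --- defensible there because for $\eta$ bounded below the underlying Theorem \ref{thm: stable} needs no closeness hypothesis, only $G\in\Mp_N$ and smallness of the banded error --- so your argument is complete and matches the paper's in substance.
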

\begin{proof}
By Lemma  \ref{lem: sce app}  and \ref{lem: gG} and the fact that $G$ is Liptchitz on $\D$ we have
\ben
	G(-z-\Xi(G))=I+R\,,
\ee
where $\sup_{z\in\D} \norma{R}_\infty =\O{N^{2\sigma}\Phi}$ with probability at least $1-c_pN^{-\sigma p+s}$. In other words,
\ben
	G=F(G)+F(G)R\,.
\ee
Here the error term satisfies $\norma{F(G)R}_\infty=\O{\norma{R}_\infty}$ by Lemma \ref{lem: demko}. Now Theorem \ref{thm: stable 1} immediately implies the conclusion, with $\tr m$ replaced by $\tfrac{1}{N}\tr M$.  The proof is concluded by using Theorem \ref{thm: converge} and taking $\nu<\sigma$ small enough.

\end{proof}
\subsection{Proof of the local law}
In order to apply Lemma \ref{lem: sce app}, we need estimates for $\Ga$ and $\ga$.  The estimate for $\ga$ is easy to get when the matrix entries are independent, since the off-diagonals of $G$ are small.  In our case where $G$ has possibly big off-diagonal entries, the estimate for $\ga$ relies on the properties of the solution $M$.
\begin{lemma}\label{lem: ga}
Assume that $M\in\M_N(\beta,\beta^{-1})$ solves equation \eqref{eq: sce n}.  There is an $\e_\beta$ and a $c_\beta$ such that given $\norma{G-M}_\infty\leq \e_\beta$, we have
\ben
	\ga\vee\Ga\leq c_\beta\,.
\ee
\end{lemma}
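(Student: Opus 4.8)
The plan is to bound $\Ga$ and $\ga$ separately, using only the a priori closeness $\norma{G-M}_\infty\le\e_\beta$ together with one elementary structural fact about $M$: every principal submatrix of $M$ inherits the lower bound $\tfrac{1}{2\ii}(\,\cdot\,-\,\cdot^*)>\beta$, and hence is invertible with inverse of norm at most $\beta^{-1}$ (the argument is exactly the one in the proof of Lemma \ref{lem: closed}: for a principal submatrix $M_{\S,\S}$ one has $\tfrac{1}{2\ii}(M_{\S,\S}-M_{\S,\S}^*)=\big(\tfrac{1}{2\ii}(M-M^*)\big)_{\S,\S}\ge\beta I_\S$, so $\inf_{\norma v=1}\norma{M_{\S,\S}v}\ge\inf_{\norma v=1}|v^*M_{\S,\S}v|\ge\beta$). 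The bound on $\Ga$ is then immediate: $|M_{ij}|\le\norma{M}\le\beta^{-1}$ for all $i,j$, so $|G_{ij}|\le\beta^{-1}+\e_\beta$, whence $\Ga\le 1\vee(\beta^{-1}+\e_\beta)\le c_\beta$ once $\e_\beta\le 1$.

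For $\ga$, fix $i$ and disjoint $\I,\J\subset[i-2K,\dots,i+2K]$ and put $\K:=\I\cup\J$, so $|\K|\le 4K+1$ and $\K=\I\sqcup\J$. First I would pass from the entrywise bound to an operator-norm bound on the bounded-size block: $\norma{G_{\K,\K}-M_{\K,\K}}\le\norma{G_{\K,\K}-M_{\K,\K}}_{\mathrm{HS}}\le(4K+1)\norma{G-M}_\infty\le(4K+1)\e_\beta$. Since $\norma{M_{\S,\S}^{-1}}\le\beta^{-1}$ for $\S=\J$ and $\S=\K$ by the observation above, a Neumann-series argument shows that for $\e_\beta\le\beta/(2(4K+1))$ both $G_{\J,\J}$ and $G_{\K,\K}$ are invertible with $\norma{(G_{\K,\K})^{-1}}\le 2\beta^{-1}$. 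Now comes the one genuine idea: write $G_{\K,\K}$ in $2\times 2$ block form along $\K=\I\sqcup\J$; by the resolvent identity \eqref{eq: res diag} applied with $\T=\J$ one has $G_{\I,\I}^{(\J)}=G_{\I,\I}-G_{\I,\J}(G_{\J,\J})^{-1}G_{\J,\I}$, i.e.\ $G_{\I,\I}^{(\J)}$ is exactly the Schur complement of the block $G_{\J,\J}$ in $G_{\K,\K}$. Since $G_{\J,\J}$ is invertible, the Schur complement inversion formula gives $(G_{\I,\I}^{(\J)})^{-1}=\big((G_{\K,\K})^{-1}\big)_{\I,\I}$, and therefore $\norma{(G_{\I,\I}^{(\J)})^{-1}}\le\norma{(G_{\K,\K})^{-1}}\le 2\beta^{-1}$. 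Taking the supremum over $i,\I,\J$ yields $\ga\le 1\vee 2\beta^{-1}$, and combining with the bound on $\Ga$ gives the claim with $c_\beta:=1\vee 2\beta^{-1}$ and $\e_\beta:=\min\{1,\beta/(2(4K+1))\}$.

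I do not expect a serious obstacle here; the only thing one has to get right is \emph{what} to perturb. Trying to control $G^{(\J)}$ directly via a Schur complement expansion produces the factor $G^{(\K)}_{\K^c,\K^c}$, whose only cheap bound is $\eta^{-1}$ and is useless in the bulk regime $\eta\gtrsim N^{-1}$. The point is instead to recognize $(G_{\I,\I}^{(\J)})^{-1}$ as a block of $(G_{\K,\K})^{-1}$, where $G_{\K,\K}$ is a matrix of fixed bounded size that sits within distance $O(\e_\beta)$ of the deterministic matrix $M_{\K,\K}$, whose relevant principal submatrices are uniformly invertible by the hypothesis $M\in\M_N(\beta,\beta^{-1})$. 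After that, everything reduces to the stability of matrix inversion under small perturbations, which is completely routine.
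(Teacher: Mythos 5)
Your proof is correct, and at the crucial step it takes a genuinely different (and more elementary) route than the paper. Both arguments start identically: the bound on $\Ga$ is immediate from $\norm{M}\le\beta^{-1}$, and for $\ga$ both identify $G^{(\J)}_{\I,\I}$ with the Schur complement $G_{\I,\I}-G_{\I,\J}(G_{\J,\J})^{-1}G_{\J,\I}$ via \eqref{eq: res diag} and then perturb around $M$. The difference lies in how the invertibility of the limiting object is established. The paper uses the self-consistent equation to write $M^{(\J)}_{\I,\I}$ as a block of $\bigl((-z-\Xi(M))_{\J^c,\J^c}\bigr)^{-1}$ and lower-bounds $\inf_{\norm{v}=1}\absa{v^*M^{(\J)}_{\I,\I}v}$ by $c\beta^3$ via Lemma \ref{lem: smooth}, i.e.\ via the positive definiteness of $\xi$. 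You instead observe that the hypothesis $\tfrac{1}{2\ii}(M-M^*)>\beta$ passes to every principal submatrix, so that $M_{\J,\J}$ and $M_{\K,\K}$ are invertible with inverses of norm at most $\beta^{-1}$, and then read off $(G^{(\J)}_{\I,\I})^{-1}=\bigl((G_{\K,\K})^{-1}\bigr)_{\I,\I}$ from the block-inversion formula. Your version needs neither the self-consistent equation nor Lemma \ref{lem: smooth} (hence not the positive definiteness of $\xi$), yields the cleaner constant $2\beta^{-1}$ in place of $c\beta^{-3}$, and makes explicit the perturbation bookkeeping (passing from $\norm{\cdot}_\infty$ to the operator norm on a block of size at most $4K+1$, then a Neumann series) that the paper compresses into the phrase ``$G^{(\J)}_{\I,\I}=M^{(\J)}_{\I,\I}+\O{\e_\beta}$''. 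Your closing remark about why the Schur-complement-of-$H$ expansion used in Lemma \ref{lem: gG} is useless when $\eta\sim N^{-1}$ is also exactly the right diagnosis of what the lemma is for.
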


\begin{proof}
The bound for $\Ga$ is obviously true. In view of definition \eqref{def: ga} of $\ga$, it remains to get a uniform bound for $G_{\I,\I}\wo{\J}$, where $\I\cap\J=\emptyset$, $\I\cup\J\in[i-2K,\dots,i+2K]$, $i\in\N$.  It is sufficient to get a uniform bound for $M_{\I,\I}\wo{\J}:= M_{\I,\I}-M_{\I,\J}(M_{\J,\J})^{-1}M_{\J,\I}$, since $G_{\I,\I}\wo{\J}= M_{\I,\I}\wo{\J}+\O{\e_\beta}$.  Note that $ M\wo{\J}_{\I,\I}=\left(-z-(\Xi(M))\wo{\J}\right)^{-1}_{\I,\I}$, therefore 
\ben
	\inf_{v\in\C^{\I},\norma{v}=1}\absa{ v^*M\wo{\J}_{\I,\I}v}\geq \inf_{v}\frac{\absa{\im v^*\Xi (M)v}}{\norma{(-z-\Xi(M))v}^2}\,.
\ee
which is bounded below by $c\beta^3$, according to Lemma \ref{lem: smooth}.  Therefore, $\norma{(M_{\I,\I}\wo{\J})^{-1}}\leq c\beta^{-3}$ when $\e_\beta$ is small enough.  
\end{proof}

For the readers' convenience, we restate Theorem \ref{thm: local law} below:
\begin{theorem}
Assume that $\xi$ is positive definite in the sense of Definition \ref{def: pd}.   Let $m$ be the solution of equation \eqref{eq: sce infty}.  Fix a bounded domain $\D\subset \C^+$ such that $\im \tr m$ is bounded below by $\om>0$. For arbitrary $\nu\in(0,1]$ let $\D_\nu\wo{N}:=\{z=E+\ii \eta \in \D: \eta>N^{-1+\nu}\}$.
Then for $\sigma$ small enough and $p\geq 100\sigma^{-1}$, the following estimates hold for all $N\geq N_{\om,\sigma,p}$
\ben
	\P{ \sup_{i,j\in\N_N\,,z\in\D_\nu}{\absa{G_{ij}-M_{ij}}}\geq N^\sigma \Phi } \leq N^{-\sigma p}\,,
\ee
\ben
	\P{\sup_{z\in\D_\nu}\absa{\tfrac{1}{N}\tr G- \tr m }\geq N^\sigma \Phi}\leq N^{-\sigma p}\,.
\ee
\end{theorem}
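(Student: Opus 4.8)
The plan is to run a continuity (bootstrap) argument in $\eta=\im z$: starting from $\im z$ of order one, where the global law already gives control, one decreases $\eta$ down to $N^{-1+\nu}$, at each height closing the self-consistent equation of Lemma~\ref{lem: sce app} and then applying the bulk stability of Theorem~\ref{thm: stable bulk} to control $\Lambda(z):=\norma{G(z)-M(z)}_\infty$. The trace estimate then follows from $\absa{\tfrac1N\tr G-\tfrac1N\tr M}\le\Lambda$ together with $\absa{\tfrac1N\tr M-\tr m}\le c_\D/N$ (Theorem~\ref{thm: converge local}), so the whole task is to show $\sup_{z\in\D_\nu\wo N}\Lambda(z)\le N^\sigma\Phi$ with the stated probability. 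First I would collect the deterministic facts about $M$ on $\D$: since $\im\tr m\ge\om$ there, Theorem~\ref{thm: converge local} gives $\im\tfrac1N\tr M\ge\om/2$ for $N$ large, and then Corollary~\ref{cor: Xi} places $M\in\M_N(\beta,\beta^{-1})$ uniformly on $\D$ with $\beta=c_\om$ (using that $\absa z$ is bounded on $\D$, shrinking $\om$ if needed), together with $\norm{M^{-1}}\le C_\om$. It is harmless to enlarge $\D$ upward so that each vertical slice reaches $\im z=1$, since $\tr m$ is a Stieltjes transform and hence $\im\tr m$ is automatically bounded below on compact subsets of $\C^+$. Finally, by Theorem~\ref{thm: solvable}, $M(z)$ is holomorphic with $\norm{M(z)}\le4\eta^{-1}$, so Cauchy's formula on a disc of radius $\eta/2$ gives $\norm{\partial_zM}\le CN^2$ on $\D_\nu\wo N$, while $\norm{\partial_zG}=\norm{G^2}\le\eta^{-2}\le N^2$ there as well.

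The key step is the following closing implication, proved pointwise. Fix $z\in\D_\nu\wo N$ and condition on $\{\Lambda(z)\le\e_*\}$, where $\e_*$ is the smaller of the thresholds of Lemma~\ref{lem: ga} and Theorem~\ref{thm: stable bulk}. On this event Lemma~\ref{lem: ga} gives $\Ga\vee\ga\le c_\beta$, so Lemma~\ref{lem: sce app} yields $G(-z-\Xi(G))=I+R_0$ with $\norma{R_0}_\infty=\Om{N^{2\sigma}\Phi}$, i.e.\ $G=F(G)+F(G)R_0$. Moreover $-z-\Xi(G)=M^{-1}+O(\Lambda)$ in operator norm (Lemma~\ref{lem: mollify}), so for $\Lambda$ small $F(G)$ has bounded norm and bounded condition number, hence exponentially decaying off-diagonals (Lemma~\ref{lem: demko}); therefore $\norma{F(G)R_0}_\infty\le C\norma{R_0}_\infty=\Om{N^{2\sigma}\Phi}$. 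On $\D_\nu\wo N$ one has $\Phi\le N^{-\nu/2}+N^{-\tau/2}$, so for $\sigma<\tfrac14\min(\nu,\tau)$ and $N$ large the hypotheses $\Lambda(z)\vee\norma{F(G)R_0}_\infty\le\e_*$ of Theorem~\ref{thm: stable bulk} hold, and applying it with $M'=G\in\Mp_N$ and error $F(G)R_0$ gives $\Lambda(z)\le c_\D(\norma{F(G)R_0}_\infty+N^{-1})=\Om{N^{2\sigma}\Phi}$. In summary: whenever $\Lambda(z)\le\e_*$, one has $\Lambda(z)\le cN^{2\sigma}\Phi<\tfrac12\e_*$ with probability $\ge1-c_pN^{-\sigma p+s}$.

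I would then globalise via a net and a continuity argument. Put a grid $\Sigma\subset\D_\nu\wo N$ of spacing $N^{-10}$; the Lipschitz bounds give $\sup_{\D_\nu\wo N}\Lambda\le\sup_\Sigma\Lambda+N^{-5}$ and $\absa\Sigma\le N^{C}$, so a union bound makes the closing implication hold simultaneously on all of $\Sigma$ with probability $\ge1-c_pN^{-\sigma p+s'}$. On the complement of this bad event, fix $E$ in the projection and walk down the corresponding vertical slice of $\D_\nu\wo N$: at the top (where $\im z$ is of order one) the global law, Theorem~\ref{thm: global law}, gives $\Lambda<\tfrac12\e_*$; as $\eta$ decreases $\Lambda$ is continuous, and by the closing implication on the net (plus the $N^{-5}$ slack) it can never enter $(cN^{2\sigma}\Phi,\e_*]$, hence stays below $cN^{2\sigma}\Phi$ all the way down. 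This yields $\sup_{\D_\nu\wo N}\Lambda\le cN^{2\sigma}\Phi$; after replacing $\sigma$ by a constant fraction of itself (to absorb $c$ and the net error into $N^\sigma\Phi$) and taking $p$ large, both claimed estimates follow, the trace bound via $\absa{\tfrac1N\tr M-\tr m}\le c_\D/N\ll\Phi$.

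\textbf{Where the difficulty lies.} The genuine obstacle is the circularity of the first step: the bound $\Ga\vee\ga\le c_\beta$ needed to invoke Lemma~\ref{lem: sce app} is available only once $\Lambda$ is already small (Lemma~\ref{lem: ga}), while the smallness of $\Lambda$ is exactly what is being proved. Breaking this requires the continuity argument, and the care goes into: (i) the deterministic a priori control of $M$ (good domain and Lipschitz in $z$), resting on Theorem~\ref{thm: converge local}, Corollary~\ref{cor: Xi} and Theorem~\ref{thm: solvable}; (ii) the base case of the induction at $\im z$ of order one, which uses the global law together with the harmless enlargement of $\D$ so that every slice reaches imaginary part one; and (iii) verifying that the smallness hypotheses of Theorem~\ref{thm: stable bulk} hold all along each slice, which is precisely where $\eta\ge N^{-1+\nu}$ and $\sigma$ small (so that $N^{2\sigma}\Phi\to0$) are used. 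A lesser technical point is the estimate $\norma{F(G)R_0}_\infty\lesssim\norma{R_0}_\infty$, which needs the off-diagonal decay of $F(G)$ from Lemma~\ref{lem: demko}, hence that $-z-\Xi(G)$ has bounded condition number on the good event.
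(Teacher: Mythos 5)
Your proposal is correct and follows essentially the same route as the paper: derive the approximate fixed-point equation from Lemma \ref{lem: sce app}, use Lemma \ref{lem: ga} to control $\Ga,\ga$ on the event where $\norma{G-M}_\infty$ is already small, invoke the bulk stability Theorem \ref{thm: stable bulk}, and close the circularity with a net plus a continuity argument anchored by the global law, with the trace bound coming from Theorem \ref{thm: converge local}. The only cosmetic difference is that you phrase the continuity step as a bootstrap down each vertical slice while the paper phrases it as a forbidden-interval dichotomy (the quantity cannot lie in $(N^{5\sigma}\Phi,N^{-\sigma})$, hence by continuity is uniformly small or uniformly large, the latter being excluded at a reference point $z_0$); these are the same argument.
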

\begin{proof}
Let $\sigma =( \nu\wedge \tau)/20$, so that $N^{5\sigma}\Phi <N^{-\sigma}$ on $\D_\nu$ . All the estimates in this proof hold when $N\ge N_{\om,\sigma,p}$, where $N_{\om,\sigma,p}$ changes from line to line, but only for finite times.  For every $N$, choose a discrete subset $\La\subset \D_\nu$ such that the $N^{-10}$ neighborhood of $\La$ contains $\D_\nu$.  By Lemma \ref{lem: sce app}, we have 
\ben
	\P{\norma{G(-z-\Xi(G))-I}_\infty \geq N^{2\sigma} \Phi\Ga^5\ga^3\,,\exists z\in\La}\leq c_pN^{-\sigma p+s}\,.
\ee
The Green's function $G$ is Liptchitz in $\D_\nu$, with Liptchitz constant $N^2$, since $\absa{\partial_z G_{ij}}=\absa{\sum_k G_{ik}G_{kj}}\leq \eta^{-2}$.  Therefore, the value of $G$ at any point in $\D_\nu$ can be well approximated by the points in $\La$, with error less than $N^{-8}$.  Hence
\ben
	\P{\norma{G(-z-\Xi(G))-I}_\infty \geq N^{3\sigma} \Phi\Ga^5\ga^3\,,\exists z\in\D_\nu}\leq c_pN^{-\sigma p+s}\,.
\ee
By Lemma \ref{lem: ga}, $\norma{G-M}_\infty\leq N^{-\sigma}$ implies that $\Ga\vee \ga=\O{1}$. Therefore we can bound the probability of a smaller set
\ben
	\P{\norma{G(-z-\Xi(G))-I}_\infty\geq N^{4\sigma} \Phi\,,\norma{G-M}_\infty\leq N^{-\sigma}\,,\exists z\in\D_\nu}\leq c_pN^{-\sigma p+s}\,.
\ee
By Theorem \ref{thm: stable bulk}, $\norma{G(-z-\Xi(G))-I}_\infty<N^{4\sigma} \Phi$ implies $\norma{G-M}_\infty\leq N^{5\sigma}\Phi$. Thus the inequality becomes
\ben
	\P{\norma{G-M}_\infty\in (N^{5\sigma}\Phi, N^{-\sigma})\,,\exists z\in\D_\nu}\leq c_pN^{-\sigma p+s}\,.
\ee
In other words,
\ben
	\P{\norma{G-M}_\infty \leq N^{5\sigma}\Phi\,,\forall z\in\D_\nu} +\P{\norma{G-M}_\infty \geq N^{-\sigma}\,,\forall z\in\D_\nu}\geq 1-c_pN^{-\sigma p+s}\,.
\ee
Let $z_0$ be a fixed number in $\D$, the second probability is less than $\P{\norma{G(z_0)-M(z_0)}_\infty \geq N^{-\sigma}}$, which is less than $c_pN^{\sigma p+s}$ by Theorem \ref{thm: global law}.  Therefore,
\ben	
	\P{\norma{G-M}_\infty \leq N^{5\sigma}\Phi\,,\exists z\in\D_\nu} \geq 1-c_pN^{-\sigma p+s}\,.
\ee
The first estimate in the theorem follows from absorbing the constants $c_p$ and $s$ by $N^{-\sigma p/2}$ then replacing $\sigma$ by $5\sigma$ and replacing $p/10$ by $p$.  The second estimate follows from the first estimate and Theorem \ref{thm: converge}.
	
\end{proof}
\section{Proof of bulk universality}\label{sec: bulk universality}
The strategy is as follows: first, we run an Ornstein-Uhlenbeck process on the matrix entries that has the same correlation structure as $(x_{ij})$ and show that this does not change the local statistics as long as $t\leq N^{-1+\e}$. Second, we prove that the local statistics  at time $t= N^{-1+\e}$ agrees with the local statistics of the GOE. Then we can conclude that the local statistics of the original matrix agrees with the GOE.
\subsection{Correlated Ornstein-Uhlenbeck process}
Let $(B_{ij}(t))_{1\leq i\leq j\leq N}$ be a family of Brownian motions that has the same correlation structure as $(x_{ij})$ does, i.e.,
\ben
	\E {B_{ij}(t)B_{i'j'}(t)}=\xi_{iji'j'}t\,.
\ee
Define $x_{ij}(t)$ through
\ben
	\dd x_{ij}={\dd B_{ij}}-\frac{x_{ij}}{2}\dd t\,.
\ee
Then we define $X(t)$ to be the matrix with upper diagonal part equal to $(x_{ij}(t))_{1\leq i\leq j\leq N}$.  It is easy to check that $X(t)$ has the same correlation structure as $X(0)$ does, and that $(x_{ij}(t))$ satisfy the same moment bounds \eqref{eq: moments}.  Therefore, the local law holds for each $t\geq 0$.  We shall need the following lemma in the sequel:
\begin{lemma}\label{lem: decouple}
	Let $x=(x_1,\dots,x_m)$ be an array of $K$-dependent real centered random variables such that $ \sup_{k} \left(\E {\absa{x_k^3}}\right)^{1/3}\leq \kappa_3$.  Let $f$ be a $C^2$ function on $\R^m$.  Then,
	\ben
		\E{f(x)x_i}=\sum_{k}\E{\pt_{k} f (x)} \E{x_ix_k}+\O{\norma{D^2f}_\infty \kappa_3^3}\,.
	\ee
\end{lemma}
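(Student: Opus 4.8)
The plan is to expand $f$ to second order around configurations in which the coordinates that can correlate with $x_i$ (and then with the surviving $x_k$'s) are frozen to $0$, and to use $K$-dependence to factor the expectations that remain.

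Write $A=\{j:\absa{j-i}\le K\}$ and $C=\{j:\absa{j-i}\le 2K\}$, so $\absa A,\absa C=\O1$ and $A\subseteq C$. Given a set $T$, let $x^{(T)}$ denote $x$ with all coordinates indexed by $T$ set to $0$; by $K$-dependence $x^{(A)}$ is a function of $(x_l)_{\absa{l-i}>K}$ and hence independent of $x_i$. Taylor's formula with integral remainder gives
\[
  f(x)=f\big(x^{(A)}\big)+\sum_{k\in A}\pt_k f\big(x^{(A)}\big)\,x_k+R,\qquad \absa R\le\tfrac12\norma{D^2f}_\infty\Big(\sum_{k\in A}\absa{x_k}\Big)^2.
\]
Multiplying by $x_i$ and taking expectations, $\E{f(x^{(A)})x_i}=\E{f(x^{(A)})}\E{x_i}=0$ and $\E{Rx_i}=\O{\norma{D^2f}_\infty\kappa_3^3}$, the latter because $\E{\absa{x_kx_lx_i}}\le\kappa_3^3$ by H\"older and $\absa A=\O1$. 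So the task reduces to evaluating $\sum_{k\in A}\E{\pt_k f(x^{(A)})\,x_kx_i}$.

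For each $k\in A$, $C$ contains the full $K$-neighbourhoods of both $i$ and $k$. Since $\pt_k f$ is $\norma{D^2f}_\infty$-Lipschitz, $\big|\pt_k f(x^{(A)})-\pt_k f(x^{(C)})\big|\le\norma{D^2f}_\infty\sum_{l\in C\setminus A}\absa{x_l}$, hence
\[
  \E{\pt_k f(x^{(A)})\,x_kx_i}=\E{\pt_k f(x^{(C)})\,x_kx_i}+\O{\norma{D^2f}_\infty\kappa_3^3}.
\]
Now $\pt_k f(x^{(C)})$ is a function of $(x_l)_{\absa{l-i}>2K}$ only, and by $K$-dependence this family is jointly independent of the pair $(x_i,x_k)$, so $\E{\pt_k f(x^{(C)})\,x_kx_i}=\E{\pt_k f(x^{(C)})}\E{x_kx_i}$. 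Finally $\E{\pt_k f(x^{(C)})}=\E{\pt_k f(x)}+\O{\norma{D^2f}_\infty\kappa_3}$ (the two configurations differ in $\O1$ coordinates, each with $\E{\absa{x_l}}\le\kappa_3$) and $\absa{\E{x_kx_i}}\le\kappa_3^2$; summing over the $\O1$ indices $k\in A$ gives $\sum_{k\in A}\E{\pt_k f(x)}\E{x_kx_i}+\O{\norma{D^2f}_\infty\kappa_3^3}$. Since $\E{x_ix_k}=0$ whenever $\absa{k-i}>K$, this equals $\sum_k\E{\pt_k f(x)}\E{x_ix_k}$, as claimed.

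The only point requiring care — the main obstacle — is organising the two rounds of freezing so that the factorisation $\E{\pt_k f(x^{(C)})\,x_ix_k}=\E{\pt_k f(x^{(C)})}\E{x_ix_k}$ is legitimate: one must keep $C$ large enough to absorb the $K$-neighbourhoods of $i$ and of $k$ and to ensure that the surviving coordinates of $\pt_k f$ are jointly, not merely pairwise, independent of $\{x_i,x_k\}$. All remaining estimates are routine Taylor bounds combined with H\"older's inequality, and every combinatorial factor is absorbed into the implicit constants because $K$ is fixed.
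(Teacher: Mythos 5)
Your proof is correct and follows essentially the same route as the paper's: a second-order Taylor expansion of $f$ around the configuration with the $K$-neighbourhood of $i$ zeroed out (giving $\E{f(x^{(A)})x_i}=0$ and a remainder of order $\norma{D^2f}_\infty\kappa_3^3$), followed by enlarging the zeroed set to the $2K$-neighbourhood so that the derivative factor decouples from the pair $(x_i,x_k)$, and then undoing the freezing inside $\E{\pt_kf}$ at cost $\O{\norma{D^2f}_\infty\kappa_3}\cdot\absa{\E{x_ix_k}}$. The joint (not merely pairwise) independence of $(x_i,x_k)$ from the far coordinates, which you rightly flag as the crux, is the same reading of $K$-dependence that the paper's own factorisation relies on.
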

\begin{proof}
	If $f$ is a linear function, then the equality is exact without the error term.  In general, let $\T$ be the set of indices correlated with $i$.   Denote $x\wo{\T}:=(x_k\ind{k\notin\T})$.  By Taylor's expansion,
	\be\label{eq: taylor}
		f(x)=f(x\wo{\T})+\sum_{k\in\T} \pt_{k}f(x\wo{\T}) x_k +\frac{1}{2}\sum_{k,l\in\T}\int_0^1 (1-t) \pt_{kl} f(x\wo{\T}+t(x-x\wo{\T}))x_kx_l\dd t\,.
	\ee
Let $\S\subset \T^c$ be the set of indices correlated with $\T$, and $\U=\S\cup\T$.  Note that $\sum_{k\in\T} \pt_{k}f(x\wo{\T}) x_k =\sum_{k\in\T} \pt_{k}f(x\wo{\U}) x_k +\sum_{k\in \T\,, l\in\S}\int_0^1(1-t)\pt_{kl}f(x\wo{\U}+t(x\wo{\T}-x\wo{\U}))x_kx_l\dd t$.  Denote  
\be\label{eq: def thetax}
	\theta x :=(x_k \ind{k\in\U}\theta_k+x_k\ind{k\notin\U})\,,
\ee
 for $\theta\in[0,1]^\U$,thus \eqref{eq: taylor} can be written
\ben
	f(x)=f(x\wo{\T})+\sum_{k\in\T}\pt_k f(x\wo{\U})x_k+\O{\sum_{k,l\in\U} \sup_{\theta\in[0,1]^\U} \absa{f(\theta x)} \absa{x_kx_l}}\,.
\ee
Now multiply by $x_1$ and take expectation,
\be\label{eq: decouple}
	\E {f(x)x_1}=\sum_{k\in\T}\E{\pt_{k} f (x) }\E{x_1x_k}+ \O{\E {\sum_{k,l\in\U} \sup_{\theta\in[0,1]^\U }\absa{\pt_{kl}f(\theta x)} \absa{x_lx_kx_1}+\absa{x_l\E{x_1x_k}}}}\,.
\ee
Here we have used $\pt_{k}f(x\wo{\U}) =\pt_{k}f(x)-\sum_{l\in\U}\int_0^1(1-t)\pt_{kl}f(x\wo{\U}+t(x-x\wo{\U}))x_l\dd t$.  The conclusion follows.
\end{proof}
Set $H_t:=N^{-1/2}X(t)$.  The above lemma enables us to compare $f(H_t)$ and $f(H_0)$ where $f$ is a $C^3$ function on the matrix space.
\begin{lemma}\label{lem: comparison}
Suppose $f$ is a $C^3$ function on $\C^{N\times N}$.  Then,
\ben
	\E {f(H_t)-f(H_0)}=\O{ t Nq^{-1/2}\E {\norm{D^3 f}_\infty}}\,, \forall t\in\R^+\,.
\ee
\end{lemma}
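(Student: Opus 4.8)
The plan is to set $\varphi(t):=\E{f(H_t)}$, show that $|\varphi'(t)|=\O{Nq^{-1/2}\E{\norm{D^3 f}_\infty}}$ uniformly in $t\geq 0$, and then integrate over $[0,t]$.

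Regard $\wt f(x):=f(N^{-1/2}X)$ as a $C^3$ function of the array $x=(x_{ij})_{1\leq i\leq j\leq N}$, so that $f(H_t)=\wt f(x(t))$. Applying It\^o's formula to $\wt f(x(t))$, using $\dd x_{ij}=\tfrac{1}{2}(-x_{ij})\,\dd t+\dd B_{ij}$ and $\dd\langle B_{ij},B_{kl}\rangle_t=\xi_{ijkl}\,\dd t$, and taking expectations, we get
\ben
	\varphi'(t)=-\tfrac{1}{2}\sum_{i\leq j}\E{(\partial_{x_{ij}}\wt f)(x(t))\,x_{ij}(t)}+\tfrac{1}{2}\sum_{i\leq j}\sum_{k\leq l}\xi_{ijkl}\,\E{(\partial_{x_{ij}}\partial_{x_{kl}}\wt f)(x(t))}\,.
\ee
The stochastic-integral contribution $\sum_{i\leq j}\E{\int_0^t(\partial_{x_{ij}}\wt f)(x(s))\,\dd B_{ij}(s)}$ drops out because $\norm{D^3 f}_\infty<\infty$ makes $\partial_{x_{ij}}\wt f$ grow at most quadratically while $x(s)$ has finite moments of all orders on $[0,t]$.

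Next apply Lemma \ref{lem: decouple} to each term $\E{(\partial_{x_{ij}}\wt f)(x(t))\,x_{ij}(t)}$, viewing $g_{ij}:=\partial_{x_{ij}}\wt f$ as a $C^2$ function of the $K$-dependent array $(x_{kl}(t))_{k\leq l}$. The key structural fact is that the correlated Ornstein--Uhlenbeck dynamics preserves the covariance tensor: from $\dd(x_{ij}x_{kl})$ one reads off $\tfrac{\dd}{\dd t}\E{x_{ij}(t)x_{kl}(t)}=\xi_{ijkl}-\E{x_{ij}(t)x_{kl}(t)}$, hence $\E{x_{ij}(t)x_{kl}(t)}=\xi_{ijkl}$ for all $t\geq 0$ since this holds at $t=0$. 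Therefore the main term returned by Lemma \ref{lem: decouple} is $\sum_{k\leq l}\xi_{ijkl}\,\E{\partial_{x_{kl}}g_{ij}(x(t))}=\sum_{k\leq l}\xi_{ijkl}\,\E{(\partial_{x_{ij}}\partial_{x_{kl}}\wt f)(x(t))}$, which cancels the It\^o bracket term in $\varphi'(t)$ exactly and leaves only the third-moment remainder
\ben
	|\varphi'(t)|\leq c\sum_{i\leq j}\E{\norm{D^2 g_{ij}}_\infty}\,\kappa_3^3\,,\qquad \kappa_3:=\sup_{k\leq l}\pr{\E{\absa{x_{kl}(t)}^3}}^{1/3}\,.
\ee
By the chain rule, every entrywise derivative of $\wt f$ carries a factor $N^{-1/2}$ from the normalization $H=N^{-1/2}X$ (the symmetric-matrix bookkeeping only affects the $\O{1}$ prefactor), so $\norm{D^2 g_{ij}}_\infty=\norm{D^2\partial_{x_{ij}}\wt f}_\infty\leq c\,N^{-3/2}\norm{D^3 f}_\infty$; and since $(x_{ij}(t))$ obeys the same moment bounds \eqref{eq: moments} as $(x_{ij})$, the case $p=3$ gives $\kappa_3^3\leq(N/q)^{1/2}\mu_3^3=\O{(N/q)^{1/2}}$. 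Summing over the $\O{N^2}$ pairs $i\leq j$ yields
\ben
	|\varphi'(t)|\leq c\,N^2\cdot N^{-3/2}(N/q)^{1/2}\,\E{\norm{D^3 f}_\infty}=c\,Nq^{-1/2}\,\E{\norm{D^3 f}_\infty}\,,
\ee
and integrating in $t$ proves the lemma.

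The computation is essentially mechanical. The conceptual heart is the observation that the correlated Ornstein--Uhlenbeck flow preserves the covariance tensor $\xi$: this is exactly what forces the It\^o bracket term and the decoupled drift term to cancel, so that $\varphi'(t)$ is controlled purely by the third moments of the entries, which is where the gain $q^{-1/2}$ comes from. The remaining work --- tracking the three powers of $N^{-1/2}$ through the third derivatives of $\wt f$, and the standard integrability checks (all moments of $x_{ij}(t)$ finite on $[0,t]$, and $f,Df,D^2f$ polynomially bounded because $\norm{D^3 f}_\infty<\infty$) needed to legitimize It\^o's formula under the expectation and the vanishing of the martingale term --- is routine. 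I expect the verification of covariance preservation and the resulting cancellation to be the only genuinely load-bearing step.
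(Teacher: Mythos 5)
Your proposal is correct and follows essentially the same route as the paper: It\^o's formula for $\E{f(H_t)}$, the decoupling Lemma \ref{lem: decouple} applied to the drift term, exact cancellation of the resulting main term against the It\^o bracket term (which rests on the OU flow preserving the covariance tensor $\xi$, a fact the paper asserts earlier and you verify explicitly), and a third-moment bound giving the factor $N^{-1}q^{-1/2}$ per entry. The only cosmetic difference is that you differentiate in the unnormalized variables $x_{ij}$ and track the powers of $N^{-1/2}$ separately, whereas the paper works directly with the entries $h_{ij}$ of $H$.
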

\begin{proof}
By Ito's formula,
\ben
	\dd \E {f(H_t)}=\sum_{i\leq j}\E{\frac{\pt f}{\pt h_{ij}} h_{ij}}\dd t +\tfrac{1}{N}\sum_{i\leq j\,;i'\leq j'}\E{\frac{\pt f}{\pt h_{ij}\pt h_{i'j'} }}\xi_{iji'j'}\dd t\,.
\ee	
Recall the notation \eqref{eq: def thetax}. Apply \eqref{eq: decouple} to $ \E{\frac{\pt f(H_t)}{\pt h_{ij}}}$ and denote $\T_{ij}=\{(i',j'):\absa{i-i'}\vee\absa{j-j'}\leq 2K\}$. 
\be\label{eq: ibp}\begin{split}
	&\sum_{i,j}\E{\frac{\pt f}{\pt h_{ij}}h_{ij} }=-\tfrac{1}{N}\sum_{i\leq j,i'\leq j'}\E{\frac{\pt f}{\pt h_{ij}\pt h_{i'j'} }}\xi_{iji'j'} \\&+\O{\sum_{i,j}\E{\sum_{(i',j'),(i'',j'')\in\T_{ij}}\sup_{\theta\in[0,1]^{\T_{ij}}}\absa{\frac{\pt^3 f(\theta H)}{\pt_{h_{ij}}\pt_{h_{i'j'}}\pt_{h_{i''j''}}}} \left(\absa{h_{ij}h_{i'j'}h_{i''j''}}+\absa{h_{i''j''}N^{-1}\xi_{iji'j'}}\right)}}\,.
	\end{split}
\ee
Adding the two equations above, the first term on the right hand side of \eqref{eq: ibp} cancels.  The conclusion follows from the observation that $\E{\absa{h_{ij}h_{i'j'}h_{i''j''}}}\leq \mu_3^3 N^{-1}q^{-1/2}$ and $\E{\absa{h_{i''j''}}}\leq N^{-1}$.
\end{proof}

Now we are ready to prove the following Green's function comparison lemma:

\begin{lemma}\label{lem: comparison 2}
Let $\delta>0$ be arbitrary and choose an $\eta$ such that $N^{-1-\delta}\leq \eta \leq N^{-1}$.  For any sequence of positive integers $k_1,\cdots,k_n$ and complex parameters $z_j ^m= E_j^m\pm i\eta, j=1,\dots, k_m$, $m=1,\dots,n$ with an arbitrary choice of the signs and $\rho(E_j)\geq \om$ , we have the following.  Let $G_t(z)=(H_t-z)^{-1}$ be the resolvent and let $f(x_1, x_2, \dots, x_N)$ be a test function such that for any multi-index $\alpha=(\alpha_1, \dots, \alpha_N)$ with $1\leq\absa{\alpha} \leq 3$ and for any positive, sufficiently small $\kappa$, we have 
\ben
	\max\left\{ \absa{\partial^\alpha f(x_1,\dots, x_N)} : \max_j \absa{x_j} \leq N^\kappa \right\} \leq N^{C_0\kappa}
\ee
and
\ben
	\max\left\{ \absa{\partial^\alpha f(x_1,\dots, x_N)} : \max_j \absa{x_j} \leq N^2 \right\} \leq N^{C_0}
\ee
for some constant $C_0$.
Then, for any $\delta$ with $N^{-1-\delta}\leq \eta \leq N^{-1}$ and for any choices of the signs in the imaginary part of $z^m_j$, we have
\ben
	\absa{\E{f\left( \tfrac{1}{N^{k_1}} \tr \Pi_{j=1}^{k_1}G_t(z_j^1),\dots, \tfrac{1}{N^{k_n}} \tr \Pi_{j=1}^{k_n}G_t(z_j^n)\right)} -\E{f(G_t\rightarrow G_0)}}\leq Ct N^{1+c\delta}q^{-1/2}\,,
\ee
where $c$ and $C$ are constants depending on $C_0$.
\end{lemma}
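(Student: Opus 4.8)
The plan is to compare via the correlated Ornstein--Uhlenbeck flow exactly as in the proof of Lemma~\ref{lem: comparison}, applied to the composite test function $\mathcal Q(H):=f(m_1(H),\dots,m_n(H))$, where $m_r(H):=\tfrac{1}{N^{k_r}}\tr\prod_{j=1}^{k_r}G(z_j^r)$, so that the quantity to be bounded is $\absa{\E{\mathcal Q(H_t)}-\E{\mathcal Q(H_0)}}$. Writing $\E{\mathcal Q(H_s)}$ for the flow at time $s$, differentiating in $s$ with It\^o's formula and invoking the decoupling Lemma~\ref{lem: decouple} as in the derivation of Lemma~\ref{lem: comparison}, the drift term and the second-order It\^o term cancel, leaving a third-order remainder which is a sum over $(i,j)$ and over the $\O{K^4}$ pairs $(i',j'),(i'',j'')\in\T_{ij}$ of expectations of the form $\E{\sup_{\theta\in[0,1]^{\T_{ij}}}\absa{\partial^3_h\mathcal Q(\theta H_s)}\pr{\absa{h_{ij}h_{i'j'}h_{i''j''}}+N^{-1}\absa{h_{i''j''}\xi_{iji'j'}}}}$. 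Since $\E{\absa{h_{ij}h_{i'j'}h_{i''j''}}}\leq\mu_3^3N^{-1}q^{-1/2}$ and $\E{\absa{h_{i''j''}}}\leq N^{-1/2}$, integrating over $[0,t]$ reduces the statement to showing that this third-order remainder is $\O{N^{1+c\delta}q^{-1/2}}$ per unit time, which in turn follows once we prove that with overwhelming probability, and uniformly over the interpolants $\theta$, $\norma{D^3\mathcal Q(\theta H_s)}\leq N^{c\delta}$; on the complementary event we use the crude deterministic bound $\norma{D^3\mathcal Q(\theta H_s)}\leq N^{C_1}$, which follows from $\norma{G(z)}\leq\eta^{-1}\leq N^{1+\delta}$ (hence $\absa{m_r}\leq N^2$ and $\absa{\partial^\alpha f}\leq N^{C_0}$) together with the polynomial growth of the resolvent derivatives, and whose contribution is killed by choosing the exponent $p$ in Theorem~\ref{thm: local law} so large that the exceptional probability is $\leq N^{-D}$ with $D\gg C_1$.

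The core estimate is $\norma{D^3\mathcal Q(H_s)}\leq N^{c\delta}$ with overwhelming probability. By the chain rule and the hypothesis on $f$ it is enough to control $\tfrac{1}{N^{k}}\tr\prod_j G(z_j)$ and its matrix-entry derivatives of order $\leq 3$ by $N^{c\delta}$, and for that it suffices to establish the single-scale bound: for every $b$ and every $z=E\pm\ii\eta$ with $\rho(E)\geq\om$ and $\eta\geq N^{-1-\delta}$,
\[
	\im G(z)_{bb}\leq N^{C(\nu+\sigma+\delta)}
\]
holds with overwhelming probability, where $\wt\eta:=N^{-1+\nu}$ is an auxiliary scale with $\nu$ taken as small as we like. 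This comes from running the local law (Theorem~\ref{thm: local law}) and eigenvector delocalization (Corollary~\ref{cor: regidity}) at scale $\wt\eta$ --- both valid for $H_s$ at every $s$, since $H_s$ has the same correlation structure and moment bounds --- and splitting the spectral representation $\im G(z)_{bb}=\sum_\ell\eta\absa{u_\ell(b)}^2\big((\la_\ell-E)^2+\eta^2\big)^{-1}$ according to whether $\absa{\la_\ell-E}\leq\wt\eta$ or not: in the first range there are $\O{N\wt\eta}$ eigenvalues by rigidity and $\absa{u_\ell(b)}^2\leq N^{-1+\sigma}$, giving a contribution $\lesssim\eta^{-1}N\wt\eta N^{-1+\sigma}=N^{\nu+\sigma+\delta}$; in the second range $\tfrac{\eta}{(\la_\ell-E)^2+\eta^2}\leq\tfrac{2\wt\eta}{(\la_\ell-E)^2+\wt\eta^2}$ because $\eta\leq\wt\eta$, so this part is $\leq 2\im G(E+\ii\wt\eta)_{bb}=\O{1}$ by the local law at scale $\wt\eta$. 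The Ward identity then bounds $\sum_c\absa{G(z)_{bc}}^2=\im G(z)_{bb}/\eta$, which via Cauchy--Schwarz bounds all entries of products of up to $k+3$ resolvents; and for the trace itself one uses that the $G(z_j)$ commute, $\tfrac{1}{N^{k}}\tr\prod_j G(z_j)=\tfrac{1}{N^{k}}\sum_\ell\prod_j(\la_\ell-z_j)^{-1}$, together with $\prod_j\absa{\la_\ell-z_j}^{-1}\leq\eta^{-(k-1)}\absa{\la_\ell-z_1}^{-1}$ and $\tfrac{1}{N}\sum_\ell\absa{\la_\ell-E-\ii\eta}^{-1}\lesssim\wt\eta/\eta+\log N\lesssim N^{\nu+\delta}$, to conclude $\absa{\tfrac{1}{N^k}\tr\prod_j G(z_j)}\lesssim N^{C\delta}$ (for $\nu,\sigma$ small), and similarly for its first, second and third derivatives in $h$. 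Hence the arguments of $f$ lie in $\{\max_r\absa{x_r}\leq N^{C\delta}\}$, so $\absa{\partial^\alpha f}\leq N^{C_0C\delta}$ and the chain rule gives $\norma{D^3\mathcal Q(H_s)}\leq N^{c\delta}$.

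Finally one must replace $H_s$ by the interpolant $\theta H_s$ that actually appears; this is the only genuinely delicate point. The difference $\theta H_s-H_s$ is a Hermitian matrix supported on the $(4K+1)\times(4K+1)$ block of entries indexed by $\T_{ij}$, hence of rank $\O{K^2}=\O{1}$, and on a high-probability event every entry of $H_s$ is $\leq N^{-\tau/2+\e}$ (by \eqref{eq: moments} and a union bound), so $\norma{\theta H_s-H_s}\leq N^{-c'}$ uniformly in $\theta$. Eigenvalue interlacing for finite-rank perturbations then shows that the eigenvalue counting function of $\theta H_s$ differs from that of $H_s$ by $\O{1}$ on every interval, so the rigidity input at scale $\wt\eta$ survives (the additive $\O{1}$ being negligible against $N\wt\eta$); and $\im G(z)(\theta H_s)_{bb}$ is controlled by iterating the resolvent identity $G(z)(\theta H_s)=G(z)(H_s)+G(z)(H_s)(H_s-\theta H_s)G(z)(\theta H_s)$, using the bounds already obtained for $G(z)(H_s)$ and the crude bound $\norma{G(z)(\theta H_s)}\leq\eta^{-1}$ only to absorb the small perturbation $H_s-\theta H_s$. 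Thus all estimates of the previous paragraph hold for $\theta H_s$ with $c$ replaced by a slightly larger constant, uniformly in $\theta$. Substituting $\norma{D^3\mathcal Q(\theta H_s)}\lesssim N^{c\delta}$ back into the third-order remainder, summing over the $\O{N^2}$ choices of $(i,j)$ and the $\O{K^4}$ choices of $(i',j'),(i'',j'')$, and taking expectations gives a per-unit-time bound $\lesssim N^{2}N^{c\delta}(N^{-1}q^{-1/2}+N^{-3/2})\lesssim N^{1+c\delta}q^{-1/2}$; adding the negligible exceptional-event contribution and integrating over $[0,t]$ yields $\absa{\E{\mathcal Q(H_t)}-\E{\mathcal Q(H_0)}}\leq CtN^{1+c\delta}q^{-1/2}$. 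The main obstacle is exactly the two fine-scale estimates just sketched: descending from the local-law scale $N^{-1+\nu}$ to $\eta\sim N^{-1-\delta}$ via rigidity and delocalization, and transferring the resulting bound across the rank-$\O{1}$ perturbation $\theta H_s\mapsto H_s$.
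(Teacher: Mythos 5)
Your proof has the same skeleton as the paper's: both reduce the lemma to the It\^o-plus-decoupling expansion of Lemma \ref{lem: comparison} (the cancellation of the drift against the second-order term in \eqref{eq: ibp}), bound the resulting third-order remainder by $N^2\cdot N^{c\delta}\cdot(N^{-1}q^{-1/2}+N^{-3/2})$ on a good event, and kill the bad event with a crude deterministic bound and a large choice of $p$. Where you differ is in how the key input $\norma{D^3\mathcal Q(\theta H_s)}\leq N^{c\delta}$ at the sub-microscopic scale $\eta\in[N^{-1-\delta},N^{-1}]$ is obtained. The paper gets it in one line from the monotonicity inequality $\absa{\partial_\eta\Ga}\leq\Ga/\eta$, which gives $\Ga(E+\ii\eta)\leq N^{2\delta}\,\Ga(E+\ii N^{-1+\delta})$ and then invokes the local law at the larger scale; you instead descend via the spectral decomposition of $\im G_{bb}$, using delocalization (Corollary \ref{cor: regidity}) and an eigenvalue count in windows of width $\wt\eta=N^{-1+\nu}$. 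Your route is longer but standard and correct (note only that the counting bound you attribute to ``rigidity'' is not proved as such in the paper; it follows directly from $\im\tfrac1N\tr G(E+\ii\wt\eta)=\O{1}$, so nothing is missing). For the trace itself your eigenvalue-sum bound and the Ward-identity control of entries of resolvent products are fine.

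The one step that needs more care is the uniformity in the interpolation parameter $\theta$. The paper sidesteps the issue by asserting that the local law itself holds for $G_\theta$ (the self-consistent equation is insensitive to modifying $\O{1}$ entries), and then applies the monotonicity trick directly to $G_\theta$. You instead perturb the resolvent of $H_s$, and your stated mechanism --- absorbing $H_s-\theta H_s$ using the operator-norm bound $\norma{G(\theta H_s)}\leq\eta^{-1}$ --- does not close as written: $\eta^{-1}\norma{H_s-\theta H_s}\sim N^{1+\delta-\tau/2}\gg1$, so the Neumann series does not converge in operator norm. The repair is to exploit that the perturbation is supported on $\O{K}$ rows and columns and to run the resolvent expansion entry-wise on that block: with $\Ga_\theta:=\max\absa{G(\theta H_s)_{ab}}$ one gets the self-consistent bound $\Ga_\theta\leq\Ga+\O{K^2}\,\Ga\,N^{-\tau/2+\e}\,\Ga_\theta$, which closes since $\Ga\leq N^{C\delta}\ll N^{\tau/2-\e}$; the operator-norm bound $\eta^{-1}$ should not enter at all. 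With that substitution (or by following the paper and proving the local law for $\theta H_s$ directly), your argument is complete.
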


\begin{proof}
We consider only the $n=1$, $k_1=1$ case for simplicity, the general case can be handled likewise.  We want to show that 
\ben
	\absa{\E{f\left( \tfrac{1}{N} \tr G_t(z) \right)} -\E{f\left( \tfrac{1}{N} \tr G_0(z) \right)}}\leq Ct N^{1+c\delta}q^{-1/2}\,.
\ee
In order to apply \eqref{eq: ibp}, we need to bound the derivatives of $\tfrac{1}{N}\tr G_\theta$ with respect to the entries of $H_\theta=N^{-1/2}\theta X$, here $G_\theta=(N^{-1/2}\theta X-z)^{-1}$, $\theta\in[0,1]^{\T_{ij}}$ and $(i,j)\in\N^2$. Note that for $\absa{\alpha}\leq 3$, by direct calculation,
\be\label{eq: off diag bd}
	\absa{\partial^\alpha \tfrac{1}{N} \tr G }\leq \Ga^4\,.
\ee
Note that $\absa{\frac{\pt \Ga}{\pt \eta}}\leq \frac{\Ga}{\eta}$, thus $\Ga(E+i\eta)\leq \Ga(E+iN^{-1+\delta})N^{2\delta}$, where $\Ga(E+iN^{-1+\delta})$ can be bounded using the local law Theorem \ref{thm: local law}.  Therefore, 
\be\label{eq: d tr}
	\absa{\partial^\alpha \tfrac{1}{N} \tr G }\leq CN^{12\delta}\,,
\ee
 on an event with probability $1-N^{-D}$ for some large $D$.  Outside this event, we have a crude bound $\absa{\partial^\alpha \tfrac{1}{N} \tr G }\leq C N^{8}$. 

It is not hard to show that Theorem \ref{thm: local law} holds for $G_\theta$ uniformly for $\theta\in\T_{ij}$ and all $(i,j)\in\N^2$ by a continuity argument.  Therefore the estimate \eqref{eq: d tr} holds uniformly for $\theta\in\T_{ij}$ and $(i,j)\in\N^2$ with some larger $D>0$.

Finally, we apply Lemma \ref{lem: comparison} (or rather, the estimate \eqref{eq: ibp}) to complete the proof. 
\end{proof}

Lemma \ref{lem: comparison 2} enables us to approximate test functions on the microscopic scale by a standard argument. We have the following comparison theorem.  For the proof we refer the readers to Theorem 6.4 in \cite{Erdos2012}\,.  
\begin{theorem}\label{thm: comparison}
Suppose the assumptions of Theorem \ref{thm: local law} hold. Let $t=N^{-1+\e}$ for some $\e>0$ small enough. Let $p^{(k)}_{t,N}$ be the $k$-point correlation function of the eigenvalues of $H_t$.  Assume $E\in\R$ such that $\rho$ has a positive finite density on a neighborhood of $E$.  Then, for any compactly support continuous test function $O:\R^k\to\R$ we have for some $c=c(E)>0$,
\ben
	\int_{\R^k}\dd \alpha_1\dots\dd\alpha_k \left(p_{0,N}^{(k)}-p_{t,N}^{(k)}\right)(E+\alpha_1/N,\dots,E+\alpha_k/N)=\O{N^{-c}}\,.
\ee
\end{theorem}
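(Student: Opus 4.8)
The plan is to deduce the theorem from the Green's function comparison estimate of Lemma \ref{lem: comparison 2} by the standard reduction of correlation functions to products of resolvent traces, following Theorem 6.4 of \cite{Erdos2012}; I shall only lay out the skeleton and indicate where the local law of this paper enters. The two ingredients are: (a) an identity expressing the smeared $k$-point correlation function of $H_s$, for $s\in\{0,t\}$, as an expectation $\E{f\big(\tfrac1N\tr G_s(z_1),\dots,\tfrac1N\tr G_s(z_M)\big)}$ of a smooth function of resolvent traces over a finite grid $z_1,\dots,z_M$ of spectral parameters, up to a polynomially small error; and (b) Lemma \ref{lem: comparison 2}, which controls the change of such an expectation from $s=t$ to $s=0$.

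For (a) I would fix a small $\delta>0$, set $\eta=N^{-1-\delta}$, and use the smoothing identity
\ben
	\frac{1}{N\pi}\im\tr G_s(E'+\ii\eta)=\frac{1}{N}\sum_i\frac{1}{\pi}\frac{\eta}{(\lambda_i(H_s)-E')^2+\eta^2}\,.
\ee
Taking the product of $k$ such quantities at the shifted energies $E'=E+\alpha_j/N$ and integrating against $O(\alpha)$ recovers $\int_{\R^k}O(\alpha)\,p^{(k)}_{s,N}(E+\alpha_1/N,\dots,E+\alpha_k/N)\,\dd\alpha$ up to two errors: that from replacing $\eta=0$ by $\eta=N^{-1-\delta}$, and the lower order ``coincidence'' terms in which two or more of the summation indices collapse, of size $\O{N^{-1+C\delta}}$. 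Both are controlled by eigenvalue rigidity and by the a priori bound $\tfrac1N\im\tr G_s(E'+\ii\eta)\le N^{C\delta}$ on an event of probability $1-N^{-D}$; these in turn follow from Theorem \ref{thm: local law} applied at scale $N^{-1+\delta}$ together with the deterministic monotonicity of $\eta\mapsto\eta\,\im\tr G_s(E'+\ii\eta)$, exactly as in the proof of Lemma \ref{lem: comparison 2}. Writing $\im\tr G=\tfrac1{2\ii}(\tr G(z)-\tr G(\bar z))$ turns each imaginary part into a combination of traces with both signs of $\im z$, which is why Lemma \ref{lem: comparison 2} allows an arbitrary choice of those signs. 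The resulting function $f$ of the grid values then satisfies exactly the two-regime derivative bounds required there: $\le N^{C_0\kappa}$ on $\{\max_j|x_j|\le N^\kappa\}$ from the high-probability event, and $\le N^{C_0}$ on $\{\max_j|x_j|\le N^2\}$ from the crude deterministic bound $|\tfrac1N\tr G_s|\le\eta^{-1}\le N^{1+\delta}$.

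Given (a) and (b), I would apply Lemma \ref{lem: comparison 2} with $t=N^{-1+\e}$: recalling $q=N^{\tau}$, the difference of the two expectations is $\O{tN^{1+c\delta}q^{-1/2}}=\O{N^{\e+c\delta-\tau/2}}$, which is $\O{N^{-c'}}$ for some $c'>0$ once $\e$ and then $\delta$ are taken small enough that $\e+c\delta<\tau/2$; combining with the $\O{N^{-c}}$ errors of the reduction at $s=0$ and $s=t$ yields the claim. The general $k$, and the more elaborate observables built from products of several resolvents that appear in Lemma \ref{lem: comparison 2}, are handled in the same way. The main obstacle lies entirely in the reduction step: the careful bookkeeping of the coincidence terms and, above all, the need to work at the sub-spacing scale $\eta=N^{-1-\delta}$, which is finer than the range $\eta>N^{-1+\nu}$ covered by Theorem \ref{thm: local law}. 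That gap is bridged, as in Lemma \ref{lem: comparison 2}, by the monotonicity bound $\Ga(E'+\ii\eta)\le N^{2\delta}\,\Ga(E'+\ii N^{-1+\delta})$, so no stronger local law is needed.
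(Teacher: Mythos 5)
Your proposal is correct and follows essentially the same route as the paper, which simply invokes the standard reduction of correlation functions to expectations of smooth functions of resolvent traces at scale $\eta=N^{-1-\delta}$ (citing Theorem 6.4 of \cite{Erdos2012}) and then applies the Green's function comparison Lemma \ref{lem: comparison 2}. Your reconstruction of that standard argument, including the treatment of the coincidence terms, the two-regime derivative bounds, and the monotonicity bridge below the scale of the local law, matches the intended proof.
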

It remains to show that the local statistics of $H_t$ in the bulk agrees with that of the GOE.  A important observation is that $\dd B$ can be decomposed
\ben
	\dd B_{ij}=\dd B^1_{ij}+\dd B_{ij}^2\,,
\ee
such that $B^1$ and $B^2$ are independent and $\dd B_{ij}^2$ is a GOE scaled by a small constant.  Therefore, when $t\geq N^{-1+\e}$, we can write $x_{ij}(t)=\tilde x_{ij}(t) + w_{ij}$, where $(w_{ij})$ are i.i.d. Gaussian with variance $N^{-1+\e}$  and $(\tilde x_{ij}(t))$ is a family of random variables that have a positive definite correlation structure $\tilde \xi$ which is positive definite in the sense of Definition \ref{def: pd} with lower bound $c_0-o(1)$.  It is tedious but easy to show that the local law holds for $\tilde X(t)=(\tilde x_{ij}(t))$. Denote $\tilde H_t:=N^{-1/2}\tilde X(t)$ and $\tilde G:=(\tilde H_t-z)^{-1}$.  The local law implies that $\im \tfrac{1}{N}\tr \tilde G(z)$ is bounded above and bounded below for $z=E+iN^{-1+\e}$ where $E$ is in the bulk of the limiting spectrum.

\subsection{Comparison with GOE}
Theorem \ref{thm: bulk universality} is a corollary of Theorem \ref{thm: comparison} above and Theorem 2.4 in \cite{Landon2015a}.  Here we remark that the result in \cite{Schnelli15} also applies to our case.
\begin{proof}[Proof of Theorem \ref{thm: bulk universality}]
Let $\D=\{\zeta: \re\zeta\in[E-a,E+a], \im\zeta \in[0,1]\}$ for some small constant $a>0$.  Therefore, $\im \tr m(z)$ is bounded below on $\D$ when $a$ is small enough.  Set $t=N^{-1+\e}$ with $\e>0$ small enough.  As we have seen in the last subsection, with probability $1-N^{-\sigma p}$, $\im \tfrac{1}{N}\tr \tilde G$ is bounded above and bounded below on 
\ben
	\D_\nu\wo{N}:=\{\zeta \in \D: \im\zeta>N^{-1+\nu}\}\,.
\ee
Consequently, $\tilde H(t)$ satisfies the regularity condition in Theorem 2.4 of \cite{Landon2015a}. Therefore, the $k$-point correlation function of $H_t$ near $E$ converges to that of the GOE in the sense of Theorem 2.4 of \cite{Landon2015a}.  The conclusion of Theorem \ref{thm: bulk universality} follows in view of Theorem \ref{thm: comparison}.
\end{proof}

\textbf{Acknowledgements.} Part of the work was done during the author's stay at NCTS (National Center of Theoretical Sciences), Taipei. The author would like to thank NCTS for their hospitality and help. The author also would like to thank Benjamin Landon for his help on proofreading the article and improving presentations in many parts of the article.  The author would like to thank Prof. H.-T. Yau for helpful comments on this article.

\bibliographystyle{abbrv}
\bibliography{randommatrix.bib}
\end{document}